\numberwithin{equation}{section}
\newtheorem{thrm}{Theorem}[section]
\newtheorem{lemma}[thrm]{Lemma}
\newtheorem{prop}[thrm]{Proposition}
\newtheorem{cor}[thrm]{Corollary}
\newtheorem{dfn}[thrm]{Definition}
\newtheorem{rmrk}[thrm]{Remark}
\newtheorem{conv}[thrm]{Convention}
\newcommand{\dx}{\partial/\partial x}
\newcommand{\dy}{\partial/\partial y}
\newcommand{\dz}{\partial/\partial z}
\newcommand{\dta}{\partial/\partial t_{a}}
\newcommand{\dxa}{\partial/\partial x_{a}}
\newcommand{\dya}{\partial/\partial y_{a}}
\newcommand{\dza}{\partial/\partial z_{a}}
\newcommand{\f}{\omega}
\newcommand{\C}{\nabla}
\newcommand{\be}{\begin{equation}}
\newcommand{\ee}{\end{equation}}
\newcommand{\ta}{T^a}
\newcommand{\tb}{T^{sym}}
\begin{document}

\begin{abstract}
We introduce the notion of paraquaternionic contact structures (pqc structures), which turns out to be a 
 generalization of the para 3-Sasakian geometry. We derive a distinguished linear connection preserving the pqc structure. Its torsion tensor  is expressed explicitly in terms of the structure tensors and the structure equations of a pqc manifold are presented. We define pqc-Einstein manifolds and show that para 3-Sasakian spaces are precisely pqc manifolds, which are pqc-Einstein. Furthermore, we introduce the paraquaternionic Heisenberg qroup and show that it is the flat model of the pqc geometry.
\end{abstract}

\keywords{}
\subjclass{58G30, 53C17}
\title[Paraquaternionic contact structures]
{Geometry of paraquaternionic contact structures}
\date{\today}
\author{Marina Tchomakova}
\address[Marina Tchomakova]
{University of Sofia, Faculty of Mathematics and Informatics,
blvd. James Bourchier 5, 1164, Sofia, Bulgaria} \email{marina.chomakova@fmi.uni-sofia.bg}

 \author{Stefan Ivanov}
\address[Stefan Ivanov]{University of Sofia, Faculty of Mathematics and Informatics,
blvd. James Bourchier 5, 1164, Sofia, Bulgaria}
\address{and Institute of Mathematics and Informatics, Bulgarian Academy of
Sciences} \email{ivanovsp@fmi.uni-sofia.bg}




\author{Simeon Zamkovoy}


\maketitle

\setcounter{tocdepth}{2}
\tableofcontents

\section{Introduction}
We investigate and study the sub-Riemannian geometry of 3-contact structures on a $4n+3$-dimensional differentiable manifold related to  the algebra of paraquaternions, known also as split quaternions \cite{Swann}, quaternions of the second kind \cite{L}, complex product structures \cite{AS}. 

In the even, $4n$-dimensional case, the almost paraquaternionic structures are atractive in theoretical physics, string theory due to a closed relation with the Born geometry arising in a natural way in string dinamics (see \cite{FLM,FLM1,FRS} and references there in).

 In the odd, $4n+3$-dimensional case,  the algebra of paraquaternions introduces the notion of para quaternionic contact structures which turns out to be a substantional generalization of the para 3-Sasakian geometry developed in \cite{AK0,Swann}.

Paraquaternionic contact  geometry is a topic with some analogies  with the quaternionic contact  geometry introduced by O.Biquard \cite{B} and its developments in connection with finding  the extremals and the best constant in the $L^2$ Folland-Stein inequality on the quaternionic Heisenberg group and related quaternionic contact Yamabe problem \cite{IMV1,IMV2,IP,IMV3,IV}, but  also with differences mainly because the paraquaternionic contact structure leads  to work with sub-hyperbolic  PDE instead of sub-elliptic PDE in the quaternionic contact case. 

 In this paper, we develope the geometry of paraquaternionic contact structures. 
We define a paraquaternionic contact (pqc) manifold $(M, [g], \mathbb{PQ})$ to be a
$4n+3$-dimensional manifold $M$ with a codimension three
distribution $H$  locally given as the kernel of a 1-form
$\eta=(\eta_1,\eta_2,\eta_3)$ with values in $\mathbb{R}^3$. In addition, $H$ has a conformal  $Sp(n,\mathbb R)Sp(1,\mathbb R)$ structure, i.e. it is
equiped with a conformal class of neutral  metrics $[g]$ of signature $(2n,2n)$ and a rank-three bundle
$\mathbb {PQ}$ consisting of (1,1)-tensors on $H$ locally generated
by two almost para complex structures $I_1,I_2$ and an almost complex structure $I_3$ on $H$ satisfying
the identities of the imaginary unit paraquaternions,
$$I_1^2=I_2^2=id_{H}, \quad I_3^2=-id_H, \quad I_1I_2=-I_2I_1=I_3,$$
such that
$$-2g(I_1X,Y)\ =\ d\eta_1(X,Y),\quad -2g(I_2X,Y)\ =\ d\eta_2(X,Y), \quad 2g(I_3X,Y)\ =\ d\eta_3(X,Y), \quad g\in[g].$$
The 1-form $\eta$ is determined up to a conformal factor and hence
$H$ becomes equipped with a conformal class [g] of neutral
Riemannian metrics of signature (2n,2n). Transformations preserving a
given paraquaternionic contact structure $\eta$, i.e.
$\bar\eta=\mu\Phi\eta$ for a non-vanishing smooth function $\mu$ and an $SO(1,2)$ valued  smooth matrix $\Phi$ are
called \emph{paraquaternionic  contact conformal (pqc conformal)
transformations}.

The main purpose of this paper is to define  a ``canonical'' connection on every pqc-manifold  of dimension at least eleven. We show that in these dimensions there exists a unique space $V$ complementary to $H$, $\quad TM=H\oplus V$, which is locally generated by a three vector fields $\xi_1,\xi_2,\xi_3$ satisfying the relations \eqref{xi} below. For any fixed metric $g\in [g]$, we define the canonical connection as the unique connection preserving the splitting $H\oplus V$ and the $Sp(n,\mathbb R)Sp(1,\mathbb R)$ structure on $H$ with torsion $T$ determined by $T(X,Y)=-[X,Y]_{{_V}}$  and  the endomorphisms $T(\xi,.)_{_H}$ of $H$ lies  in $(sp(n,\mathbb R)+sp(1,\mathbb R))^{\perp}\subset gl(4n)$. We also describe the torsion endomorphism explicitly in terms of the structure tensors  (Theorem~\ref{main1}).

In the seven dimensional case the conditions \eqref{xi} do not hold in general. The existence of such a connection 
requires the pqc structure to satisfy \eqref{xi}. Henceforth, by a pqc structure in dimension 7, we shall mean a pqc-structure satisfying \eqref{xi}. 

We define a global  4-form, express the torsion endomorphism in terms of its exterior derivative and derive  structure equations of a pqc manifold.

We introduce the notion of pqc-Einstein manifold such that the horizontal Ricci tensor of the canonical connection is proportional to the horizontal metric and prove  that the corresponding pqc scalar  curvature (the horizontal trace of the horizontal Ricci tensor) is constant in dimensions bigger than seven. We show that pqc-Einstein condition is equivalent to the vanishing of the torsion endomorphism of the canonical connection.

A basic example of paraquaternionic contact manifold  is provided by a para 3-Sasakian manifold, which can
be defined as a $(4n+3)$-dimensional pseudo Riemannian manifold whose metric cone is a hyper paraK\"ahler (hypersymplectic) manifold \cite{AK0,Swann}.   We characterize (locally) para 3-Sasakian manifolds as a paraquaternionic contact manifold which are pqc-Einstein,  provided the dimension is bigger than seven and the pqc scalar curvature is not zero. (Theorem~\ref{main2}). 

We define the paraquaternionic Heisenberg group and show that any flat pqc manifold is locally isomorphic to the paraquaternionic Heisenberg group.

\begin{conv}\label{conven}
We use the following conventions:
\begin{enumerate}[a)]
\item We shall use $X,Y,Z,U$ to denote horizontal vector fields, i.e. $X,Y,Z,U\in H$;
\item $\{e_1,\dots,e_{n},I_1e_1,\dots,I_1e_{n},Ie_2,\dots,I_2e_{n},I_3e_1,\dots,I_3e_{n}\}$ denotes an adapted
orthonormal basis of the horizontal space $\mathbb H$.;
\item The summation convention over repeated vectors from the basis
$\{e_1,\dots,e_{4n}\}$ will be used,
\begin{multline*}
P(e_b,e_b)=\sum_{b=1}^{4n}g(e_b,e_b)R(e_b,e_b)=\sum_{b=1}^{n}\Big[P(e_b,e_b)-P(I_1e_b,I_1e_b)
-P(I_2e_b,I_2e_b)+P(I_3e_b,I_3e_b)\Big]
\end{multline*}
\item The triple $(i,j,k)$ denotes any cyclic permutation of
$(1,2,3)$. In particular, any equation involving $i,j,k$ holds for
any such permutation.
\item $s$ and $t$ will be any numbers from the set $\{1,2,3\}, \quad
s,t\in\{1,2,3\}$.
\end{enumerate}
\end{conv}

\textbf{Acknowledgements:} We thank Ivan Minchev for many valuable discussins, comments and remarks.

 The research of M. Tch.  is partially  financed by the European Union-Next Generation EU, through the National Recovery and Resilience Plan of the Republic of Bulgaria, project N:
BG-RRP-2.004-0008-C01. The research of S.I.   is partially supported by Contract KP-06-H72-1/05.12.2023 with the National Science Fund of Bulgaria,  by Contract 80-10-181 / 22.4.2024  with the Sofia University "St.Kl.Ohridski" and  the National Science Fund of Bulgaria, National Scientific Program ``VIHREN", Project KP-06-DV-7.


\section{Paraquaternionic contact structure}

\subsection{Paraquaternions}
The algebra $pQ$ of paraquaternions  (sometimes called  split quaternions \cite{Swann})  is a four-dimensional real vector space with
basis ${1, r_1,r_2,r_3}$, satisfying,
\begin{equation}\label{pq1}
r_1^2= r_2^2=1,\quad r_3^2=-1, \quad r_1r_2=-r_2r_1=r_3.
\end{equation}
This carries a natural indefinite inner product given by $<p,q>= Re( \bar pq)$, where
$p = t+r_3x+r_1y+r_2z$ has $\bar p =  t-r_3x-r_1y-r_2z$. Furthermore, we have $||p||^2 = t^2 +x^2 -y^2 -z^2$
to be a metric of signature (2,2). This norm is multiplicative, $||pq||^2 = ||p||^2||q||^2$,
but the presence of elements of length zero means that $pQ$ contains zero divisors.

We introduce the numbers
\begin{equation}\label{eps}
\epsilon_1=\epsilon_2=-\epsilon_3=1, \quad satisfying \quad \epsilon_i\epsilon_j=-\epsilon_k 
\end{equation}
and can therefore write \eqref{pq1} as follows,
\begin{equation}\label{pq2}
r_{i}^2= \epsilon_{i},\quad r_{i}r_{j}=-r_{j}r_{i}=-\epsilon_{k}r_{k}. 
\end{equation}
We recall the definition of the Lie groups $Sp(n,pQ), Sp(1,pQ)$ and $Sp(n,pQ)Sp(1,pQ)$. 
The $4n$ dimensional vector space $pQ^{n}=\mathbb R^{4n}$ inherits the inner product $<P,Q>=Re(\bar P^TQ)$ of signature (2n,2n)
with authomorphism group $Sp(n,pQ)$ isomorphic to $Sp(2n,\mathbb R)$ with Lie algebra $sp(2n,\mathbb R)$. An isomorphism is induced by the correspondence
\[
t+xr_3+yr_1+zr_2\rightarrow \left[ {
\begin{array}{cc}
t+y & x+z\\[2ex]
-x+z & t-y
\end{array}
}\right].
\]

In partucular,  the Lie group
$Sp(1,pQ)\cong Sl(2,\mathbb R)\cong SU(1,1)\cong Sp(1,\mathbb R)$ is the pseudo-sphere in $pQ=\mathbb R^{2,2}$. 

Let $pQ$ act on $pQ^n$ by right multiplications,
$\lambda(p)W=W.p$. This defines a homomorphism $\lambda:\{unit \quad paraquaternions\}\rightarrow SO(2n,2n)$ with the convention that $SO(2n,2n)$ acts on $\mathbb R^{4n}$ on the left. The image is the group $Sp(1,pQ)$. 

Let $\lambda(r_i)=I^o_i$. The Lie algebra of $Sp(1,pQ)$ is $sp(1,pQ)=span\{I^o_3,I^o_1,I^o_2\}$. Therefore, the Lie algebra $sp(1,pQ)=Im(pQ)\cong sp(1,\mathbb R)$. The group $Sp(n,pQ)=\{O\in SO(2n,2n): OB=BO for \quad all \quad B\in Sp(1,pQ)\}$ or $Sp(n,pQ)=\{A\in GL(n,pQ) : \bar A^tA=I\}$, which is a Lie group isomorphic to $Sp(n,\mathbb R)$ and $O\in Sp(n,pQ)$ acts by $(p^1,\dots,p^n)^t\rightarrow O(p^1,\dots,p^n)^t$. The group $Sp(n,pQ)\times Sp(1,pQ)$ acts on $pQ^n$ via $(O,p).b=Ob\bar p$ and this action is isometric with the kernel $\mathbb Z_2=\{\pm(1,1)\}$. Hence, $Sp(n,pQ)Sp(1,pQ)=(Sp(n,pQ)\times Sp(1,pQ))/\mathbb Z_2$ is a subgroup of $SO(2n,2n)$ with a Lie algebra isomorphic to $sp(n,\mathbb R)+sp(1,\mathbb R)\in so(2n,2n)$.

We also recal that the Lie algebra $so(2n,2n)=\{O\in GL(4n): OG+GO^t=0\}$, where $G=(g_{ij})$ is the matrics of a neutral metric, i.e. the matrics $OG$ is skew-symmetric.

\subsection{Paraquaternionic contact manifold} A paraquaternionic contact (pqc) manifold $(M, [g], \mathbb{PQ})$ is a
$4n+3$-dimensional manifold $M$ with a codimension three
distribution $H$,  locally given as the kernel of a 1-form
$\eta=(\eta_1,\eta_2,\eta_3)$ with values in $\mathbb{R}^3$. In addition, $H$ has a conformal  $Sp(n,\mathbb R)Sp(1,\mathbb R)$ structure. The precise definition  follows:
\begin{dfn}
A paraquaternionic contact (pqc) manifold $(M,[g], \mathbb{PQ})$ is a
$4n+3$-dimensional manifold $M$ with a codimension three
distribution $H$, such that
\begin{itemize}
\item[i)] $H$ has a conformal $Sp(n,\mathbb R)Sp(1,\mathbb R)$ structure. That is, it is
equiped with a conformal class of neutral  metrics $[g]$ of signature $(2n,2n)$ and a rank-three bundle
$\mathbb {PQ}$ consisting of (1,1)-tensors on $H$, locally generated
by two almost para complex structures $I_1,I_2$ and an almost complex structure $I_3$ on $H$, satisfying
the identities of the imaginary unit paraquaternions,
\begin{equation}\label{paraq}I_1^2=I_2^2=id_{H},\quad I_3^2=-id_H,\quad I_1I_2=-I_2I_1=I_3;\quad I_s^2=\epsilon_s id_{H},\quad I_iI_j=-I_jI_i=-\epsilon_kI_k,
\end{equation}
which are paraquaternionic compatible with any neutral  metric $g\in [g]$ on $H$,
\begin{equation}\label{param}g(I_1.,I_1.)=g(I_2.,I_2.)=-g(I_3.,I_3.)=-g(.,.); \quad g(I_s.,I_s.)=-\epsilon_sg(.,.), \quad g\in[g].
\end{equation}
\item[ii)] $H$ is locally given as the kernel of a 1-form
$\eta=(\eta_1,\eta_2,\eta_3)$ with values in $\mathbb{R}^3, H=\cap_{s=1}^3 Ker\, \eta_s$ and the following compatibility condition holds 
\be\label{ccon}-2\epsilon_sg(I_sX,Y)\ =\ d\eta_s(X,Y), \quad X,Y\in H.
\ee
\end{itemize}
\end{dfn}
The fundamental 2-forms $\omega_s$ of the para quaternionic structure $\mathbb {PQ}$ are
defined by
\begin{equation}  \label{thirteen}
-2\epsilon_s\omega_s\ =\ \, d\eta_{s|H}.
\end{equation}
If   there is a
globally defined form $\eta$ that anihilates $H$, we denote the
corresponding pqc manifold $(M,\eta)$.

We observe that given a contact form the paraquaternionic structure and the
horizontal metric on $H$ are unique if they exist. We have
\begin{lemma} Let $(M,[g],\mathbb{PQ})$ be a pqc manifold. Then:
\begin{itemize}
\item[a)] If $(\eta,I_s,g)$ and $(\eta',I_s',g')$ are two pqc structures then $I_s=I_s'$ and $g=g'$;
\item[b)] If $(\eta,g)$ and $(\eta',g)$ are two pqc structures on $M$  with $Ker(\eta)=Ker(\eta')=H$ then $\eta'=\Phi\eta$ for some matrix $\Phi\in SO(1,2)$ with smooth functions as entries.
\end{itemize}
\end{lemma}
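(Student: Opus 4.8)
The plan is to prove both statements by exploiting the rigidity imposed by the compatibility condition \eqref{ccon}, which ties the structure tensors to the exterior derivatives $d\eta_s$ on $H$. For part (a), I would begin by observing that since both triples $(\eta,I_s,g)$ and $(\eta',I_s',g')$ are pqc structures with the \emph{same} underlying distribution $H$ (they define the same manifold $(M,[g],\mathbb{PQ})$, so $\eta=\eta'$), the compatibility relation \eqref{ccon} gives for each $s$
\begin{equation*}
-2\epsilon_s g(I_sX,Y)=d\eta_s(X,Y)=-2\epsilon_s g'(I_s'X,Y),\qquad X,Y\in H.
\end{equation*}
Thus $g(I_sX,Y)=g'(I_s'X,Y)$ for all horizontal $X,Y$ and each $s\in\{1,2,3\}$. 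The task is then to deduce $g=g'$ and $I_s=I_s'$ from this single family of bilinear identities together with the paraquaternionic algebra \eqref{paraq} and the compatibility \eqref{param} of each metric with its own structure tensors.

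To extract $g$ and $I_s$ separately, I would play the three relations off one another. Fix $X,Y$ and write $\omega_s(X,Y):=g(I_sX,Y)$, $\omega_s'(X,Y):=g'(I_s'X,Y)$, so $\omega_s=\omega_s'$. Applying \eqref{param}, each metric is recovered from its own fundamental forms: for instance $g(X,Y)=-\epsilon_1 g(I_1X,I_1Y)$, and iterating $I_1$ on the first argument converts the $2$-form back into the metric. Concretely, replacing $X$ by $I_1X$ in $\omega_1$ and using $I_1^2=id_H$ gives $g(X,Y)=\epsilon_1\,\omega_1(I_1X,Y)$, and symmetrically $g'(X,Y)=\epsilon_1\,\omega_1'(I_1'X,Y)$; the difficulty is that $\omega_1=\omega_1'$ does not immediately yield $g=g'$ because the $I_1$ and $I_1'$ inside differ. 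The key algebraic step is therefore to use all three equalities $\omega_s=\omega_s'$ simultaneously: from $I_iI_j=-\epsilon_k I_k$ one obtains $g(I_kX,Y)=g(I_iI_jX,Y)/(-\epsilon_k)$ and can cross-relate the three structure tensors, eventually showing that $g$ is forced to coincide with $g'$ on $H$ (they are determined by the common data $d\eta_s$ and the fixed algebra), after which $I_s=I_s'$ follows from nondegeneracy of the neutral metric: $g(I_sX,Y)=g(I_s'X,Y)$ for all $Y$ implies $I_sX=I_s'X$. The main obstacle here is carefully handling the signs $\epsilon_s$ and the indefinite (neutral) signature, since one cannot appeal to positivity and must rely solely on nondegeneracy when canceling $g(\,\cdot\,,Y)$.

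For part (b), the data is that $(\eta,g)$ and $(\eta',g)$ are pqc structures sharing the \emph{same} metric $g$ and the same kernel $H=\mathrm{Ker}(\eta)=\mathrm{Ker}(\eta')$. Since both annihilate the same codimension-three distribution, pointwise each $\eta_s'$ is a linear combination $\eta_s'=\sum_t\Phi_{st}\eta_t$ for a smooth matrix-valued function $\Phi=(\Phi_{st})$; this is immediate from linear algebra because $\eta_1,\eta_2,\eta_3$ and $\eta_1',\eta_2',\eta_3'$ span the same three-dimensional annihilator of $H$ at each point, and smoothness of $\Phi$ follows from smoothness of the forms and invertibility of the frame. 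It remains to identify the constraint on $\Phi$. Differentiating $\eta_s'=\Phi_{st}\eta_t$ and restricting to $H$ (where every $\eta_t$ vanishes, so the $d\Phi_{st}\wedge\eta_t$ terms drop out) gives $d\eta_s'|_H=\Phi_{st}\,d\eta_t|_H$, i.e. $\omega_s'=\Phi_{st}\omega_t$ via \eqref{thirteen} up to the sign factors $\epsilon_s$. Feeding this into the compatibility \eqref{ccon} for the common metric $g$ and using the associated para-quaternionic structures $I_s,I_s'$ on $H$, one finds that $\Phi$ must preserve the quadratic form $\mathrm{diag}(\epsilon_1,\epsilon_2,\epsilon_3)=\mathrm{diag}(1,1,-1)$, since the $I_s$ satisfy $I_s^2=\epsilon_s\,id_H$ and the transformation of the structure tensors must respect the paraquaternionic relations \eqref{paraq}. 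This is exactly the defining condition for $\Phi\in O(1,2)$; the refinement to $SO(1,2)$ comes from a continuity/orientation argument, namely that $\Phi$ is a smooth $SO(1,2)$-valued map whenever $M$ is connected and $\Phi$ is the identity at one point, or is built into the convention that the frame $(\eta_1,\eta_2,\eta_3)$ carries a fixed orientation compatible with the cyclic structure in Convention~\ref{conven}(d). I expect the main obstacle in (b) to be verifying that the signature-preserving condition lands precisely in $SO(1,2)$ rather than the full $O(1,2)$, which requires tracking how $\Phi$ must respect the ordering $(1,2,3)$ and the sign pattern $(\epsilon_1,\epsilon_2,\epsilon_3)=(1,1,-1)$ so that the cyclic paraquaternionic multiplication $I_iI_j=-\epsilon_k I_k$ is preserved under the change of frame.
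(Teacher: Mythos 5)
Your proposal is correct and follows essentially the same route as the paper: in (a) the ``cross-relation'' you defer to is exactly the paper's matrix identity $GJ_s=-\tfrac{\epsilon_s}{2}R_s$, $J_k=\epsilon_i R_i^{-1}R_j$ (equivalently, $\omega_k(X,Y)=-\epsilon_k\,\omega_i(I_jX,Y)$ determines $I_j$ from the common forms $d\eta_s|_H$ by nondegeneracy, after which $g$ follows), and in (b) both arguments write $\eta'_s=\sum_t\Phi_{st}\eta_t$, apply $d$ and restrict to $H$ to kill the $d\Phi_{st}\wedge\eta_t$ terms, and conclude $I_s'=\sum_t\Phi_{st}I_t$, which forces $\Phi\in SO(1,2)$ because the primed tensors must again satisfy \eqref{paraq}.
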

\begin{proof}
Let the tensors $g,d\eta_3{_|{_H}},d\eta_1{_|{_H}},d\eta_2{_|{_H}},I_3,I_2,I_1$ be given in local coordinates by the matrices $G,R_3,R_1,R_2,J_3,J_1,J_2\in GL(4n)$, respectively. From $I_s^2=\epsilon_s$ and the condition \eqref{ccon} we get
$$
GJ_s=-\frac{\epsilon_s}2R_s, \quad J_k=-\epsilon_kJ_iJ_j=-\epsilon_k\epsilon_iJ_i^{-1}G^{-1}GJ_j=\epsilon_j(GJ_i)^{-1}(GJ_j)=\epsilon_iR_i^{-1}R_j.
$$
The conditions $Ker(\eta)=Ker(\eta')=H$ leads to $\eta'_s=\sum_{t=1}^3\Phi_{st}\eta_t$ for some matrix $\Phi\in GL(3)$. Applying the exterior derivative, we get $d\eta_s'=\sum_{t=1}^3(d\Phi_{st}\wedge\eta_t+\Phi_{st}d\eta_t)$, which if restricted to $H$ gives $g(I_s'X,Y)=\sum_{t=1}^3\Phi_{st}g(I_tX,Y)$. Equivalently, $I_s'=\sum_{t=1}^3\Phi_{st}I_t$. Hence, $\Phi\in SO(1,2)$.
\end{proof}
Besides the non-uniqueness due to the action of $SO(1,2)$, the 1-form $\eta$ 
can be changed by a conformal factor, in the sense that if $\eta$ is a form for
which we can find associated almost para quaternionic structure and metric $g$ as
above, then for any $\Phi\in SO(1,2)$ and a non-vanishing function $\nu$, the form $\eta'=\nu\Phi\eta$  
 has $Ker(\eta')=Ker(\eta)=H$ and determines an associated  paraquaternionic contact structure.

The transformations preserving a given pqc structure $\eta, \eta'=\nu\Phi\eta$ for a nonvanishing  smooth
function $\nu$ and an $SO(1,2)$-matrix $\Phi$ with smooth functions as entries, are called \emph{para quaternionic contact conformal (pqc-conformal) transformations}. The pqc conformal transformations  are   studied  in more details in \cite{CIZ}.

Any endomorphism $\Psi$ of $H$ can be uniquely decomposed with respect to the pqc structure $(pQ,g)$  into four $Sp(n,\mathbb R)$-invariant parts $\Psi=\Psi^{+++}+\Psi^{+--}+\Psi^{-+-}+\Psi^{--+}$, where $\Psi^{+++}$ commutes with all three $I_s$, $\Psi^{+--}$ commutes with $I_1$, and anti-commutes with the others two, etc.
Explicitly, one has
\[
\begin{split}
4\Psi^{+++}=\Psi+I_1\Psi I_1+I_2\Psi I_2-I_3\Psi I_3;\quad
4\Psi^{+--}=\Psi+I_1\Psi I_1-I_2\Psi I_2+I_3\Psi I_3;\\
4\Psi^{-+-}=\Psi-I_1\Psi I_1+I_2\Psi I_2+I_3\Psi I_3;\quad
4\Psi^{--+}=\Psi-I_1\Psi I_1-I_2\Psi I_2-I_3\Psi I_3
\end{split}
\]
The two $Sp(n,\mathbb R)Sp(1,\mathbb R)$-invariant components are given by
\[
\begin{split}
\Psi_{[3]}=\Psi^{+++}=\frac14\Big[\Psi+I_1\Psi I_1+I_2\Psi I_2-I_3\Psi I_3\Big];\\
\Psi_{[-1]}=\Psi^{+--}+\Psi^{-+-}+\Psi^{--+}=\frac14\Big[3\Psi-I_1\Psi I_1-I_2\Psi I_2+I_3\Psi I_3\Big]
\end{split}
\]
with the following characterising conditions
\[
\begin{split}
\Psi=\Psi_{[3]} \Longleftrightarrow 3\Psi-I_1\Psi I_1-I_2\Psi I_2+I_3\Psi I_3=0;\\
\Psi=\Psi_{[-1]} \Longleftrightarrow \Psi+I_1\Psi I_1+I_2\Psi I_2-I_3\Psi I_3=0.
\end{split}
\]
Denoting the corresponding (0,2) tensor via $g$ by the same letter, one
sees that the $Sp(n,\mathbb R)Sp(1,\mathbb R)$-invariant components are the projections on the
eigenspaces of the Casimir operator
\[C=I_1\otimes I_1+I_2\otimes I_2+I_3\otimes I_3
\]
corresponding to the eigenvalues 3 and -1, respectively.  If $n=1$, 
then the space of symmetric endomorphisms commuting with all $I_s$ is 1-
dimensional, i.e. the [3]-component of any symmetric endomorphism $\Psi$ on 
$H$  is proportional to the identity, $\Psi_{[3]}=\frac{Tr(\Psi)}4 Id_{_{|H}}$. Note here that each of the
three 2-forms $\f_s$ belongs to its [-1]-component, $\f_s=\f_{s[-1]}$ and constitutes
a basis of the Lie algebra $sp(1,\mathbb R)$.

Consider the orthogonal complement $(sp(n,\mathbb R)+sp(1,\mathbb R)^{\perp}\subset so(2n,2n)$ of the Lie algebra 
$(sp(n,\mathbb R)+sp(1,\mathbb R)\subset so(2n,2n)$ with respect to the standard neutral inner product $<,>$ on $so(2n,2n)$, comming from the standard neutral inner product of the general linear algebra $gl(4n)$, defined by $<A,B>=Tr(GAGB^t)=<(e_a,e_a)><A(e_a),B(e_a)>, A,B \in gl(4n)$.
It is known that a skew-symmetric
endomorphism $A\in so(2n,2n)$, considered as an element of the orthogonal lie
algebra $so(2n,2n)$, belongs to the orthogonal complement  $(sp(n,\mathbb R)+sp(1,\mathbb R)^{\perp}\subset so(2n,2n)$ if and only if $A$ coincides with the completely trace-free part of its
[-1]-component. More precisely, we have
\be\label{ort}
A\in  (sp(n,\mathbb R)+sp(1,\mathbb R)^{\perp}\subset so(2n,2n) \Longleftrightarrow A=A_{[-1]}-A_{sp(1,\mathbb R)},
\ee
where $A_{sp(1,\mathbb R)}$ is the orthogonal projection of $A$ onto $sp(1,\mathbb R)$ given by  
$4nA_{sp(1,\mathbb R)} = \sum_{s=1}^3 A(e_a,I_se_a)\f_s$.

\section{The canonical connection}
The purpose of this section is to construct our main tool in order to investigate the geometry of  pqc manifolds, namely we construct a canonical connection which preserves the pqc structure having simple torsion. We have
\begin{thrm}\label{main1} {Let $(M, [g],\mathbb{PQ})$ be a pqc
manifold} of dimension $4n+3>7$ with a fixed metric $g\in[g]$. Then, there exists a unique connection
$\nabla$ with
torsion $T$ on $M^{4n+3}$ and a unique supplementary subspace $V$ to $H$ in
$TM$, such that:
\begin{enumerate}
\item[i)]
$\nabla$ preserves the decomposition $H\oplus V$ and the $Sp(n,\mathbb R)Sp(1,\mathbb R)$ structure on $H$,
i.e. 

$\nabla g=0,  \nabla\sigma \in\Gamma(\mathbb PQ)$ for a section
$\sigma\in\Gamma(\mathbb{PQ})$, 
\item[ii)]
for $X,Y\in H$, one has $T(X,Y)=-[X,Y]_{|V}$;
\item[iii)] for $\xi\in V$, the endomorphism $T(\xi,.)_{|H}$ of $H$ lies in
$(sp(n,\mathbb R)\oplus sp(1,\mathbb R))^{\bot}\subset gl(4n)$;
\item[iv)]
the connection on $V$ is induced by the natural identification $\varphi$ of
$V$ with the subspace $sp(1,\mathbb R)$ of the endomorphisms of $H$, i.e.
$\nabla\varphi=0$.
\end{enumerate}
\end{thrm}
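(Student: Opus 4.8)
The plan is to adapt Biquard's construction of the canonical connection of a quaternionic contact manifold to the present neutral-signature, split setting. The argument separates into two logically independent parts: first I fix the complement $V$ together with a frame $\xi_1,\xi_2,\xi_3$, and only afterwards build $\nabla$. A preliminary remark organizes the torsion conditions: since $\nabla$ is required to preserve the splitting $H\oplus V$, for $X,Y\in H$ the horizontal part of $T(X,Y)$ is $\nabla_XY-\nabla_YX-[X,Y]_H$ while the vertical part is automatically $-[X,Y]_V$, so condition ii) is equivalent to the single requirement $\nabla_XY-\nabla_YX=[X,Y]_H$. This lets me treat the horizontal block of $\nabla$ in isolation. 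Throughout I write $\mathfrak g:=sp(n,\mathbb R)+sp(1,\mathbb R)\subset so(2n,2n)$ for the structure algebra.

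First I would prove existence and uniqueness of the Reeb fields. Choosing any local frame $\xi^0_1,\xi^0_2,\xi^0_3$ with $\eta_s(\xi^0_t)=\delta_{st}$, every competitor is $\xi_s=\xi^0_s+A_s$ with $A_s\in H$, and I substitute this into the defining relations \eqref{xi}. By the compatibility condition \eqref{ccon} the horizontal evaluations $d\eta_t(A_s,\cdot)$ become $-2\epsilon_t\,g(I_tA_s,\cdot)$, so \eqref{xi} collapses to a pointwise linear system for the vectors $A_s$ whose coefficient operator is assembled from the endomorphisms $I_s$ and the signs $\epsilon_s$. The heart of the theorem is the invertibility of this operator, and this is exactly where the hypothesis $4n+3>7$ is used: the eigenvalue analysis is controlled by the Casimir operator $C=I_1\otimes I_1+I_2\otimes I_2+I_3\otimes I_3$, whose eigenvalues are $3$ and $-1$, and when $n=1$ the $[3]$-eigenspace of a symmetric endomorphism of the four-dimensional space $H$ is forced to consist of multiples of the identity, which makes the system degenerate. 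For $n>1$ the two eigenspaces have enough room, the operator is invertible, and $V:=\mathrm{span}(\xi_1,\xi_2,\xi_3)$ is canonically determined. I expect this algebraic non-degeneracy to be the main obstacle of the proof.

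With $V$ fixed I would construct the horizontal block $\nabla_XY$, $X,Y\in H$. Preserving $g$ and $\mathbb{PQ}$ forces each $\nabla_X$, $X\in H$, to be a one-form with values in $\mathfrak g$, while the reduction of ii) above requires its antisymmetrization to reproduce $[X,Y]_H$. Existence of such a one-form is the point at which \eqref{ccon} re-enters: it identifies the vertical bracket $[X,Y]_V=\sum_s2\epsilon_s\,g(I_sX,Y)\,\xi_s$ with the fundamental forms $\omega_s$, which pins down the $sp(1,\mathbb R)$-part of $\nabla_X$ and makes the torsion normalization attainable; a Koszul-type manipulation on horizontal fields, in which all vertical contributions drop out because $g$ is carried by $H$, then yields an explicit formula for $g(\nabla_XY,Z)$. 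Uniqueness of this block is the vanishing of the first prolongation: the difference $A$ of two admissible horizontal blocks satisfies $A_X\in\mathfrak g$ and $A_XY=A_YX$, hence lies in $\mathfrak g^{(1)}$, and $\mathfrak g^{(1)}=0$ for $sp(n,\mathbb R)+sp(1,\mathbb R)$.

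Finally I would define $\nabla$ along $V$ and in the mixed directions. Condition iv) prescribes $\nabla$ on $V$ via the identification $\varphi:V\cong sp(1,\mathbb R)$: for every $U$, $\nabla_U\xi_s$ is governed by the same $sp(1,\mathbb R)$-valued one-form that already appears in $\nabla_UI_s$, so that $\nabla\varphi=0$ and $\nabla_X\xi$ for $X\in H$ is determined by the horizontal block. For the remaining derivative $\nabla_\xi X$, $\xi\in V$, $X\in H$, metric compatibility and $\mathbb{PQ}$-preservation force $\nabla_\xi$ to be the orthogonal $\mathfrak g$-projection of the endomorphism $X\mapsto[\xi,X]_H$; the complementary component is then $T(\xi,\cdot)_{|H}$, and condition iii) together with the characterization \eqref{ort}, namely $A\in\mathfrak g^{\perp}\iff A=A_{[-1]}-A_{sp(1,\mathbb R)}$, confirms that this is the unique admissible choice. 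Assembling the three blocks gives a connection satisfying i)--iv), and the blockwise uniqueness statements show it is the only one.
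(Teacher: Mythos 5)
Your skeleton follows Biquard's scheme, which is also the paper's, and your uniqueness arguments (vanishing first prolongation for the horizontal block, the $\mathfrak g^{\perp}$ normalization for $T(\xi,\cdot)_{|H}$, the $sp(1,\mathbb R)$ identification on $V$) are sound. But the existence half of your argument has a genuine gap, concentrated in the step you yourself call the heart of the theorem. The conditions \eqref{xi} do not form a square linear system for the corrections $A_s$. Writing $\xi_s=\xi^0_s+A_s$ and using \eqref{ccon}, the three diagonal conditions $(\xi_s\lrcorner d\eta_s)_{|H}=0$ read $g(I_sA_s,\cdot)=\tfrac{\epsilon_s}{2}\,d\eta_s(\xi^0_s,\cdot)_{|H}$; since each $I_s$ is invertible and $g$ is nondegenerate on $H$, these already determine $A_1,A_2,A_3$ uniquely, for every $n\ge 1$ and with no Casimir analysis. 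The remaining cross conditions $(\xi_j\lrcorner d\eta_i)_{|H}=\epsilon_k(\xi_i\lrcorner d\eta_j)_{|H}$ are then an \emph{overdetermined} set of constraints on the already-fixed $V$, not extra equations whose coefficient operator you get to invert. The actual content of the hypothesis $4n+3>7$ is that these constraints hold automatically; this is Proposition~\ref{fund} in the paper, and it is a differential statement, not pointwise linear algebra: one expands $d^2\eta_s=0$ through the horizontal metric connection (formula \eqref{DBC}), combines it with the algebraic identities satisfied by $\nabla\omega_s$ on a paraquaternionic module (Lemma~\ref{noo}), and extracts the cross conditions from a trace that carries the factor $4(n-1)$ (equation \eqref{xi23}). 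Your proposal contains no substitute for this computation, so the existence of a complement satisfying \eqref{xi} is unproved.

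The same gap reappears in your horizontal block. The Koszul formula on horizontal fields produces the unique $H$-partial \emph{metric} connection with $T(X,Y)_{|H}=0$; it does not by itself produce a connection with values in $sp(n,\mathbb R)+sp(1,\mathbb R)$. That this metric connection in fact preserves the bundle $\mathbb{PQ}$ (i.e.\ $\nabla_XI_i=\alpha_j(X)I_k+\epsilon_k\alpha_k(X)I_j$) is again part of Proposition~\ref{fund} and is proved by the same $d^2\eta_s=0$ computation; your remark that \eqref{ccon} ``pins down the $sp(1,\mathbb R)$-part of $\nabla_X$'' asserts the conclusion rather than deriving it. In short: your decomposition into blocks and your uniqueness reasoning match the paper, but the two existence claims that carry the dimensional hypothesis --- that the canonically determined $V$ satisfies the cross conditions in \eqref{xi}, and that the horizontal Koszul connection is paraquaternionic --- are exactly the content of the paper's fundamental proposition, and your proposed mechanism (invertibility of a pointwise linear system governed by the Casimir operator) would not establish either of them.
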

In ii), the neutral  inner product $<,>$ of $End(H)$ is given by 

\centerline{$<A,B> =Tr(GAGB^t)= g(e_a,e_a)  g(A(e_a),B(e_a)),$ for $A, B \in End(H)$.}

\begin{proof} The proof of the theorem follows from several propositions and  lemmas and occupies the rest of the section.

Given a pqc manifold $M$, we consider the unique complementary to $H$ in $TM$ subbundle $V, TM=H\oplus V$, which is locally generated by vector fields $\{\xi_1,\xi_2,\xi_3\}$, such that
\begin{equation}  \label{x}
\begin{aligned} \eta_s(\xi_t)=\delta_{st}, \qquad (\xi_s\lrcorner
d\eta_s)_{|H}=0.
\end{aligned}
\end{equation}
where $\lrcorner$ denotes the interior multiplication.
\begin{lemma}\label{l1} Given the splitting $TM=H\oplus V$, there exists a unique $H$-connection $\C$ preserving the horizontal metric $g$ on $H$, $\C g=0$, such that its torsion satisfies $T(X,Y)_{|H}=0$, where the subscript $|_H$ denotes the projection on $H$ in the direction on $V$.
\end{lemma}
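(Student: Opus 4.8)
The plan is to mimic the proof of the fundamental theorem of (pseudo-)Riemannian geometry, with the Levi-Civita data replaced by their horizontal counterparts. Since the sought $H$-connection differentiates horizontal fields along horizontal directions and takes horizontal values, and since the neutral metric $g$ pairs only horizontal vectors, every Lie bracket that occurs will enter solely through its $H$-projection. The entire statement will then follow from a single Koszul-type identity together with the non-degeneracy of $g$ on $H$.

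First I would establish uniqueness. Assuming a connection $\nabla$ with $\nabla g=0$ and $T(X,Y)_{|H}=0$ exists, the metric condition reads $Zg(X,Y)=g(\nabla_Z X,Y)+g(X,\nabla_Z Y)$ for all $X,Y,Z\in H$, while the torsion condition is exactly $\nabla_X Y-\nabla_Y X=[X,Y]_{|H}$. Writing the metric identity for the three cyclic permutations of $(X,Y,Z)$, adding the first two and subtracting the third, and eliminating the symmetric terms by means of the torsion relation, yields the Koszul formula
\begin{equation*}
2g(\nabla_X Y,Z)=Xg(Y,Z)+Yg(X,Z)-Zg(X,Y)+g([X,Y]_{|H},Z)-g([X,Z]_{|H},Y)-g([Y,Z]_{|H},X).
\end{equation*}
Because $g$ is non-degenerate on $H$ (it is neutral of signature $(2n,2n)$, hence still non-degenerate), the right-hand side determines $\nabla_X Y\in H$ completely, so at most one such connection exists.

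For existence I would turn this around and \emph{define} $\nabla_X Y$ by the displayed formula, using non-degeneracy to solve for the horizontal field $\nabla_X Y$. The points to verify are the standard ones: that the right-hand side is $C^\infty(M)$-linear in $Z$ (so that it genuinely represents an element of $H$ via $g$), that $\nabla$ is $C^\infty(M)$-linear in $X$ and satisfies the Leibniz rule in $Y$, and finally that symmetrizing the formula recovers $\nabla g=0$ while antisymmetrizing recovers $T(X,Y)_{|H}=0$.

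The only genuinely delicate check, and the one I expect to be the main (if modest) obstacle, is the tensoriality in $Z$, precisely because the brackets have been projected onto $H$. Replacing $Z$ by $fZ$ produces derivative terms $(Xf)g(Y,Z)$ and $(Yf)g(X,Z)$ from the first two summands, together with $-(Xf)g(Z,Y)$ and $-(Yf)g(Z,X)$ coming from $[X,fZ]_{|H}$ and $[Y,fZ]_{|H}$; here one uses that $Z\in H$ so that $g$ sees the undifferentiated factor, whence these contributions cancel in pairs. The analogous derivative computations in the remaining slots give the Leibniz rule in $Y$ and $C^\infty(M)$-linearity in $X$. No feature of the pqc structure beyond the bare splitting $TM=H\oplus V$ and the non-degeneracy of $g$ on $H$ is used, so the argument is completely insensitive to the indefinite signature.
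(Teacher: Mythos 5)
Your proposal is correct and follows exactly the paper's route: the paper proves this lemma by writing down the identical Koszul formula with the brackets projected onto $H$ and asserting that it yields existence and uniqueness. Your additional verifications (tensoriality in $Z$, Leibniz rule, recovery of the two defining conditions) are the details the paper leaves implicit, and they go through as you describe.
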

The Kozsul formula
$$2g(\C_XY,Z)=Xg(Y,Z)+Yg(X,Z)-Zg(X,Y) +g([X,Y]_{|H},Z)-g([X,Z]_{|H},Y)-g([Y,Z]_{|H},X)
$$
gives the existence and uniquennes of the connection, which proves the assertion.

The condition $T(X,Y)_{|H}=0$ is equivalent to 
\begin{equation}\label{hort}
T(X,Y)=-[X,Y]_{|V}=\sum_{s=1}^3d\eta_s(X,Y)\xi_s=-2\sum_{s=1}^3\epsilon_s\f_s(X,Y)\xi_s.
\end{equation}
The next proposition is our fundamental result.
\begin{prop}\label{fund}
The metric connection $\C$ preserves the paraquaternionic structure on $H$ and   the vertical vector fields $\{\xi_1,\xi_2,\xi_3\}$ satisfy the conditions
\begin{equation}  \label{xi}
\begin{aligned} \eta_s(\xi_t)=\delta_{st}, \qquad (\xi_s\lrcorner
d\eta_s)_{|H}=0.\\ (\xi_j\lrcorner d\eta_i)_{|H}=\epsilon_k(\xi_i\lrcorner
d\eta_j)_{|H}.
\end{aligned}
\end{equation}
\end{prop}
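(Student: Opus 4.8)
The plan is to work entirely from the defining relations \eqref{x} for $\{\xi_1,\xi_2,\xi_3\}$, the paraquaternionic compatibility \eqref{paraq}--\eqref{ccon}, the torsion formula \eqref{hort}, and the uniqueness supplied by Lemma~\ref{l1}, exploiting throughout the closedness $d(d\eta_s)=0$. The first move is to split each global closed $2$-form $d\eta_s$ relative to $TM=H\oplus V$ into a purely horizontal part $d\eta_s{}_{|H}=-2\eps_s\omega_s$, a mixed part encoded by the horizontal $1$-forms $a_{ts}:=(\xi_t\lrcorner d\eta_s)_{|H}$, and a vertical part $d\eta_s(\xi_t,\xi_u)$. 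The content of \eqref{x} is precisely $a_{ss}=0$; the new Reeb relation to be proved is the symmetry $a_{ji}=\eps_k a_{ij}$ on $H$, while preservation of $\mathbb{PQ}$ is the statement that $\C_U\omega_s$ lies in the span of $\omega_1,\omega_2,\omega_3$ for every $U$.

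The main computation is to evaluate $0=d(d\eta_i)(\xi_j,X,Y)$ for horizontal $X,Y$ by the invariant Cartan formula. Decomposing each bracket $[\xi_j,X]$, $[\xi_j,Y]$, $[X,Y]$ into its $H$- and $V$-components and using $\eta_r([\xi_j,X])=-a_{jr}(X)$ together with $[X,Y]_{|V}=2\sum_r\eps_r\omega_r(X,Y)\xi_r$ (from \eqref{hort}), every term is re-expressed through the $\omega_s$, the mixed forms $a_{st}$, and the vertical components $d\eta_s(\xi_t,\xi_u)$. I would then run this identity over the cyclic triple $(i,j,k)$ and combine the three resulting equations using the paraquaternionic multiplication $I_iI_j=-\eps_kI_k$, which on the level of the fundamental forms reads $\omega_i(I_j\cdot,\cdot)=-\eps_k\omega_k$; together with the normalization $a_{ss}=0$ this should isolate the part of $a_{ij}$ antisymmetric under exchange of the two vertical indices and pin it to the stated value $a_{ji}=\eps_k a_{ij}$.

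For the preservation claim I would first record that, since $\C g=0$ and $\C$ preserves $H$, each $\C_U I_s$ is a $g$-skew endomorphism of $H$ (differentiate the skew identity $g(I_s\cdot,\cdot)=\omega_s$) which anticommutes with $I_s$ (differentiate $I_s^2=\eps_s\,\mathrm{id}$); hence $\C_U I_s\in so(2n,2n)$ and it remains to show that its components in $sp(n,\mathbb R)$ and in $(sp(n,\mathbb R)+sp(1,\mathbb R))^{\perp}$ vanish, i.e. that $\C_U\omega_s\in\mathrm{span}\{\omega_1,\omega_2,\omega_3\}$. I would obtain the alternation of $\C\omega_s$ from $d(d\eta_s)=0$ on three horizontal vectors: because $T(X,Y)_{|H}=0$ gives $[X,Y]_{|H}=\C_XY-\C_YX$, the derivative and horizontal-bracket terms assemble into the cyclic sum of $(\C_X\omega_s)(Y,Z)$, while the torsion contribution $T(X,Y)=-2\sum_r\eps_r\omega_r(X,Y)\xi_r$ feeds back exactly the mixed forms $a_{rs}$ now controlled by the symmetry just proved. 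Combining this with metricity, the anticommutation constraints, and the Koszul uniqueness of $\C$, one forces the non-$sp(1,\mathbb R)$ parts of $\C_UI_s$ to vanish, so that $\C I_s$ is a combination of the remaining $I_t$ and thus $\C\sigma\in\Gamma(\mathbb{PQ})$.

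The main obstacle is the bookkeeping. A single closedness identity controls only the total antisymmetrization of $\C_U\omega_s$, so recovering $\C_U\omega_s$ itself requires feeding in metricity and the pointwise $Sp(n,\mathbb R)Sp(1,\mathbb R)$-representation structure, and every application of $I_iI_j=-\eps_kI_k$ must be tracked through the signs $\eps_s$. This is also precisely where the hypothesis $4n+3>7$ enters: for $n=1$ the system \eqref{x} degenerates and the splitting $so(2,2)=(sp(1,\mathbb R)+sp(1,\mathbb R))\oplus(\,\cdot\,)^{\perp}$ is exceptional, so neither the uniqueness of $\{\xi_s\}$ nor the extraction argument for $\C_UI_s$ is available, whereas for $n\ge 2$ the orthogonal complement together with the uniqueness in Lemma~\ref{l1} makes both steps determinate.
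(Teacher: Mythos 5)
Your proposal assembles the right ingredients (closedness of $d\eta_s$, the torsion formula \eqref{hort}, the paraquaternionic algebra, the need for $n>1$), but the central computation you propose for the Reeb compatibility relation does not work. You evaluate $0=d(d\eta_i)(\xi_j,X,Y)$ with one vertical and two horizontal arguments. Written out, this identity is exactly the Cartan formula $\mathbb{L}_{\xi_j}(d\eta_i)=d(\xi_j\lrcorner d\eta_i)$ restricted to $H$: it relates the Lie derivative of $\omega_i$ along $\xi_j$ to the exterior derivative of the mixed $1$-form $a_{ji}=(\xi_j\lrcorner d\eta_i)_{|H}$. Both sides are first-order differential expressions in quantities that are not under control at this stage of the construction --- the horizontal parts $[\xi_j,X]_{|H}$ (equivalently the vertical torsion $T(\xi_j,\cdot)$ and the partial connection $\nabla_{\xi_j}$, which are only defined later in the paper) and the vertical derivatives $\xi_j\,\omega_i(X,Y)$. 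Cycling over $(i,j,k)$ and inserting $I_iI_j=-\epsilon_kI_k$ only shuffles these unknowns around; it cannot produce the purely algebraic, zeroth-order relation $a_{ji}=\epsilon_k a_{ij}$. The paper instead evaluates $d^2\eta_s=0$ on \emph{three horizontal} vectors. There the only non-horizontal object entering is $[X,Y]_{|V}=-T(X,Y)=2\sum_{s=1}^3\epsilon_s\omega_s(X,Y)\xi_s$, which is already known from Lemma~\ref{l1}, so the mixed components enter \emph{algebraically} through $d\eta_s(T(X,Y),Z)$, and the only remaining unknowns are the horizontal covariant derivatives $\nabla_X\omega_i$ (equation \eqref{no}).

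The second missing idea is how those derivative terms are then eliminated. The paper proves a reconstruction lemma (Lemma~\ref{noo}): using the algebraic identities \eqref{123} and \eqref{ijk}, the full tensor $(\nabla_X\omega_i)(Y,Z)$ is expressed in \emph{two different ways} as a combination of cyclic alternations of $\nabla\omega_i,\nabla\omega_j,\nabla\omega_k$ evaluated at $I$-rotated arguments. Equating the two expressions kills all the $\nabla\omega$ terms and, after substituting \eqref{no}, leaves the purely algebraic identity \eqref{o1} in the mixed components; its trace carries the factor $4(n-1)$, which is precisely where $n>1$ enters --- not, as you suggest, through a degeneration of the splitting of $so(2,2)$ or of the uniqueness in Lemma~\ref{l1}. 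Your closing remark that ``a single closedness identity controls only the total antisymmetrization'' correctly identifies the obstacle, but your proposal does not supply the reconstruction mechanism that resolves it; without Lemma~\ref{noo} (or an equivalent), neither the Reeb relation nor the formula $\nabla_X\omega_i=\alpha_j(X)\omega_k+\epsilon_k\alpha_k(X)\omega_j$ establishing preservation of $\mathbb{PQ}$ follows.
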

\begin{proof}
To prove this result we involve the contact condition \eqref{ccon}. We use the formula expressing the exterior derivative of a 2-form in terms of a metric connection with torsion and the properties of the covariant derivative of a two form on a paraquaternionic space.

We begin with  the well known formula
\begin{multline}\label{DBC}0=d^2\eta_s(D,B,C)=(\C_Dd\eta_s)(B,C)+(\C_Bd\eta_s)(C,D)+(\C_Cd\eta_s)(D,B)\\+d\eta_s(T(D,B),C) +d\eta_s(T(B,C),D)+d\eta_s(T(C,D),B),\quad D,B,C\in\Gamma(TM).
\end{multline}
In view of \eqref{hort} and \eqref{x} the formula \eqref{DBC} yields to
\begin{multline}\label{no}
\epsilon_iA\Big\{(\C\omega_i)(X,Y,Z)\Big\}=-A\Big\{(\sum_{s=1}^3\epsilon_s\f_s(X,Y)d\eta_i(\xi_s,Z)\Big\}\\= -A\Big{\{\epsilon_j\f_j(X,Y)d\eta_i(\xi_j,Z)+\epsilon_k\f_k(X,Y)d\eta_i(\xi_k,Z)}\Big\},
\end{multline}
where the symbol $A$ stands for a cyclic sum of the arguments $\{XYZ\}$ and $\{ijk\}$ is a cyclic permutation of the numbers $\{123\}$. 

Next, we involve the paraquaternionic multiplications. Since the connection  preserves the horizontal metric, we apply \eqref{paraq} and \eqref{param}  to obtain 
\begin{equation}\label{123}
\begin{aligned}
(\C_X\f_s)(Y,Z)=g((\C_XI_s)Y,Z)=g(\C_XI_sY,Z)+g(\C_XY,I_sZ);\\
(\C_X\f_s)(I_sY,I_sZ)=g(\C_XI^2_sY,I_sZ)+g(\C_XI_sY,I^2_sZ)=\epsilon_s(\C_X\f_s)(Y,Z);\\
(\C_X\f_i)(I_jY,I_jZ)=-\epsilon_kg(\C_XI_kY,I_jZ)-\epsilon_kg(\C_XI_jY,I_kZ)=\epsilon_i(\C_X\f_i)(I_kY,I_kZ).
\end{aligned}
\end{equation}
The  three equalities in \eqref{123} and similar calculations as above,  yields to the equalities
\begin{equation}\label{ijk}
\begin{aligned}
(\C_X\f_i)(I_jY,I_kZ)+(\C_X\f_j)(I_kY,I_iZ)+(\C_X\f_k)(I_iY,I_jZ)=0.
\end{aligned}
\end{equation}
Using the identities \eqref{123} and \eqref{ijk} and after some standard calculations, we derive the following lemma.
\begin{lemma}\label{noo}
The horizontal covariant derivative of the fundamental 2-forms is determined by its skew-symmetric part as follows
\begin{eqnarray}\label{w1}\nonumber
2(\C_X\f_i)(Y,Z) &=& A\Big\{(\C\f_i)(X,Y,Z)\Big\}+\epsilon_iA\Big\{(\C\f_i)(X,I_iY,I_iZ)\Big\}-\epsilon_jA\Big\{(\C\f_j)(I_jX,I_iY,Z)\Big\}\\\nonumber&-&\epsilon_jA\Big\{(\C\f_j)(I_jX,Y,I_iZ)\Big\}
-\epsilon_iA\Big\{(\C\f_k)(I_jX,Y,Z)\Big\}-A\Big\{(\C\f_k)(I_jX,I_iY,I_iZ)\Big\}.\\\nonumber
2(\C_X\f_i)(Y,Z)&=&A\Big\{(\C\f_i)(X,Y,Z)\Big\}+\epsilon_iA\Big\{(\C\f_i)(X,I_iY,I_iZ)\Big\}-\epsilon_kA\Big\{(\C\f_k)(I_kX,I_iY,Z)\Big\}\\\nonumber &-&\epsilon_kA\Big\{(\C\f_k)(I_kX,Y,I_iZ)\Big\}
+\epsilon_iA\Big\{(\C\f_j)(I_kX,Y,Z)\Big\}+A\Big\{(\C\f_j)(I_kX,I_iY,I_iZ)\Big\}.
\end{eqnarray}
\end{lemma}
The two equalities in Lemma~\ref{noo} imply the next identities 
\begin{equation}\label{idxi}
\begin{split}
\epsilon_jA\Big\{(\C\f_j)(I_jX,I_iY,Z)\Big\}+\epsilon_jA\Big\{(\C\f_j)(I_jX,Y,I_iZ)\Big\}
+\epsilon_iA\Big\{(\C\f_k)(I_jX,Y,Z)\Big\}\\+A\Big\{(\C\f_k)(I_jX,I_iY,I_iZ)\Big\}
-\epsilon_kA\Big\{(\C\f_k)(I_kX,I_iY,Z)\Big\}-\epsilon_kA\Big\{(\C\f_k)(I_kX,Y,I_iZ)\Big\}\\
+\epsilon_iA\Big\{(\C\f_j)(I_kX,Y,Z)\Big\}+A\Big\{(\C\f_j)(I_kX,I_iY,I_iZ)\Big\}=0.
\end{split}
\end{equation}

Using \eqref{no}, we obtain from \eqref{idxi} the formulas 
\begin{equation}\label{o1}
\begin{split}
g(X,Y)\Big[d\eta_j(\xi_k,Z)-\epsilon_id\eta_k(\xi_j,Z)\Big]-\f_i(X,Y)\Big[\epsilon_id\eta_j(\xi_k,I_iZ)-d\eta_k(\xi_j,I_iZ)\Big]\\+\f_j(Y,Z)\Big[\epsilon_jd\eta_j(\xi_k,I_jX)+\epsilon_kd\eta_k(\xi_j,I_jX)\Big]+\f_k(Y,Z)\Big[\epsilon_kd\eta_j(\xi_k,I_kX)+\epsilon_jd\eta_k(\xi_j,I_kX)\Big]\\
-g(Z,X)\Big[d\eta_j(\xi_k,Y)-\epsilon_id\eta_k(\xi_j,Y)\Big]-\f_i(Z,X)\Big[\epsilon_id\eta_j(\xi_k,I_iY)-d\eta_k(\xi_j,I_iY)\Big]=0.
\end{split}
\end{equation}
Taking the trace with respect to $X$ and $Y$ into \eqref{o1} we obtain
\begin{equation}\label{xi23}
4(n-1)\Big[d\eta_j(\xi_k,Z)-\epsilon_id\eta_k(\xi_j,Z)\Big]=0,\quad \Longrightarrow \Big[d\eta_j(\xi_k,Z)-\epsilon_id\eta_k(\xi_j,Z)\Big]=0,\quad since \quad n>1.
\end{equation}
Hence, \eqref{xi} holds. 

Next,  we obtain from Lemma~\ref{noo} by applying \eqref{xi} and   \eqref{no} the formulas
\begin{equation}\label{w1fin}
\begin{split}
\C_X\f_i=\alpha_j(X)\f_k+\epsilon_k\alpha_k(X)\f_j, \qquad \C_XI_i=\alpha_j(X)I_k+\epsilon_k\alpha_k(X)I_j,
\end{split}
\end{equation}
where    
\begin{equation}\label{aa1}
\alpha_k(X)=d\eta_i(\xi_j,X)=\epsilon_kd\eta_j(\xi_i,X).
\end{equation}
The proof of Proposition~\ref{fund} is completed.
\end{proof}
Notice that \eqref{xi}  are invariant under the natural $SO(1,2)$-action. 
\subsection{Extension of the partial connection to $V$.}
On the vertical space $V$ we define  the metric of signature (1,2) by 
\[g_{|V}=(\eta_3)^2-(\eta_1)^2-(\eta_2)^2, \quad g(\xi_s,\xi_t)=-\epsilon_s\delta_{st} \] 
to obtain a  metric $g=g_{|H}+g_{|V}$ of signature (2n+1,2n+2) on $M$ by requiring $span\{\xi_1,\xi_2,\xi_3\} = V\perp H$.


Using Proposition~\ref{fund} we extend $\C$ naturally to a H-partial $Sp(n,\mathbb R)Sp(1,\mathbb R)$-connection on
 $V$ as follows
 \begin{lemma}
 The H-partial connection on  $V$ defined by
  \[\C_X\xi=[X,\xi]_{|V}, \xi\in V\]
  is  metric for the metric $g_V$ and it is identified with a connection on the paraquternionic bundle $\mathbb{PQ}=span\{I_1,I_2,I_3\}$ via the identification $\xi_i\rightarrow I_i$.
 \end{lemma}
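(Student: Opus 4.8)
The plan is to compute the vertical bracket $[X,\xi_i]_{|V}$ explicitly in the frame $\{\xi_1,\xi_2,\xi_3\}$, identify it with the right-hand side of \eqref{w1fin}, and then verify metric compatibility directly in that frame.

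First I would write $\C_X\xi_i=[X,\xi_i]_{|V}$ in components. Since $V=\mathrm{span}\{\xi_s\}$ and $\eta_t(\xi_s)=\delta_{ts}$, we have $[X,\xi_i]_{|V}=\sum_{t=1}^3\eta_t([X,\xi_i])\,\xi_t$. Applying the Cartan formula for the exterior derivative of the $1$-form $\eta_t$, and using that $\eta_t(X)=0$ because $X\in H$ together with the constancy of $\eta_t(\xi_i)=\delta_{ti}$, gives $d\eta_t(X,\xi_i)=-\eta_t([X,\xi_i])$, that is $\eta_t([X,\xi_i])=d\eta_t(\xi_i,X)$. Hence $\C_X\xi_i=\sum_{t=1}^3 d\eta_t(\xi_i,X)\,\xi_t$, and it only remains to evaluate the three coefficients.

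Next I would read off these coefficients from \eqref{xi} and \eqref{aa1}. The condition $(\xi_i\lrcorner d\eta_i)_{|H}=0$ kills the diagonal term, $d\eta_i(\xi_i,X)=0$, while reading \eqref{aa1} for the appropriate cyclic permutations gives $d\eta_j(\xi_i,X)=\epsilon_k\alpha_k(X)$ and $d\eta_k(\xi_i,X)=\alpha_j(X)$. This yields $\C_X\xi_i=\alpha_j(X)\,\xi_k+\epsilon_k\alpha_k(X)\,\xi_j$, which is verbatim the formula \eqref{w1fin} for $\C_X I_i$ with $I_s$ replaced by $\xi_s$. Thus the assignment $\xi_i\mapsto I_i$ intertwines the $H$-partial connection on $V$ with the induced connection on $\mathbb{PQ}=\mathrm{span}\{I_1,I_2,I_3\}$, proving the identification claim. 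I would also record that $\C_X\xi=[X,\xi]_{|V}$ is $C^\infty$-linear in $X$ (since $X\in H$ forces $[fX,\xi]_{|V}=f[X,\xi]_{|V}$) and obeys the Leibniz rule in $\xi$, so it is a genuine $H$-partial connection.

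Finally, for metric compatibility I would use that the components $g_V(\xi_s,\xi_t)=-\epsilon_s\delta_{st}$ are constant, so $\C g_V=0$ reduces to $g_V(\C_X\xi_s,\xi_t)+g_V(\xi_s,\C_X\xi_t)=0$. The diagonal case $s=t$ is immediate because $\C_X\xi_s$ has no $\xi_s$-component. For the off-diagonal pair $(i,j)$, substituting the formula from the previous step and using the signature factors $-\epsilon_s$ of $g_V$ together with the sign relation $\epsilon_i\epsilon_j=-\epsilon_k$ from \eqref{eps}, the two contributions come out as $\epsilon_i\alpha_k(X)$ and $-\epsilon_i\alpha_k(X)$ and cancel; the remaining off-diagonal pairs follow by cyclic relabelling. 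There is no genuine obstacle here: the whole argument is a direct frame computation, and the only care required is the index and sign bookkeeping when reading \eqref{aa1} for the various cyclic permutations and when inserting the $-\epsilon_s$ signs of $g_V$.
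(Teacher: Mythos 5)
Your proposal is correct and follows essentially the same route as the paper: expand $[X,\xi_i]_{|V}$ in the frame $\{\xi_s\}$ via $d\eta_t(\xi_i,X)$, invoke \eqref{xi} and \eqref{aa1} to obtain $\C_X\xi_i=\alpha_j(X)\xi_k+\epsilon_k\alpha_k(X)\xi_j$, and observe that the resulting connection matrix is skew with respect to $g_{|V}$ (i.e.\ lies in $so(1,2)$), which is exactly the paper's argument. Your version merely spells out the Cartan-formula step and the sign bookkeeping that the paper leaves implicit.
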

 \begin{proof}
 The definition of the connection together with \eqref{xi} and \eqref{aa1} yields
 \[\C_X\xi_i=\epsilon_s\sum_{s=1}^3d\eta_s(X,\xi_i)\xi_s=\alpha_j(X)\xi_k + \epsilon_k\alpha_k(X)\xi_j.\]
 Taking into account \eqref{xi} we see that the connection matrix is in $so(1,2)$ and therefore preserves the vertical metric $g_|V$.
 \end{proof}
 
 \subsection{Extension of the adapted partial connection}
 We show in this section how to extend the H-partial connection to a true connection. We recall  the next  general result (see e.g. \cite[Lemma~II.2.1]{B} and its proof).
 \begin{lemma}\label{lbiq}
 Given a complement $V$ to a distribution $H$ with a $\frak K$-structure on $H$ for a group $\frak K\subset GL(n)$ with Lie algebra $\frak k$ there exists a unique $V$-partial $\frak K$-connection on $H$ whose torsion
 \be\label{hortor}
T_{\xi}X=T(\xi_s,X)=\C_{\xi_s}X-[\xi_s,X]_{|_H}, \qquad \xi\in V
\ee
satisfies 
\[T_{\xi}: H\longrightarrow H \in \frak k^{\perp}.\]
 \end{lemma}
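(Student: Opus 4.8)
The plan is to adapt the standard argument for $V$-partial connections with a prescribed torsion type, exactly as in Biquard's \cite[Lemma~II.2.1]{B}, to the paraquaternionic (neutral-signature) setting. The essential point is that the existence and uniqueness of such a connection is a pointwise linear-algebraic statement, so the whole proof reduces to analysing a single linear map on the space of candidate torsions. I would begin by fixing an arbitrary $V$-partial connection $\C^0$ on $H$ compatible with the $\frak K$-structure; such connections exist because one may start from any metric partial connection (e.g. a local orthonormal frame adapted to the $Sp(n,\R)Sp(1,\R)$ structure) and project its connection form onto $\frak k$. Any other compatible partial connection then differs from $\C^0$ by a $\frak k$-valued tensor, i.e. $\C_{\xi}X=\C^0_{\xi}X+S(\xi)X$ with $S(\xi)\in\frak k$ for each $\xi\in V$.

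Next I would compute how the torsion $T_{\xi}X=\C_{\xi}X-[\xi,X]_{|H}$ changes under this modification. Writing the torsion of the reference connection as $T^0_{\xi}X=\C^0_{\xi}X-[\xi,X]_{|H}$, one finds $T_{\xi}X=T^0_{\xi}X+S(\xi)X$. The requirement of the lemma is that the endomorphism $X\mapsto T_{\xi}X$ lie in $\frak k^{\perp}$ for every $\xi\in V$. Decomposing $T^0_{\xi}$ orthogonally with respect to the neutral inner product $<,>$ on $End(H)$ as $T^0_{\xi}=(T^0_{\xi})_{\frak k}+(T^0_{\xi})_{\frak k^{\perp}}$, the condition $T_{\xi}\in\frak k^{\perp}$ forces $S(\xi)X=-(T^0_{\xi})_{\frak k}X$, since $S(\xi)\in\frak k$ and this is precisely the $\frak k$-component. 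This determines $S(\xi)$ uniquely and shows simultaneously that the resulting $T_{\xi}$ is the $\frak k^{\perp}$-component of $T^0_{\xi}$; hence both existence and uniqueness follow at once. The decomposition of $\frak k=sp(n,\R)\oplus sp(1,\R)$ inside $gl(4n)$ and the orthogonal projection onto it are exactly the operations described earlier in the excerpt via the Casimir operator and the formula for $A_{sp(1,\R)}$, so these projections are well-defined in the neutral-signature case.

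The step I expect to require the most care is verifying that the orthogonal projection $End(H)\to\frak k$ is genuinely well-defined, i.e. that $\frak k\cap\frak k^{\perp}=0$ and $End(H)=\frak k\oplus\frak k^{\perp}$ as an orthogonal direct sum with respect to $<,>$. In the positive-definite (quaternionic) case this is automatic, but here $<A,B>=Tr(GAGB^t)$ is an \emph{indefinite} (neutral) inner product, so non-degeneracy of its restriction to $\frak k$ is not free and must be checked. The key is that $\frak k=(sp(n,\R)+sp(1,\R))\subset so(2n,2n)$ is a reductive subalgebra and the form $<,>$ restricts non-degenerately to it; the explicit decomposition of any $A\in so(2n,2n)$ into its $[3]$- and $[-1]$-components together with the characterisation \eqref{ort} guarantees the complementary splitting. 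Once this non-degeneracy is in hand, the projection used to define $S(\xi)$ exists, is unique, and depends tensorially (hence smoothly) on $\xi$, which completes the argument.
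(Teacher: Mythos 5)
Your argument is correct and is essentially the proof the paper implicitly relies on: the paper does not prove Lemma~\ref{lbiq} at all but simply cites Biquard \cite[Lemma~II.2.1]{B}, whose proof is exactly your affine-space argument (compatible $V$-partial connections form an affine space modelled on $\frak k$-valued tensors, the torsion shifts by the modification $S(\xi)$, and the requirement $T_{\xi}\in\frak k^{\perp}$ uniquely kills the $\frak k$-component). You correctly isolate the one point that is not automatic in the neutral-signature setting, namely that $\langle\cdot,\cdot\rangle$ restricts non-degenerately to $\frak k=sp(n,\mathbb R)\oplus sp(1,\mathbb R)$ so that $End(H)=\frak k\oplus\frak k^{\perp}$; this holds because for $A,B\in so(2n,2n)$ one has $\langle A,B\rangle=-\mathrm{Tr}(AB)$ and the trace form is non-degenerate on this reductive subalgebra, so the projection defining $S(\xi)$ exists and is unique.
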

 We apply this result to the group $Sp(n,\mathbb R)Sp(1,\mathbb R)$ to obtain the unique $V$-partial $Sp(n,\mathbb R)Sp(1,\mathbb R)$-connection $\C$ on $H$ whose torsion $T_{\xi}\in (sp(n\mathbb R)\oplus sp(1,\mathbb R)^{\perp}$. We can write
\be\label{exh}
\C I_i=\alpha_j I_k+\epsilon_k\alpha_k I_j, \quad \C \f_i=\alpha_j \f_k+\epsilon_k\alpha_k \f_j,
\ee
where the connection 1-forms $\alpha_s(X)$ are given by \eqref{aa1} and $\alpha_s(\xi_t)$  
will be determined explicitly below such that the partial connection  has torsion  satisfying  the conditions of the theorem. 

We extend this partial connection to a true connection  on $M$ defining $\C$ on $V$ by  
\be\label{exv}
\C\xi_i=\alpha_j\xi_k + \epsilon_k\alpha_k\xi_j.
\ee
 It follows from \eqref{exv} that $\C$  preserves   the distribution  $H$ due to the following relation
\[0=Ag(\xi_s,X)=g(\C_A\xi_s,X)+g(\xi_s,\C_AX)=g(\xi_s,\C_AX), \quad A\in\Gamma(TM).
\]

Clearly, $\C$ preserves the extended metric $g=g_{|H}+g_{|V}$ on $M$, the vertical space $V$ and the $Sp(n,\mathbb R)Sp(1,\mathbb R)$ structure on $H$.

It is well known fact that a metric connecion is completely determined by its torsion.
Since  the extended metric is parallel with respect to the extended connection  $\C$, to complete the proof of Theorem~\ref{main1} it is sufficient to  determine the whole torsion $T$ of $\C$  in terms of the data supplied by the paraquaternionic contact structure.  

The difference between the Levi-Civita connection $\C^g$ of the extended metric $g$ and the metric connection $\C$ is given by the well known formula
\be\label{cl}
2g(\C_AB,C)-2g(\C^g_AB,C)=T(A,B,C)-T(B,C,A)+T(C,A,B),
\ee
where $T(A,B,C)=g(T(A,B),C)$ is the torsion of the connection $\C$ and $A,B,C\in\Gamma(TM)$.
\subsection{Determination of the torsion and the vertical connection forms} The torsion on $H$ is given by \eqref{hort}.
We observe that \eqref{exv} agrees with Lemma~\ref{lbiq}. Indeed, 
the conditions \eqref{exv} imply
\be\label{tv}
\begin{split}
d\eta_i(A,B)
=-\epsilon_j(\alpha_j\wedge\eta_k)(A,B)-(\alpha_k\wedge\eta_j)(A,B)-\epsilon_iT(A,B,\xi_i)
\end{split}
\ee
Set  $A=\xi_j, B=X$ into \eqref{tv} to get
\[
d\eta_i(\xi_j,X)=\alpha_k(X)-\epsilon_iT(\xi_i,X,\xi_j)
\]
which, in view of \eqref{aa1}, yields 
$
T(\xi_s,X,\xi_t)=0.
$

We decompose the torsion endomorphism $T(\xi,X,Y)$ into a symmetric  and an anti-symmetric parts,
\begin{equation*}
\begin{split}T(\xi,X,Y)=\tb(\xi,X,Y)+T^a(\xi,X,Y);\\\tb(\xi,X,Y)=\tb(\xi,Y,X), \quad T^a(\xi,X,Y)=-T^a(\xi,Y,X).
\end{split}
\end{equation*}
\begin{prop}
The torsion endomorphism $T(\xi,X,Y)$ is completely trace-free,
\be\label{tr-free}
T(\xi_t,e_a,e_a)=T(\xi_t,I_se_a,e_a)=0.
\ee
\begin{itemize}
\item[a)]
The skew-symmetric part of the torsion endomorphism is given by
\begin{multline}\label{torskew}
\ta(\xi_t,X,Y)=-\frac18\sum_{s=1}^3\epsilon_s\Big[g((\mathbb L_{\xi_t}I_s)X,I_sY)-g((\mathbb L_{\xi_t}I_s)Y,I_sX)-\frac1{4n}g((\mathbb L_{\xi_t}I_s)e_a,e_a)\f_s(X,Y)\Big]
\end{multline}
\item[b)]
The symmetric part of the torsion endomorphism is determined by
\begin{equation}\label{ts}
\tb(\xi_t,X,Y)=\frac12(\mathbb L_{\xi_t}g)(X,Y)
\end{equation}
\end{itemize}
\end{prop}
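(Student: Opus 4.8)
The plan is to split the torsion endomorphism $T_{\xi_t}\colon H\to H$, $T_{\xi_t}X=T(\xi_t,X)$, into its $g$-symmetric and $g$-skew parts, whose associated $(0,2)$-tensors are $\tb(\xi_t,\cdot,\cdot)$ and $\ta(\xi_t,\cdot,\cdot)$, and to obtain the two parts by genuinely different arguments. Part \eqref{ts} is the easy half and follows from metricity alone: since $\C g=0$ and $\C_X\xi\in V$ for $X\in H$ (see \eqref{exv}), on $H$ one has $T(\xi_t,X)=\C_{\xi_t}X-[\xi_t,X]_{|H}$, so expanding $(\mathbb L_{\xi_t}g)(X,Y)=\xi_t g(X,Y)-g([\xi_t,X],Y)-g(X,[\xi_t,Y])$ and replacing $\xi_t g(X,Y)=g(\C_{\xi_t}X,Y)+g(X,\C_{\xi_t}Y)$ gives $(\mathbb L_{\xi_t}g)(X,Y)=T(\xi_t,X,Y)+T(\xi_t,Y,X)=2\tb(\xi_t,X,Y)$.

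For \eqref{torskew} I would first record, from the definition of torsion and $\C_X\xi\in V$, the pointwise identity $(\mathbb L_{\xi_t}I_s)X=(\C_{\xi_t}I_s)X+I_s\,T(\xi_t,X)-T(\xi_t,I_sX)$ on $H$, i.e.\ $\mathbb L_{\xi_t}I_s=\C_{\xi_t}I_s+[I_s,T_{\xi_t}]$, where $\C_{\xi_t}I_s$ is the $sp(1,\mathbb R)$-valued term \eqref{exv}. Splitting $T_{\xi_t}=T^{sym}_{\xi_t}+T^{a}_{\xi_t}$, the commutator $[I_s,T^{sym}_{\xi_t}]$ is $g$-symmetric and $[I_s,T^{a}_{\xi_t}]$ is $g$-skew. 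I then form $\sum_s\epsilon_s I_s\,\mathbb L_{\xi_t}I_s$ and reduce it using $I_s^2=\epsilon_s\,id$ and the paraquaternionic projector $\sum_s\epsilon_s I_s\Psi I_s=4\Psi_{[3]}-\Psi$; condition iii) forces the $[3]$-part of the skew endomorphism $T^{a}_{\xi_t}$ to vanish (orthogonality to $sp(n,\mathbb R)$), so the $g$-skew part of $\sum_s\epsilon_s I_s\,\mathbb L_{\xi_t}I_s$ collapses to $\sum_s\epsilon_s I_s\,\C_{\xi_t}I_s+4\,T^{a}_{\xi_t}$, the symmetric pieces dropping out. Converting back to a $(0,2)$-tensor via $g(I_s\cdot,\cdot)$ yields $-8\,\ta(\xi_t,X,Y)=\sum_s\epsilon_s\big[g((\mathbb L_{\xi_t}I_s)X,I_sY)-g((\mathbb L_{\xi_t}I_s)Y,I_sX)\big]$ up to a residual $sp(1,\mathbb R)$-term carried by $\sum_s\epsilon_s I_s\,\C_{\xi_t}I_s$.

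That residual term is exactly where the last freedom of the connection, the vertical $1$-forms $\alpha_s(\xi_t)$, gets fixed. I would impose the second half of condition iii), orthogonality of $T^{a}_{\xi_t}$ to $sp(1,\mathbb R)$, i.e.\ $(T^{a}_{\xi_t})_{sp(1,\mathbb R)}=0$, which by \eqref{ort} is the vanishing of the twisted trace $T(\xi_t,I_se_a,e_a)$. Computing the $sp(1,\mathbb R)$-projection $4n\,(T^{a}_{\xi_t})_{sp(1,\mathbb R)}=\sum_s \ta(\xi_t,e_a,I_se_a)\,\f_s$ of the bracket term and setting it to zero expresses $\alpha_s(\xi_t)$ through the trace $g((\mathbb L_{\xi_t}I_s)e_a,e_a)$; substituting this back converts the residual $sp(1,\mathbb R)$-term into precisely the summand $-\frac1{4n}g((\mathbb L_{\xi_t}I_s)e_a,e_a)\f_s(X,Y)$, completing \eqref{torskew}. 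The twisted-trace conditions in \eqref{tr-free} then hold by construction, the symmetric part contributing nothing because $\mathrm{Tr}(I_s\,T^{sym}_{\xi_t})=0$ ($I_s$ is $g$-skew while $T^{sym}_{\xi_t}$ is $g$-symmetric).

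It remains to prove the plain trace $T(\xi_t,e_a,e_a)=\mathrm{Tr}(T_{\xi_t})=\frac12\mathrm{Tr}_H(\mathbb L_{\xi_t}g)=0$, which I expect to be the main obstacle, since it is the only place the Reeb relations \eqref{xi} enter in an essential, non-representation-theoretic way. I would contract the compatibility identity $\f_t(\cdot,I_t\cdot)=-\epsilon_t g$ (from \eqref{param}) over the adapted basis: applying $\mathbb L_{\xi_t}$ and summing gives $(\mathbb L_{\xi_t}\f_t)(e_a,I_te_a)-\mathrm{Tr}(I_t\,\mathbb L_{\xi_t}I_t)=-\epsilon_t\,\mathrm{Tr}_H(\mathbb L_{\xi_t}g)$. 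The algebraic term vanishes, $\mathrm{Tr}(I_t\,\mathbb L_{\xi_t}I_t)=0$, because $\mathbb L_{\xi_t}I_t=\C_{\xi_t}I_t+[I_t,T_{\xi_t}]$ with $\C_{\xi_t}I_t\in sp(1,\mathbb R)$ and $\mathrm{Tr}(I_s)=0$, while the commutator contributes $\epsilon_t\mathrm{Tr}(T_{\xi_t})-\mathrm{Tr}(I_tT_{\xi_t}I_t)=0$. The geometric term $\sum_a g(e_a,e_a)(\mathbb L_{\xi_t}\f_t)(e_a,I_te_a)$ I would handle by Cartan's formula $\mathbb L_{\xi_t}d\eta_t=d(\xi_t\lrcorner d\eta_t)$ together with $(\xi_t\lrcorner d\eta_t)_{|H}=0$ from \eqref{xi} and the vertical bracket values $[\xi_t,X]_{|V}=-\C_X\xi_t$ read off from \eqref{exv} and \eqref{hort}, which force it to vanish and yield $\mathrm{Tr}_H(\mathbb L_{\xi_t}g)=0$. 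The delicate bookkeeping here, separating the horizontal from the vertical parts of the brackets and tracking the $\epsilon_s$-signs through the projector $\sum_s\epsilon_s I_s\Psi I_s=4\Psi_{[3]}-\Psi$, together with the extraction of the constants $\frac18$ and $\frac1{4n}$, is where I expect the real work to lie.
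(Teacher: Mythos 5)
Your proposal is correct and follows essentially the same route as the paper: part b) from metricity of $\C$ and the splitting, part a) by writing $\mathbb L_{\xi_t}I_s=\C_{\xi_t}I_s+[I_s,T_{\xi_t}]$ and extracting from condition iii) and \eqref{ort} the $[-1]$-component minus its $sp(1,\mathbb R)$-projection, and the twisted traces from that same orthogonality. The only point where you do noticeably more work is the plain trace $T(\xi_t,e_a,e_a)=0$: your contraction of $\f_t(\cdot,I_t\cdot)=-\epsilon_t g$ together with Cartan's formula and \eqref{xi} is a legitimate, and in fact more explicit, substitute for the paper's one-line appeal to the Lie derivative commuting with the trace (which as stated only covers the endomorphism traces $g((\mathbb L_{\xi_t}I_s)e_a,e_a)$, not $(\mathbb L_{\xi_t}g)(e_a,e_a)$ itself).
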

\begin{proof}
The skew-symmetric $[-1]$-component of \eqref{hortor} is given by
\[
\sum_{s=1}^3\epsilon_sg((\C_{\xi_t}I_s)X,I_sY)-\frac12\sum_{s=1}^3\epsilon_s\{g((\mathbb L_{\xi_t}I_s)X,I_sY)-g((\mathbb L_{\xi_t}I_s)Y,I_sX)\}.
\]
We subtract  its $sp(1,\mathbb R)$-component to obtain \eqref{torskew} since $g((\C_{\xi_t}I_s)X,I_sY)\in sp(1,\mathbb R)$  due to \eqref{exh}. 

Thus, the skew-symmetric part $\ta(\xi,.,.)\in (sp(n,\mathbb R)+sp(1,\mathbb R))^{\perp}\subset so(2n,2n)$
 (c.f. \eqref{ort}). In particular,   $\ta(\xi,.,.)$ 
  satisfies the identities
\be\label{idskew}
\begin{split}
\ta(\xi_t,e_a,e_a)=0,\quad \ta(\xi_t,e_a,I_se_a)=0,\\ \ta(\xi_t,X,Y)-\epsilon_i\ta(\xi_t,I_iX,I_iY)-\epsilon_j\ta(\xi_t,I_jX,I_jY)-\epsilon_k\ta(\xi_t,I_kX,I_kY)=0.
\end{split}
\ee

Since $\C$ preserves the metric and  the splitting $H\oplus V$, we have 
\[
(\mathbb L_{\xi_t}g)(X,Y)
=(\C_{\xi_t}g)(X,Y)+T(\xi_t,X,Y)+T(\xi_t,Y,X)=2\tb(\xi_t,X,Y)
\]
which proves \eqref{ts}. 

In terms of $\C$, the Lie derivative $(\mathbb{L}_{\xi_s}I_t)X$  has the form
\be\label{licar1}
g((\mathbb{L}_{\xi_s}I_t)X,Y)=g((\C_{\xi_s}I_t)X,Y)-T(\xi_s,I_tX,Y)-T(\xi_s,X,I_tY).
\ee
Apply  \eqref{exh} to get  from \eqref{licar1} that
\begin{equation}\label{nnnewts}
g((\mathbb{L}_{\xi_s}I_t)X,X)=g((\C_{\xi_s}I_t)X,X)-2\tb(\xi_s,I_tX,X)=-2\tb(\xi_s,I_tX,X).
\end{equation}
Since the Lee derivative commutes with taking the trace, we obtain from \eqref{ts} and \eqref{nnnewts}
\begin{equation}\label{tr-freeb}
2\tb(\xi_t,e_a,e_a)=(\mathbb L_{\xi_t}g)(e_a,e_a)=0, \quad 
 2\tb(\xi_t,I_se_a,e_a)=-g((\mathbb{L}_{\xi_s}I_t)e_a,e_a)=0
\end{equation}
which combined with the first identity in \eqref{idskew} proves  \eqref{tr-free}.
\end{proof}
The next result finishes the proof of Theorem~\ref{main1}.
\begin{prop}\label{ffin}
The  torsion in the vertical directions  is determined as follows
\be\label{torf}
T(\xi_t,\xi_s,X)=-\epsilon_i(\mathbb L_{\xi_t}\f_i)(\xi_s,I_iX).
\ee
\be\label{torv1}
T(\xi_i,\xi_j,\xi_i)=T(\xi_i,\xi_k,\xi_i)=0. 
\ee
\be\label{torvv}
T(\xi_j,\xi_k,\xi_i)=T(\xi_k,\xi_i,\xi_j)=T(\xi_i,\xi_j,\xi_k)=-\lambda,
\ee
where the function $\lambda$ is given by
\be\label{lamb}
\lambda=\frac1{2n}g((\mathbb L_{\xi_i}I_k)e_a,I_je_a)-\epsilon_id\eta_i(\xi_j,\xi_k)+\epsilon_jd\eta_j(\xi_k,\xi_i) +\epsilon_kd\eta_k(\xi_i,\xi_j)
\ee
The vertical connection 1-forms are determined with the next formulas
\be\label{conv1}
\alpha_j(\xi_i)=-\epsilon_jd\eta_i(\xi_i,\xi_k),\quad \alpha_k(\xi_i)=-d\eta_i(\xi_i,\xi_j),
\ee
\be\label{convv}
\alpha_i(\xi_i)=\frac12\Big[\epsilon_kd\eta_i(\xi_j,\xi_k) +d\eta_j(\xi_k,\xi_i)-\epsilon_id\eta_k(\xi_i,\xi_j)-\epsilon_j\lambda\Big] =\frac{\epsilon_j}{4n}g((\mathbb L_{\xi_i}I_k)I_je_a,e_a).
\ee
\end{prop}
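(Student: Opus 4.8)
The plan is to produce in turn the three ingredients asserted in the proposition: the mixed torsion \eqref{torf}, the vertical connection $1$-forms \eqref{conv1}--\eqref{convv}, and the fully vertical torsion \eqref{torv1}--\eqref{torvv} with the scalar $\lambda$. For \eqref{torf} I would start from the identity expressing a Lie derivative through $\C$, namely $(\mathbb L_{\xi_t}\f_i)(B,C)=(\C_{\xi_t}\f_i)(B,C)+\f_i(\C_B\xi_t+T(\xi_t,B),C)+\f_i(B,\C_C\xi_t+T(\xi_t,C))$, and evaluate it at $B=\xi_s$, $C=I_iX$, using the convention that each $\f_s$ is extended to $M$ by $\xi\lrcorner\f_s=0$. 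By \eqref{exh} the term $(\C_{\xi_t}\f_i)(\xi_s,I_iX)$ vanishes, being a combination of $\f_j,\f_k$ contracted with $\xi_s$, while by \eqref{exv} both $\C_{\xi_s}\xi_t$ and $\C_{I_iX}\xi_t$ are vertical, so every surviving term annihilates $\xi_s$ except the one containing $T(\xi_t,\xi_s)$. The identity collapses to $(\mathbb L_{\xi_t}\f_i)(\xi_s,I_iX)=\f_i(T(\xi_t,\xi_s)_{|H},I_iX)=g(I_iT(\xi_t,\xi_s)_{|H},I_iX)$, and the compatibility $g(I_i\cdot,I_i\cdot)=-\epsilon_ig(\cdot,\cdot)$ of \eqref{param} turns the right-hand side into $-\epsilon_iT(\xi_t,\xi_s,X)$, which is \eqref{torf}.

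Next I would determine the forms $\alpha_s(\xi_t)$. By condition (iv) of Theorem~\ref{main1} these are exactly the $sp(1,\mathbb R)$-part of the $V$-partial connection of Lemma~\ref{lbiq}; to extract them I would feed \eqref{exh} into \eqref{licar1} and take the appropriate $sp(1,\mathbb R)$-traces against the $I_s$. The torsion contributions drop out because the torsion endomorphism is completely trace-free, \eqref{tr-free} — after reindexing the orthonormal basis by an $I_s$ and using $I_sI_t=-\epsilon_rI_r$ from \eqref{paraq} — so each trace isolates a single $\alpha_s(\xi_t)$. This yields the trace expressions, in particular the second equality of \eqref{convv} for the diagonal forms $\alpha_i(\xi_i)$ and the companion formula $\alpha_k(\xi_i)=\tfrac{\epsilon_i}{4n}g((\mathbb L_{\xi_i}I_i)e_a,I_je_a)$ for the off-diagonal ones.

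To convert these traces into the closed forms \eqref{conv1}--\eqref{convv} I would invoke Cartan's formula $\mathbb L_{\xi_i}d\eta_s=d(\xi_i\lrcorner d\eta_s)$ (valid since $d^2\eta_s=0$), together with $(\xi_i\lrcorner d\eta_i)_{|H}=0$ from \eqref{xi}, which forces $\xi_i\lrcorner d\eta_i=\sum_j d\eta_i(\xi_i,\xi_j)\eta_j$, and the contact normalisation $d\eta_{s|H}=-2\epsilon_s\f_s$ of \eqref{thirteen}. Writing $\mathbb L_{\xi_i}\f_s$ in terms of $\mathbb L_{\xi_i}I_s$ and $\mathbb L_{\xi_i}g=2\tb(\xi_i,\cdot,\cdot)$ and tracing (the $\tb$-trace vanishing again by \eqref{tr-free}), the off-diagonal computation produces $\alpha_k(\xi_i)=-d\eta_i(\xi_i,\xi_j)$ and its partner, i.e. \eqref{conv1}, while the diagonal computation produces the first equality of \eqref{convv}, the residual horizontal trace being the function $\lambda$ of \eqref{lamb}.

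Finally I would compute $T(\xi_t,\xi_s,\xi_r)=g(\C_{\xi_t}\xi_s,\xi_r)-g(\C_{\xi_s}\xi_t,\xi_r)-g([\xi_t,\xi_s],\xi_r)$, expanding the first two terms by \eqref{exv} and the vertical metric $g(\xi_s,\xi_t)=-\epsilon_s\delta_{st}$, and using $g([\xi_t,\xi_s],\xi_r)=\epsilon_rd\eta_r(\xi_t,\xi_s)$, which comes from $\eta_r([\xi_t,\xi_s])=-d\eta_r(\xi_t,\xi_s)$. Substituting the off-diagonal forms \eqref{conv1} gives $T(\xi_i,\xi_j,\xi_i)=T(\xi_i,\xi_k,\xi_i)=0$, namely \eqref{torv1}, and the three cyclic components come out as combinations of the diagonal forms $\alpha_i(\xi_i)$ and the functions $d\eta_s(\xi_t,\xi_r)$, with common value $-\lambda$. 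I expect the main obstacle to be precisely this last point: the cyclic symmetry \eqref{torvv} is equivalent to two linear relations among the three $\alpha_i(\xi_i)$, which are not automatic for a metric connection and only emerge after the $d\eta(\xi,\xi)$ terms from the vertical bracket are reconciled with the horizontal trace defining $\lambda$; carrying the $\epsilon_s$ signs and the neutral signature through this reconciliation without error is the delicate part.
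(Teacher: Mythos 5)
Your treatment of \eqref{torf}, of the off-diagonal forms \eqref{conv1}, and of the trace formula in the second equality of \eqref{convv} is sound and essentially coincides with the paper's: the paper also gets \eqref{torf} by evaluating $\mathbb L_{\xi_t}\f_i$ on $(\xi_s,I_iX)$ and using that $\f_i$ is horizontal, gets \eqref{conv1} by comparing the Cartan-formula expression \eqref{liom1} for $\mathbb L_{\xi_i}\f_i$ with \eqref{licar} and \eqref{exh}, and obtains $\alpha_i(\xi_i)=\frac{\epsilon_j}{4n}g((\mathbb L_{\xi_i}I_k)I_je_a,e_a)$ by tracing \eqref{n1} against the trace-free torsion. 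Your derivation of \eqref{torv1} by expanding $T(\xi_i,\xi_j,\xi_i)$ via \eqref{exv} and substituting \eqref{conv1} is also exactly the paper's route (the first two identities of \eqref{vaa}).

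The genuine gap is \eqref{torvv} together with \eqref{lamb} and the first equality of \eqref{convv}, i.e.\ precisely the point you flag as ``the delicate part'' and then leave unresolved. Expanding $T(\xi_j,\xi_k,\xi_i)$ via \eqref{exv} gives the third line of \eqref{vaa}, $d\eta_i(\xi_j,\xi_k)=-\epsilon_j\alpha_j(\xi_j)+\alpha_k(\xi_k)-\epsilon_iT(\xi_j,\xi_k,\xi_i)$, so the cyclic symmetry of the vertical torsion is equivalent to two linear relations among the diagonal forms $\alpha_s(\xi_s)$ and the quantities $d\eta_s(\xi_t,\xi_r)$ --- as you say --- but your proposal supplies no mechanism for producing those relations. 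Your suggested device, tracing a single off-diagonal Lie derivative $\mathbb L_{\xi_j}\f_i$ obtained from the Cartan formula, does not work: by \eqref{liom2}--\eqref{liom3} each such Lie derivative contains the terms $\epsilon_id\alpha_k$ and $\alpha_i\wedge\alpha_j$ restricted to $H$, which by \eqref{rhoal} are curvature (Ricci $2$-form) contributions and do not vanish under the trace against $I_j$. The paper's key observation is that in the \emph{symmetrized} combination $(\mathbb L_{\xi_i}\f_j)+(\mathbb L_{\xi_j}\f_i)$ these curvature terms cancel, leaving a pure $sp(1,\mathbb R)$ expression; comparing this with the same combination computed from \eqref{licar} and \eqref{exh}, and tracing using \eqref{tr-free}, yields the missing relation \eqref{val}, $\epsilon_jd\eta_i(\xi_j,\xi_k)+\epsilon_id\eta_j(\xi_i,\xi_k)+\alpha_j(\xi_j)+\epsilon_i\alpha_i(\xi_i)=0$. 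Only after combining \eqref{val} with the cyclic sum \eqref{vii} of \eqref{vaa} does one obtain $T(\xi_i,\xi_j,\xi_k)=T(\xi_j,\xi_k,\xi_i)=T(\xi_k,\xi_i,\xi_j)=-\lambda$, and then \eqref{convv} and \eqref{lamb} follow by back-substitution. Without an argument of this type your plan cannot establish the common value in \eqref{torvv}, nor reconcile the trace definition of $\lambda$ in \eqref{lamb} with the $d\eta(\xi,\xi)$ terms.
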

\begin{proof}
We calculate $
(\mathbb L_{\xi_t}\f_i)(\xi_s,I_iX)=-\f_i([\xi_t,\xi_s],I_iX)=-\epsilon_iT(\xi_t,\xi_s,X)$ 
which proves \eqref{torf}.

Set  $A=\xi_i, B=\xi_j, B=\xi_k$  into \eqref{tv} to get
\be\label{vaa}
\begin{split}
d\eta_i(\xi_i,\xi_j)=-\alpha_k(\xi_i)-\epsilon_iT(\xi_i,\xi_j,\xi_i);\quad
d\eta_i(\xi_i,\xi_k)=-\epsilon_j\alpha_j(\xi_i)-\epsilon_iT(\xi_i,\xi_k,\xi_i);\\
d\eta_i(\xi_j,\xi_k)=-\epsilon_j\alpha_j(\xi_j)+\alpha_k(\xi_k)-\epsilon_iT(\xi_j,\xi_k,\xi_i).
\end{split}
\ee
A suitable cyclic sum of the third equality in \eqref{vaa} yields
\begin{multline}\label{vii}
2\alpha_i(\xi_i)=\epsilon_kd\eta_i(\xi_j,\xi_k) +d\eta_j(\xi_k,\xi_i)-\epsilon_id\eta_k(\xi_i,\xi_j)\\+\epsilon_j\Big[ T(\xi_i,\xi_j,\xi_k)- T(\xi_j,\xi_k,\xi_i)+ T(\xi_k,\xi_i,\xi_j)\Big] 
\end{multline}
We will apply the Cartan formula
\be\label{car}
\mathbb{L}_{\xi_k}\f_l=\xi_k\lrcorner(d\f_l)+d(\xi_k\lrcorner\f_l).
\ee
Using \eqref{thirteen} we obtain after some standard calculations that
\begin{equation}\label{om1}
2\f_l=(d\eta_l)_{|H} =-\epsilon_ld\eta_l+\epsilon_l\sum_{s=1}^3\eta_s\wedge(\xi_s\lrcorner d\eta_l)-\epsilon_l\sum_{1\le s\le t\le 3}d\eta_l(\xi_s,\xi_t)\eta_s\wedge\eta_t
\end{equation}
We get from \eqref{car} and \eqref{om1} after some  calculations the next formulas
\begin{gather}\label{liom1}
(\mathbb{L}_{\xi_i}\f_i)(X,Y)=-\epsilon_k\f_j(X,Y)d\eta_i(\xi_i,\xi_j)-\epsilon_j\f_k(X,Y)d\eta_i(\xi_i,\xi_k);\\\label{liom2}
2(\mathbb{L}_{\xi_j}\f_i)(X,Y)=-\epsilon_id(\xi_j\lrcorner d\eta_i)(X,Y)+\epsilon_i(\xi_j\lrcorner d\eta_k)\wedge(\xi_k\lrcorner d\eta_i)(X,Y)\\\nonumber
=-\epsilon_id\alpha_k(X,Y)+\epsilon_j(\alpha_i\wedge\alpha_j)(X,Y)+2d\eta_i(\xi_j,\xi_i)\f_i(X,Y)-2\epsilon_jd\eta_i(\xi_j,\xi_k)\f_k(X,Y);\\\label{liom3}
2(\mathbb{L}_{\xi_i}\f_j)(X,Y)=-\epsilon_jd(\xi_i\lrcorner d\eta_j)(X,Y)+\epsilon_j(\xi_i\lrcorner d\eta_k)\wedge(\xi_k\lrcorner d\eta_j)(X,Y)\\\nonumber
=\epsilon_id\alpha_k(X,Y)-\epsilon_j(\alpha_i\wedge\alpha_j)(X,Y)+2d\eta_j(\xi_i,\xi_j)\f_j(X,Y)-2\epsilon_id\eta_j(\xi_i,\xi_k)\f_k(X,Y),
\end{gather}
where we used \eqref{aa1} and \eqref{xi} to achieve the second identites in \eqref{liom2} and \eqref{liom3}, respectively.

On the other hand, we have  
\begin{equation}\label{licar}
\begin{split}
(\mathbb{L}_{\xi_s}\f_t)(X,Y)
=g((\C_{\xi_s}I_t)X,Y)-T(\xi_s,X,I_tY)+T(\xi_s,Y,I_tX);
\end{split}
\end{equation}
Apply  \eqref{exh} to \eqref{licar}  with $s=t=i$ and the obtained equality compare with \eqref{liom1} to get \eqref{conv1} and 
\begin{gather}\label{torh1}
T(\xi_i,X,I_iY)=T(\xi_i,Y,I_iX) \Longleftrightarrow T(\xi_s,I_sX,I_sY)=\epsilon_sT(\xi_s,Y,X).
\end{gather} 
The first two equalities in \eqref{vaa} together with the alredy proved \eqref{conv1} imply \eqref{torv1}.

The sum  of \eqref{liom2} and \eqref{liom3} yields
\begin{multline*}
(\mathbb{L}_{\xi_i}\f_j)(X,Y)+(\mathbb{L}_{\xi_j}\f_i)(X,Y)=d\eta_i(\xi_j,\xi_i)\f_i(X,Y)+d\eta_j(\xi_i,\xi_j)\f_j(X,Y)\\-[\epsilon_jd\eta_i(\xi_j,\xi_k)+\epsilon_id\eta_j(\xi_i,\xi_k)]\f_k(X,Y)
\end{multline*}
On the othe hand, we obtain from  \eqref{licar} and \eqref{exh} that
\begin{multline*}(\mathbb{L}_{\xi_i}\f_j)(X,Y)+(\mathbb{L}_{\xi_j}\f_i)(X,Y)=\alpha_k(\xi_i)\f_i(X,Y)+\epsilon_k\alpha_k(\xi_j)\f_j(X,Y)\\+[\alpha_j(\xi_j)+\epsilon_i\alpha_i(\xi_i)]\f_k(X,Y)-T(\xi_j,X,I_iY)+T(\xi_j,Y,I_iX)-T(\xi_i,X,I_jY)+T(\xi_i,Y,I_jX)
\end{multline*}
We compare the last two identities and use \eqref{conv1} to get
\begin{multline}\label{th}
[\epsilon_jd\eta_i(\xi_j,\xi_k)+\epsilon_id\eta_j(\xi_i,\xi_k)+\alpha_j(\xi_j)+\epsilon_i\alpha_i(\xi_i)]\f_k(X,Y)\\=
T(\xi_j,X,I_iY)-T(\xi_j,Y,I_iX)+T(\xi_i,X,I_jY)-T(\xi_i,Y,I_jX).
\end{multline}
Taking the trace in \eqref{th} and use  that the torsion endomorphism is completely trace-free, we obtain 
\begin{gather}\label{val}
\epsilon_jd\eta_i(\xi_j,\xi_k)+\epsilon_id\eta_j(\xi_i,\xi_k)+\alpha_j(\xi_j)+\epsilon_i\alpha_i(\xi_i)=0;\\\label{torhh}
T(\xi_j,X,I_iY)-T(\xi_j,Y,I_iX)+T(\xi_i,X,I_jY)-T(\xi_i,Y,I_jX)=0.
\end{gather}

On the other hand, we get from \eqref{vii}
\[\alpha_j(\xi_j)+\epsilon_i\alpha_i(\xi_i)=-\epsilon_jd\eta_i(\xi_j,\xi_k)-\epsilon_id\eta_j(\xi_i,\xi_k)+\epsilon_k[T(\xi_j,\xi_k,\xi_i)-T(\xi_k,\xi_i,\xi_j)],
\]
which compared with \eqref{val} implies \eqref{torvv}.

Substitute \eqref{torvv} into \eqref{vii} to obtain the first equality in \eqref{convv}.


To complete the proof of the Proposition~\ref{ffin} we need to express the function $\lambda$ in terms of the Lie derivatives of the structure. We have  from  \eqref{licar1} using \eqref{exh} that
\begin{multline}\label{n1}
g((\mathbb L_{\xi_i}I_k)I_jX,Y)-g((\mathbb L_{\xi_i}I_k)X,I_jY)=g((\C_{\xi_i}I_k)I_jX,Y))-g((\C_{\xi_i}I_k)X,I_jY))\\
-T(\xi_i,I_kI_jX,Y)-T(\xi_i,I_jX,I_kY)+T(\xi_i,I_kX,I_jY)+T(\xi_i,X,I_kI_jY)\\
=2\epsilon_j\alpha_i(\xi_i)g(X,Y)-\epsilon_iT(\xi_i,I_iX,Y)-T(\xi_i,I_jX,I_kY)+T(\xi_i,I_kX,I_jY)+\epsilon_iT(\xi_i,X,I_iY)
\end{multline}
The trace  in \eqref{n1} gives $$2\epsilon_j\alpha_i(\xi_i)=\frac1{2n}g((\mathbb L_{\xi_i}I_k)I_je_a,e_a).$$  
Combine the latter  with the already proved first identity in \eqref{convv} to get \eqref{lamb} and the second identity in \eqref{convv}. 
\end{proof}
Thus, the  proof of Theorem~\ref{main1} is completed.
\end{proof}
\begin{dfn}
We call the connection, $\C$ constructed in Theorem~\ref{main1}, the canonical paraquaternionic contact connection (canonical pqc-connection). We call the vertical vector fields $\xi_s$  the Reeb vector fields.
\end{dfn}

\subsection{Description of the torsion endomorphism}
Now, we describe the torsion endomorphism. 

The symmetric and anti-symmetric parts of \eqref{torh1} imply
\begin{gather}\label{torhs}
\tb(\xi_s,I_sX,I_sY)=\epsilon_s\tb(\xi_s,X,Y)\Longleftrightarrow \tb(\xi_s,X,I_sY)=\tb(\xi_s,I_sX,Y) \\\label{torha}
\ta(\xi_s,I_sX,I_sY)=-\epsilon_s\ta(\xi_s,X,Y) \Longleftrightarrow \ta(\xi_s,X,I_sY)=-\ta(\xi_s,I_sX,Y)
\end{gather}
The symmetric and skew-symmetric parts of \eqref{torhh}, with the help of \eqref{torh1}, give
\begin{gather}\label{tors}
\tb(\xi_j,I_iX,I_iY)-\epsilon_i\tb(\xi_j,X,Y)+\tb(\xi_i,I_iX,I_jY)+\tb(\xi_i,I_jX,I_iY)=0\\\label{tora}
\ta(\xi_j,I_iX,I_iY)+\epsilon_i\ta(\xi_j,X,Y)+\ta(\xi_i,I_iX,I_jY)+\ta(\xi_i,I_jX,I_iY)=0.
\end{gather}
We  define the tensor $\tau(X,Y)$ on $H$ by the formula
\be\label{deftau}
\tau(X,Y)=-\epsilon_i\tb(\xi_i,I_iX,Y)-\epsilon_j\tb(\xi_j,I_jX,Y)-\epsilon_k\tb(\xi_k,I_kX,Y).
\ee
The tensor $\tau$ does not depend on the particular choice of the Reeb vector fields and is invariant under the natural action of $SO(1,2)$. Indeed, if $\bar\eta_s=\sum_{t=1}^3\Phi_{st}\eta_t, \Phi_{st}\in SO(1,2)$, we have $\bar\xi_s=\sum_{t=1}^3\Phi_{st}\xi_t$ and $\bar I_s=\sum_{t=1}^3\Phi_{st}I_t$, which substituted into \eqref{deftau} does not change it.  The properties of the symmetric part of the torsion endomorphism are encoded in the tensor $\tau$.
\begin{prop}\label{lts}
The $SO(1,2)$-invariant tensor $\tau$ on $H$ is symmetric, trace-free,  belongs to the [-1]- component and determines the symmetric part of the torsion endomorphism, i.e. it satisfies the relations
\begin{gather}\label{tau-trfree}
\tau(X,Y)=\tau(Y,X), \quad \tau(e_a,e_a)=\tau(I_se_a,e_a)=0;\\\label{tau-sym}
\tau(X,Y)-\tau(I_1,X,I_1,Y)-\tau(I_2X,I_2Y)+\tau(I_3X,I_3Y)=0;\\\label{tau-1}
\tb(\xi_s,X,Y)=-\frac14\Big[\tau(I_sX,Y)+\tau(X,I_sY) \Big]
\end{gather}
\end{prop}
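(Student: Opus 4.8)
The plan is to establish the three asserted properties of $\tau$ in the order (i) trace-freeness, (ii) symmetry, (iii) the $[-1]$-membership relation \eqref{tau-sym}, and finally (iv) the reconstruction formula \eqref{tau-1}, drawing systematically on the symmetry identities \eqref{torhs}, \eqref{tors} already proved for the symmetric part $\tb$ of the torsion endomorphism together with the trace-free relation \eqref{tr-free}.

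First I would prove the traces in \eqref{tau-trfree}. Taking the trace $X=Y=e_a$ in the defining formula \eqref{deftau} gives $\tau(e_a,e_a)=-\sum_s\epsilon_s\tb(\xi_s,I_se_a,e_a)$; each summand vanishes because $\tb(\xi_t,I_se_a,e_a)=0$ by \eqref{tr-free}. For the mixed trace $\tau(I_te_a,e_a)$ I would substitute $X\to I_te_a$ into \eqref{deftau} and use the relation \eqref{torhs} in the form $\tb(\xi_s,I_sX,Y)=\tb(\xi_s,X,I_sY)$ to rewrite each term as a trace of the type $\tb(\xi_s,\,\cdot\,,\,\cdot\,)$ over an orthonormal frame, whereupon \eqref{tr-free} kills it again (the case $s=t$ reduces to the pure trace, the cases $s\ne t$ reduce to mixed traces). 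The symmetry $\tau(X,Y)=\tau(Y,X)$ is the genuinely substantive point: naively each term $\tb(\xi_s,I_sX,Y)$ is \emph{not} symmetric in $X,Y$ on its own, so the symmetry of the sum must come from the cross-relations between different indices. Concretely I would compute $\tau(X,Y)-\tau(Y,X)$, use $\tb(\xi_s,\cdot,\cdot)$ symmetric to turn $\tb(\xi_s,I_sX,Y)-\tb(\xi_s,I_sY,X)$ into an expression of the form $\tb(\xi_s,I_sX,Y)-\tb(\xi_s,X,I_sY)$ via \eqref{torhs}, and then combine the three indices using \eqref{tors}; the identity \eqref{tors}, summed appropriately over the cyclic triple $(i,j,k)$, is exactly the obstruction that forces the antisymmetric part to vanish.

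I expect the symmetry step to be the main obstacle, precisely because it requires the correct bookkeeping of the three cross-index identities \eqref{tors} rather than a single-index manipulation; getting the signs $\epsilon_s$ to conspire is where the computation is delicate. Once symmetry is in hand, the $[-1]$-relation \eqref{tau-sym} should follow by substituting $X\to I_iX$, $Y\to I_iY$ into \eqref{deftau} for each $i$ and applying \eqref{torhs} and \eqref{tors} again: the combination $\tau(X,Y)-\sum_i \epsilon_i\,\tau(I_iX,I_iY)$ (with the sign pattern dictated by the $\epsilon_s$) collapses to zero, which is the characterising condition that $\tau$ has no $[3]$-component. For the final formula \eqref{tau-1} I would start from the right-hand side $-\tfrac14[\tau(I_sX,Y)+\tau(X,I_sY)]$, expand each $\tau$ using \eqref{deftau}, and reduce the resulting nine $\tb$-terms using the single-index relations \eqref{torhs} (which handle the $I_s^2=\epsilon_s$ reductions) together with the cross-index relations \eqref{tors}; all terms with index $\ne s$ cancel in pairs and the index-$s$ terms assemble, via the already-established symmetry and trace-freeness of $\tb(\xi_s,\cdot,\cdot)$, into $\tb(\xi_s,X,Y)$. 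This last reduction is essentially the inverse of the definition \eqref{deftau} and is the step that justifies calling $\tau$ the tensor that ``encodes'' the symmetric torsion.
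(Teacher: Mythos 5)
Your plan is correct and follows essentially the same route as the paper: every assertion is extracted from the definition \eqref{deftau} combined with \eqref{torhs}, \eqref{tors} and the trace identities \eqref{tr-freeb}, and for \eqref{tau-1} the paper computes the equivalent combination $\tau(X,Y)+\epsilon_s\tau(I_sX,I_sY)=-4\epsilon_s\tb(\xi_s,I_sX,Y)$ using the consequences \eqref{tors1} of \eqref{tors}, which is just a tidier packaging of your direct expansion of $\tau(I_sX,Y)+\tau(X,I_sY)$. Your one misjudgement is the symmetry step: each summand $\epsilon_s\tb(\xi_s,I_sX,Y)$ is already symmetric in $X,Y$ on its own, since \eqref{torhs} gives $\tb(\xi_s,I_sX,Y)=\tb(\xi_s,X,I_sY)=\tb(\xi_s,I_sY,X)$, so no cross-index identity is needed there; the identities \eqref{tors} are genuinely needed only for \eqref{tau-1}, where the mixed-index terms do not cancel in pairs but combine to produce the remaining multiple of $\tb(\xi_s,X,Y)$.
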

\begin{proof}
The formulas in \eqref{tau-trfree} are  consequences  of  \eqref{deftau}, \eqref{torhs} and \eqref{tr-freeb}.

The equality \eqref{tau-sym} follows by a small calculation from  \eqref{deftau} and 
\eqref{torhs}.

To prove \eqref{tau-1} we combine \eqref{deftau}, \eqref{torhs} and \eqref{tors}. It follows  from \eqref{tors} that
\be\label{tors1}
\begin{split}
\tb(\xi_j,I_kX,I_iY)-\epsilon_j\tb(\xi_j,I_jX,Y)+\tb(\xi_i,I_kX,I_jY)+\epsilon_i\tb(\xi_i,I_iX,Y)=0;\\
-\tb(\xi_k,I_jX,I_iY)-\epsilon_k\tb(\xi_k,I_kX,Y)-\tb(\xi_i,I_jX,I_kY)+\epsilon_i\tb(\xi_i,I_iX,Y)=0.
\end{split}
\ee
We calculate from \eqref{deftau} by applying \eqref{tors1} and \eqref{torhs} 
\begin{multline}\label{mid}
\tau(X,Y)+\epsilon_i\tau(I_iX,I_iY)=-2\epsilon_i\tb(\xi_i,I_iX,Y)-\epsilon_j\tb(\xi_j,I_jX,Y)\\+\tb(\xi_j,I_kX,I_iY)-\epsilon_k\tb(\xi_k,I_kX,Y)-\tb(\xi_k,I_jX,I_iY)\\=-4\epsilon_i\tb(\xi_i,I_iX,Y)-\tb(\xi_i,I_kX,I_jY)+\tb(\xi_i,I_jX,I_kY)=-4\epsilon_i\tb(\xi_i,I_iX,Y),
\end{multline}
where we use \eqref{torhs}  in the final step. Clearly, \eqref{mid} is equivalent to \eqref{tau-1}. 
\end{proof}
Next, we characterize the skew-symmetrict part of the torsion endomorphism. We define 
\be\label{defmu}
\mu_s(X,Y)=\epsilon_s\ta(\xi_s,I_sX,Y).
\ee
\begin{prop}\label{lta} The following holds true:
\begin{itemize}
\item[a)] The tensors $\mu_s$ are trace-free, symmetric and equal;
\item[b)] The symmetric trace-free tensor, defined by $\mu=\mu_i$, has the properties
\be\label{propmu}
\mu(I_sX,I_sY)=-\epsilon_s\mu(X,Y)
\ee
and therefore it is $SO(1,2)$-invariant.
\item[c)] If the dimension is seven, then $\mu=0$.
\item[d)] The $SO(1,2)$-invariant tensor $\mu$ determines the skew-symmetric part of the torsion 
 by
\be\label{mus}
\ta(\xi_s,X,Y)=\mu(I_sX,Y).
\ee
\end{itemize}
\end{prop}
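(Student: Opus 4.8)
The plan is to mirror the structure of the proof of Proposition~\ref{lts} for the symmetric part, now applied to the skew-symmetric part $\ta(\xi_s,\cdot,\cdot)$, using the identities \eqref{idskew}, \eqref{torha} and \eqref{tora} in place of \eqref{idxi}, \eqref{torhs} and \eqref{tors}. The tensors $\mu_s$ defined in \eqref{defmu} are the skew-symmetric analogues of the expression appearing under the sum in \eqref{deftau}, so I expect each step of the argument to have a direct counterpart.

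First I would establish part (a). To see that each $\mu_s$ is symmetric, I would apply the skew-symmetry of $\ta$ in its last two slots together with the relation \eqref{torha}, which gives $\ta(\xi_s,I_sX,Y)=\ta(\xi_s,I_sY,X)$ after using $\epsilon_s I_s^{-1}=I_s$; writing this out yields $\mu_s(X,Y)=\mu_s(Y,X)$. The trace-free property follows from the completely trace-free conditions \eqref{idskew} on $\ta(\xi_s,\cdot,\cdot)$: the contractions $\ta(\xi_s,e_a,e_a)=0$ and $\ta(\xi_s,I_te_a,e_a)=0$ translate directly into $\mu_s(e_a,e_a)=0$ and $\mu_s(I_te_a,e_a)=0$. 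The heart of (a) is the equality $\mu_i=\mu_j=\mu_k$; here I would feed \eqref{torha} into the skew-symmetric identity \eqref{tora} to rewrite $\ta(\xi_j,I_iX,I_iY)$ and the mixed terms, exactly paralleling the computation \eqref{mid} from the symmetric case, and conclude that the three tensors coincide.

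Next, for part (b), once $\mu:=\mu_i$ is well defined I would verify \eqref{propmu} by substituting $I_sX, I_sY$ into $\mu_s(X,Y)=\epsilon_s\ta(\xi_s,I_sX,Y)$ and invoking \eqref{torha} in the form $\ta(\xi_s,I_sX,I_sY)=-\epsilon_s\ta(\xi_s,X,Y)$; the sign bookkeeping with the $\epsilon$'s gives the stated transformation law, from which $SO(1,2)$-invariance follows by the same substitution argument used for $\tau$ after Proposition~\ref{lts} (replacing $\bar\xi_s=\sum_t\Phi_{st}\xi_t$, $\bar I_s=\sum_t\Phi_{st}I_t$ into \eqref{defmu} and using $\mu_i=\mu_j=\mu_k$). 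Part (d), the reconstruction formula \eqref{mus}, should then be immediate by inverting \eqref{defmu}: applying $I_s$ and using $I_s^2=\epsilon_s$ turns $\mu(I_sX,Y)=\epsilon_s\ta(\xi_s,I_sI_sX,Y)=\ta(\xi_s,X,Y)$.

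The step I expect to be the genuine obstacle is part (c), the vanishing of $\mu$ in dimension seven ($n=1$). This is a dimension-counting phenomenon rather than an algebraic manipulation: I would combine the trace-free relations \eqref{tau-trfree}-type conditions for $\mu$, namely that $\mu$ is symmetric, trace-free, and satisfies \eqref{propmu}, to show that $\mu$ lies in an $Sp(1,\mathbb R)$-invariant space of symmetric tensors which becomes trivial when $n=1$. Concretely, the condition \eqref{propmu} forces $\mu$ into the analogue of the $[-1]$-component, and the remark following the Casimir-operator discussion states that for $n=1$ the space of symmetric endomorphisms commuting with all $I_s$ is one-dimensional; the anti-commuting conditions imposed by \eqref{propmu} together with trace-freeness then leave no room, so $\mu=0$. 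The care required is in correctly identifying which representation-theoretic component $\mu$ occupies and confirming that component vanishes precisely at $n=1$, which is where I would spend the most effort.
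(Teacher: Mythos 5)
Your proposal follows essentially the same route as the paper for parts (a), (b) and (d): symmetry and trace-freeness of $\mu_s$ from \eqref{torha}, skew-symmetry in the last two slots and \eqref{idskew}; the equality $\mu_i=\mu_j=\mu_k$ from feeding \eqref{torha} into \eqref{tora} (the paper's \eqref{muij}) combined with the consequence of \eqref{idskew} (the paper's \eqref{muijk}); and \eqref{mus} by inverting \eqref{defmu} via $I_s^2=\epsilon_s$. This all matches.

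The one point to correct is in part (c), precisely where you said the care is needed. The condition \eqref{propmu}, holding for \emph{all} $s$, does not put $\mu$ in the $[-1]$-component: writing $\mu(X,Y)=g(mX,Y)$ and using $g(AU,I_sV)=-g(I_sAU,V)$, the identity $\mu(I_sX,I_sY)=-\epsilon_s\mu(X,Y)$ is equivalent to $I_smI_s=\epsilon_sm$, i.e.\ $mI_s=I_sm$. So $m$ \emph{commutes} with every $I_s$ and $\mu=\mu_{[3]}$ (note the contrast with the 2-forms $\f_s$, which satisfy $\f_s(I_sX,I_sY)=-\epsilon_s\f_s(X,Y)$ only for the matching index and lie in the $[-1]$-component). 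With that corrected, your conclusion is exactly the intended one: for $n=1$ the space of symmetric endomorphisms commuting with all $I_s$ is one-dimensional, spanned by the identity, so the symmetric $[3]$-tensor $\mu$ is proportional to $g$, and trace-freeness forces $\mu=0$. Since you cite precisely this fact in your final sentence of (c), the slip is one of labeling rather than substance; the paper itself only asserts (c) in the paragraph following the proof, so your sketch, once the component is named correctly, is if anything more explicit than the source.
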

\begin{proof}
The equality \eqref{torha} yields
\[\mu_s(X,Y)=\epsilon_s\ta(\xi_s,I_sX,Y)=-\epsilon_s\ta(\xi_s,X,I_sY)=\epsilon_s\ta(\xi_s,I_sY,X)=\mu_s(Y,X)=-\epsilon_s\mu_s(I_sX,I_sY).
\]
Further, \eqref{tora}, \eqref{torha} and \eqref{defmu} together imply
\be\label{muij}
\mu_j(X,Y)-\epsilon_k\mu_j(I_kX,I_kY)+\epsilon_k\mu_i(I_kX,I_kY)-\mu_i(X,Y)=0.
\ee
The equality \eqref{idskew}, written with  the help of \eqref{torha}  in terms of $\mu_s$, reads 
\[
\epsilon_i\mu_i(X,Y)-\mu_i(I_iX,I_iY)-\epsilon_k\mu_i(I_jX,I_jY)-\epsilon_j\mu_i(I_kX,I_kY)=0,
\]
which, in view of \eqref{propmu}, leads  to
\be\label{muijk}
\mu_i(X,Y)=-\epsilon_k\mu_i(I_kX,I_kY)=-\epsilon_j\mu_i(I_jX,I_JY).
\ee
A combination of \eqref{muijk} with \eqref{muij} shows  that $\mu_i=\mu_j=\mu_k=\mu$. 
\end{proof}
If the dimension of M is seven, $n=1$, the conditions \eqref{xi} do not always hold. It follows from the proof of Theorem~\ref{main1} that in dimension seven the canonical pqc-connection exists, if we additionally assume the existence of a complementary to $H$ vertical space $V$, $TM=H\oplus V$, satisfying the properties \eqref{xi} of Lemma~\ref{fund}. In this case,  the  tensor $\mu=0$ and the torsion endomorphism $T(\xi,.,.)$ is symmetric. Henceforth, by a pqc-structure in dimension 7 we shall mean a pqc-structure satisfying \eqref{xi}.


We write Theorem~\ref{main1} in the following more explicit  form
\begin{thrm}\label{main11} {Let $(M, [g],\mathbb{PQ})$ be a pqc
manifold} of dimension $4n+3>7$ with a fixed metric $g\in[g]$. Then, there exists  a unique supplementary subspace $V$ to $H$ in
$TM$ satisfying \eqref{xi} and a unique connection
$\nabla$ with
torsion $T$ on $M$ preserving the splitting $H\oplus V$, the extended metric $g, \C g=0$ and the paraquaternionic structure on $H$, satisfying the  conditions \eqref{exh} and \eqref{exv}, where the connection 1-forms $\alpha_s$ are given by \eqref{aa1}, \eqref{conv1} and \eqref{convv} with $\lambda$ determined by \eqref{lamb}. 

The torsion is determined in \eqref{hort}, \eqref{hortor}, \eqref{torv1}, \eqref{torvv}, \eqref{ts}, \eqref{torskew} with the conditions in Lemma~\ref{lts} and Lemma~\ref{lta}.

Suppose that the Reeb vector fields exist in dimension seven  and denote $V=span\{\xi_1,\xi_2,\xi_3\}$ the vertical space to $H$. Then, all conclusions above are true in dimension seven.
\end{thrm}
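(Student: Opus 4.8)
The plan is to observe that Theorem~\ref{main11} is only an explicit repackaging of Theorem~\ref{main1} together with the propositions proved in the course of its proof, so that the work reduces to collecting the relevant formulas and then checking that everything survives the passage to dimension seven once \eqref{xi} is postulated.

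First I would assemble the construction for $n>1$. The unique supplementary subspace $V$ satisfying \eqref{xi} is produced in Proposition~\ref{fund}: the relations \eqref{x} fix the Reeb fields $\xi_s$ uniquely, while the trace identity \eqref{xi23} promotes \eqref{x} to the full system \eqref{xi}. The horizontal metric connection with $T(X,Y)_{|H}=0$ is the Koszul connection of Lemma~\ref{l1}; Proposition~\ref{fund} shows that it preserves the paraquaternionic structure with horizontal connection forms \eqref{aa1}, and the extension to $V$ by \eqref{exv} keeps both $H$ and $g_{|V}$ parallel. Lemma~\ref{lbiq}, applied to $Sp(n,\mathbb R)Sp(1,\mathbb R)$, furnishes the $V$-partial connection whose torsion lies in $(sp(n,\mathbb R)\oplus sp(1,\mathbb R))^{\perp}$, and Proposition~\ref{ffin} evaluates the remaining vertical connection forms \eqref{conv1}, \eqref{convv} and the vertical torsion \eqref{torv1}, \eqref{torvv} with $\lambda$ as in \eqref{lamb}. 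Since a metric connection is determined by its torsion, the formulas \eqref{hort}, \eqref{hortor}, \eqref{ts}, \eqref{torskew}, together with the tensors $\tau$ and $\mu$ of Propositions~\ref{lts} and \ref{lta}, pin down $\C$ completely; this is exactly the content of the first two displayed claims, and the uniqueness asserted at each stage is inherited from the corresponding lemma.

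The delicate part is dimension seven. The only place where $n>1$ entered is the trace step \eqref{xi23}, whose coefficient $4(n-1)$ vanishes at $n=1$; hence \eqref{xi} no longer follows and must be imposed, which is precisely the hypothesis that the Reeb fields exist and $V=\mathrm{span}\{\xi_1,\xi_2,\xi_3\}$. Granting \eqref{xi}, I would verify that each subsequent computation, namely Lemma~\ref{noo}, the connection forms \eqref{aa1}, \eqref{conv1}, \eqref{convv}, the vertical torsion of Proposition~\ref{ffin}, and the trace-freeness \eqref{tr-free}, used only \eqref{xi}, \eqref{exh}, \eqref{exv} and the algebraic identities \eqref{123}, \eqref{ijk}, so that all of them remain valid verbatim. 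The one assertion requiring a separate argument is the vanishing of the skew-symmetric torsion, $\mu=0$. By Proposition~\ref{lta} the tensor $\mu$ is symmetric, trace-free and satisfies \eqref{propmu}, $\mu(I_sX,I_sY)=-\epsilon_s\mu(X,Y)$. Realizing $\mu$ through the metric as a symmetric endomorphism $M$ and using $I_s^2=\epsilon_s$ with \eqref{param}, the relation \eqref{propmu} translates into $I_sMI_s=\epsilon_sM$, that is $I_sM=MI_s$ for every $s$; thus $M$ commutes with all three structures and is the $[3]$-component of a symmetric endomorphism. As recorded after the Casimir decomposition, for $n=1$ the symmetric endomorphisms commuting with all $I_s$ form a one-dimensional space spanned by the identity, so the trace-freeness of $\mu$ forces $M=0$. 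Hence $\mu=0$ in dimension seven, and by \eqref{mus} the part $\ta(\xi_s,\cdot,\cdot)$ vanishes, so $T(\xi,\cdot,\cdot)$ is symmetric and all conclusions persist. I expect this representation-theoretic collapse of $\mu$ to be the main obstacle, being the sole step not obtained by transcribing the $n>1$ computations.
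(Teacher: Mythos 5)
Your proposal is correct and follows essentially the same route as the paper: Theorem~\ref{main11} is indeed just the explicit repackaging of Theorem~\ref{main1} via Propositions~\ref{fund}, \ref{ffin}, \ref{lts} and \ref{lta}, with $n>1$ entering only through the trace step \eqref{xi23}, which is why \eqref{xi} must be postulated when $n=1$. Your representation-theoretic argument for $\mu=0$ in dimension seven (symmetric, trace-free, commuting with all $I_s$, hence proportional to the identity and therefore zero) is exactly the intended justification of part c) of Proposition~\ref{lta}, which the paper asserts but leaves implicit.
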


Applying \eqref{cl}, we obtain the next corollary.
\begin{cor}
The canonical pqc connection $\C$  and the Levi-Civita connection $\C^g$ of the extended metric $g$ are connected
by
\be\label{clcon}
\begin{split}
g(\C_X\xi_i,Y)=g(\C^g_X\xi_i,Y) +\frac14\Big[\tau(I_iX,Y)+\tau(X,I_iY)\Big]-\f_i(X,Y);\\
g(\C_XY,Z)=g(\C^g_XY,Z);\quad g(\C_{\xi_i}X,Y)=g(\C^g_{\xi_i}X,Y) +\mu(I_iX,Y)-\f_i(X,Y);\\
g(\C_{\xi_i}X,\xi_j)=g(\C^g_{\xi_i}X,\xi_j) +\frac12T(\xi_i,\xi_j,X);\quad 
g(\C_X\xi_i,\xi_j)=g(\C^g_X\xi_i,\xi_j) -\frac12T(\xi_i,\xi_j,X);\\
g(\C_{\xi_k}\xi_i,\xi_j)=g(\C^g_{\xi_k}\xi_i,\xi_j)-\frac12\lambda,\quad  
g(\C_{\xi_i}\xi_i,\xi_j)=g(\C^g_{\xi_i}\xi_i,\xi_j); \quad g(\C_{\xi_j}\xi_i,\xi_j)=g(\C^g_{\xi_j}\xi_i,\xi_j).
\end{split}
\ee
\end{cor}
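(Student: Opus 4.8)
The plan is to obtain all eight identities as direct specializations of the master formula \eqref{cl}, which already expresses the difference $g(\C_AB,C)-g(\C^g_AB,C)$ entirely in terms of the torsion $T(A,B,C)=g(T(A,B),C)$. Since both $\C$ and the Levi-Civita connection $\C^g$ are metric for the extended metric $g$, no additional structural input is required: the corollary is purely a matter of substituting into \eqref{cl} every admissible triple $(A,B,C)$ built from horizontal vectors $X,Y,Z$ and the Reeb fields $\xi_i,\xi_j,\xi_k$, and then reading off the right-hand side from the torsion data assembled while proving Theorem~\ref{main1}.

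First I would record the torsion dictionary in the normalization fixed by the vertical metric $g(\xi_s,\xi_t)=-\epsilon_s\delta_{st}$. From \eqref{hort} the purely horizontal torsion is vertical-valued, so $T(X,Y,Z)=0$ while $T(X,Y,\xi_s)=2\f_s(X,Y)$. The mixed component splits as $T(\xi_s,X,Y)=\tb(\xi_s,X,Y)+\ta(\xi_s,X,Y)$, whose symmetric part is given by \eqref{tau-1} and whose skew part is given by \eqref{mus}, and one has $T(\xi_s,X,\xi_t)=0$ as established just after \eqref{tv}. The purely vertical components are supplied by \eqref{torv1} and \eqref{torvv}. Throughout I also use the antisymmetry $T(A,B)=-T(B,A)$ in the first two slots.

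With this dictionary each identity follows from one choice of triple. The triple $(X,Y,Z)$ gives $g(\C_XY,Z)=g(\C^g_XY,Z)$ at once, since each cyclic term on the right of \eqref{cl} is a horizontal-horizontal torsion paired with a horizontal vector, hence zero. The triple $(X,\xi_i,Y)$ retains only the $\f_i$-contribution from $T(Y,X,\xi_i)$ and the \emph{symmetric} part of $T(\xi_i,\cdot,\cdot)$, which by \eqref{tau-1} yields exactly $\tfrac14\bigl[\tau(I_iX,Y)+\tau(X,I_iY)\bigr]$; dually the triple $(\xi_i,X,Y)$ isolates the \emph{skew} part, producing the $\mu(I_iX,Y)$ term via \eqref{mus} together with the same $\f_i$ term. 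The remaining identities come from $(\xi_i,X,\xi_j)$, $(X,\xi_i,\xi_j)$, $(\xi_k,\xi_i,\xi_j)$, $(\xi_i,\xi_i,\xi_j)$, and $(\xi_j,\xi_i,\xi_j)$: every term of the form $T(\xi_s,X,\xi_t)$ drops out, leaving the vertical torsion values, where $T(\xi_a,\xi_b,\xi_a)=0$ from \eqref{torv1}, the value $-\lambda$ from \eqref{torvv} feeds the $\xi_k\xi_i\xi_j$ identity, and $T(\xi_a,\xi_b,\xi_b)=-T(\xi_b,\xi_a,\xi_b)=0$ (by antisymmetry in the first two slots) kills the last two cases.

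The only genuine obstacle is bookkeeping: one must track the $\epsilon_s$ factors and the antisymmetry signs consistently when separating $T(\xi_i,\cdot,\cdot)$ into its symmetric and skew parts and when contracting against the vertical metric. Once \eqref{tau-1} and \eqref{mus} are inserted, the coefficients $\tfrac14$ and $1$ in front of $\tau$ and $\mu$ appear automatically, and the remaining signs in the vertical-direction identities are determined by the cyclic pattern in \eqref{torvv}; collecting the eight cases then completes the proof.
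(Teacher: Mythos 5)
Your proposal is correct and is exactly the paper's route: the paper's entire proof is the single remark that the corollary follows by substituting the computed torsion components into \eqref{cl}, and that is what you carry out, with the right torsion dictionary ($T(X,Y,\xi_s)=2\f_s(X,Y)$, $T(\xi_s,X,\xi_t)=0$, \eqref{tau-1}, \eqref{mus}, \eqref{torv1}, \eqref{torvv}). One caveat worth recording: if you execute the triple $(\xi_i,X,\xi_j)$ carefully you get $g(\C_{\xi_i}X,\xi_j)-g(\C^g_{\xi_i}X,\xi_j)=-\tfrac12T(\xi_i,\xi_j,X)$, with the \emph{same} sign as the companion identity for $g(\C_X\xi_i,\xi_j)$ --- as is forced by metricity of both connections together with $T(\xi_i,X,\xi_j)=0$ --- so the $+\tfrac12$ printed in the third line of \eqref{clcon} appears to be a sign typo in the statement rather than an obstacle to your argument.
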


\section{Basic Examples}
The $4n+4$ dimensional vector space $pQ^{n+1}=\mathbb R^{4n+4}$ has standard coordinates
$$\{t_1,x_1,y_1,z_1\dots,t_{n+1},x_{n+1},y_{n+1},z_{n+1}\}.$$

The standard paraquaternionic  structure
$(J_3,J_1,J_2)$ and the neutral metric $g$ are defined by
\begin{equation*}
\begin{split}
g(\dta,\dta)=g(\dxa,\dxa)=-g(\dya,\dya)=-g(\dza,\dza)=1;\\
J_3\dta=\dxa,\quad J_1\dta=\dya, \quad J_2\dta=\dza \quad for \quad  a=1,\dots,n.
\end{split}
\end{equation*}

\subsection{The paraquaternionic Heisenberg group} Define the 2-step nilpotent group $G(pH) = pQ^n\times Im(pQ)$ with
the group law given by $$(q', \omega') = (q_o, \omega_o)_o(q, \omega) = (q_o + q, \omega_o + \omega + 2 Im(q_o\bar q)),$$
where $q,q_o\in pQ^n$ and $\omega,\omega_o\in Im(pQ)$.

On $G(pH)$ we define the paraquaternionic contact form in paraquaternionic variables as follows
\be\label{pqh}
\tilde\Theta=(\tilde\Theta_3,\tilde\Theta_1\tilde\Theta_2)=\frac12(d\omega-qd\bar q+dq\bar q).
\ee
In real coordinates,  
the structure equations of $G(pH)$ are
\begin{equation*}
\begin{split}
d\tilde\Theta_3=2\Big[dt^a\wedge dx^a+dy^a\wedge dz^a\Big];\\ 
d\tilde\Theta_1=2\Big[dt^a\wedge dy^a+dx^a\wedge dz^a\Big];\\
d\tilde\Theta_2=2\Big[dt^a\wedge dz^a-dx^a\wedge dy^a\Big].\\
\end{split}
\end{equation*}
The left-invariant horizontal  vector fields $T_a, X_a=J_3T_a, Y_a=J_1T_a, Z_a=J_2T_a$ are given by
\begin{equation*}
\begin{split}
T_a=\dta  + 2x^a\dx+2y^a\dy+2z^a\dz;\qquad X_a=\dxa  - 2t^a\dx-2z^a\dy+2y^a\dz;\\
Y_a=\dya  + 2z^a\dx-2t^a\dy-2x^a\dz;\qquad Z_a=\dza  -2y^a\dx+2x^a\dy-2t^a\dz.
\end{split}
\end{equation*}
The horizontal metric of signature (2n,2n) is defined by
$$g(T_a,T_a)=g(X_a,X_a)=-g(Y_a,Y_a)=-g(Z_a,Z_a).$$
The central  (left-invariant vertical) Reeb vector  fields are
$$\xi_3=2\dx, \qquad \xi_1=2\dy, \qquad \xi_2=2\dz.
$$
A small calculation shows the following commutation relations
$$[J_iT_a,T_a]=2\xi_i,\qquad [J_iT_a,J_jT_a]=-2\xi_k.
$$
It is easy to verify that the left-invariant flat connection on $G(pH)$ coincides with the canonical pqc connection of the pqc manifold $(G(pH),\tilde\Theta)$.
This flat pqc structure on the para-quaternionic
Heisenberg group $G(pH)$ turns out to be (locally) the
unique pqc structure with flat canonical connection according to
Theorem~\ref{van} below.

By a hyperbolic rotation of the 1-forms, defining the
horizontal space of $G(pH)$, we obtain an
equivalent pqc-structure (with the same canonical connection). It is
possible to introduce a different not two step nilpotent group
structure on  $ pQ^n\times Im(pQ)$ with
respect to which the rotated forms are left invariant (but not
parallel!). Following is an explicit description of
this construction in dimension seven.

Consider the seven dimensional paraquaternionic Heisenberg group described above.
We  define a
non-left-invariant pqc structure on this manifold as follows. For
each $c\in \mathbb{R}$, let
\begin{equation*} 
\begin{aligned}
& \gamma^1= dt^1, \quad \gamma^4= dz^1, \quad \gamma^7= \tilde\Theta_3\\
&\gamma^2= \sinh(cz^1)\, dx^1 + \cosh(cz^1)\, dy^1,\quad \gamma^3=
\cosh(cz^1)\, dx^1 + \sinh(cz^1)\, dy^1,\\
& \gamma^5= \sinh(cz^1)\, \tilde\Theta_1 + \cosh(cz^1)\, \tilde\Theta_2,\quad \gamma^6=
\cosh(cz^1)\, \tilde\Theta_1 + \sinh(cz^1)\, \tilde\Theta_2.
\end{aligned}
\end{equation*}
A direct calculation shows that for  $c\not=0$ the forms $\{
\gamma^l,\ 1 \leq l\leq 7\}$ define a unique Lie algebra
$\mathfrak {l_0}$ with the following structure equations
\begin{equation}  \label{ex01}
\begin{aligned}
&d\gamma^1=0,\quad d\gamma^2=-c\gamma^{34},\quad d\gamma^3=-c\gamma^{24},\quad d\gamma^4=0,\\
&d\gamma^5=2\gamma^{12}+2\gamma^{34}+c\gamma^{46},\quad
d\gamma^6=2\gamma^{13}+2\gamma^{24}+c\gamma^{45}, \\&
d\gamma^7=2\gamma^{14}-2\gamma^{23}.
\end{aligned}
\end{equation}
In particular, $\mathfrak {l_0}$ is an indecomposable solvable Lie algebra.

Let $e_l, 1 \leq l\leq 7$ be the left invariant vector fields dual
to the 1-forms ${\gamma^l, 1 \leq l\leq 7}$. The (global) flat pqc
structure on $ pQ^n\times Im(pQ)$ can also be
described as follows $\eta_3=\gamma^5, \quad \eta_1=\gamma^6,
\quad \eta_2=\gamma^7, \quad H=span\{\gamma^1,\dots, \gamma^4\}$,
$\omega_3=d\gamma^5_{|_H}=\gamma^{12}+\gamma^{34}, \quad
-\omega_1=d\gamma^6_{|_H}=\gamma^{13}+\gamma^{24}, \quad
-\omega_2=d\gamma^7_{|_H}=\gamma^{14}-\gamma^{23}$. 

It is easy to derive
from \eqref{ex01} that  vector fields $\xi_3=e_5$, $\xi_1=e_6$,
$\xi_2=e_7 $ satisfy the compatibility conditions
\eqref{xi} and therefore the canonical connection exists and
$\xi_s$ are the Reeb vector fields.

Let $(L_0,\eta,pQ)$ be the simply connected Lie
group with Lie algebra $\mathfrak l_0$, equipped with the left
invariant pqc structure $(\eta,pQ)$ defined above. Then, as
a consequence of the above construction, the torsion endomorphism
and the curvature of the canonical connection are identically zero
but the basis $\gamma_1,\dots,\gamma_7$ is not parallel.  The
$Sp(1,\mathbb R)$-connection 1-forms  in the basis $\gamma^1, \dots,
\gamma^7$ are given by $ \alpha_3=0, \quad \alpha_1=0, \quad
\alpha_2=-c\gamma^4. $ 
\subsubsection{An embeding of the paraquaternionic Heisenberg group $G(pH)$}Consider the hypersurface 
$$\Sigma\subset pH^n\times pH:\Sigma=(q',p')\in pH\times pH:Re(p')=-|q'|^2.$$
Clearly, $\Sigma$ is the $0$-level set of $\rho=|q'|^2+t$ and 
\be\label{ppo}d\rho=q'd\bar{q'}+dq'\bar{q'}+dt=2(t^adt^a+x^adx^a-y^ady^a-z^adz^a)+dt.
\ee
Apply the standard para quaternionic structure $J_3,J_1,J_2$ on $\mathbb R^{4n+4}$, induced by the multiplication on the right by the para quaternions $r_3,r_1,r_2\in pH^{n+1}$
to \eqref{ppo}  and compare the result with \eqref{pqh} to get
\begin{equation*}
\begin{split}
J_3d\rho
=2\tilde\Theta_3;\qquad 
J_1d\rho
=2\tilde\Theta_1;\qquad 
J_2d\rho
=2\tilde\Theta_2.
\end{split}
\end{equation*}
We identify $G(pH)$ with $\Sigma$ by $$(q',\omega')\rightarrow (q',p'=-|q'|^2+\omega').$$ Since $dp'=-q'd\bar{q'}-dq'\bar{q'}+d\omega'$, we  write $\tilde\Theta=\frac12(d\omega-q'd\bar{q'}+dq'\bar{q'})=\frac12dp'+dq'\bar{q'}$.

Taking into account that $\tilde\Theta$ is pure imaginary  the last equation takes the form
\be\label{phsp}\tilde\Theta=\frac14(dp'-d\bar{p'})+\frac12(dq'\bar{q'}-q'd\bar{q'}).
\ee

\subsection{Para 3-Sasakian manifolds} We recall the definition of para 3-Sasakian spaces \cite{Swann,ACGL,AK,AK0}.
\begin{dfn}
A $4n+3$-dimensional pseudo-Riemannian manifold $(PS,g)$ with a metric of signature $(2n+1,2n+2)$ is said to be a para 3-Sasakian manifold if it admits three orthogonal Kiling vector fields $\xi_1,\xi_2,\xi_3$ of length squared $g(\xi_s,\xi_s)=-\epsilon_s$ with commutators
\be\label{3sasxi}
[\xi_i,\xi_j]=2\epsilon_k\xi_k
\ee
and the endomorphisms $\Phi_iB=\C^g_B\xi_i$ satisfy
\be\label{phi}(\C^g_A\Phi_i)B=g(\xi_i,B)A-g(A,B)\xi_i.
\ee
\end{dfn}
\noindent
The Kozul formula 
and \eqref{3sasxi} give 
$
\Phi_i\xi_j=-\epsilon_k\xi_k=-\Phi_j\xi_i, \quad \Phi_s\xi_s=0,
$ 
which combined with  \eqref{phi} yields  
\begin{equation*}
\begin{split}
\Phi_s^2A=\epsilon_sA+g(A,\xi_s)\xi_s;\\
\epsilon_k\Phi_k=-\Phi_i\Phi_jA+g(A,\xi_j)\xi_i=\Phi_j\Phi_iA-g(A,\xi_i)\xi_j.
\end{split}
\end{equation*}
It is known that these structures are Einstein with scalar curvature $(4n+3)(4n+2)$  \cite{Swann,ACGL,AK,AK0}.

Consider the 1-forms $\eta_s$ dual to the Killing vector fields $\xi_s$ via the metric, i.e
\[\eta_s(A)=-\epsilon_sg(A,\xi_s).\]
and define $H$ to be  the kernel of a 1-form
$\eta=(\eta_1,\eta_2,\eta_3)$ with values in $\mathbb{R}^3, H=\cap_{s=1}^3 Ker\, \eta_s$. It is easy to see that the restrictions of the metric $g_{_{|H}}$ and of the  endomorphisms $\Phi_s$ on $H$, $I_s=\Phi_s{_{|H}}$ satisfy \eqref{paraq} and \eqref{param} and define a paraquaternionic structure on $H$.

The Killing conditions imply
\be\label{deta}
d\eta_s(A,B)=-2\epsilon_sg(\C^g_A\xi_s,B)=-2\epsilon_sg(\Phi_sA,B)=2\epsilon_sg(A,\Phi_sB).
\ee 
which shows that  compatibility condition $-2\epsilon_sg(I_sX,Y)\ =\ d\eta_s(X,Y), \quad X,Y\in H$ holds. Thus, we have a paraquaternionic contact structure on a para 3-Sasakian spaces and ortogonal splitting $H\oplus\{\xi_i,\xi_2,\xi_3\}$ of the tangent bundle.

The comutators \eqref{3sasxi} yield
\be\label{co}
\begin{split}
d\eta_s(\xi_t,X)=d\eta_s(\xi_t,\xi_s)=0;\qquad d\eta_i(\xi_j,\xi_k)=-2\epsilon_i, \quad d\eta_i=-2\epsilon_i\f_i-2\epsilon_i\eta_j\wedge\eta_k;\\
d\f_i=2\epsilon_j\f_j\wedge\eta_k-2\epsilon_k\f_k\wedge\eta_j.
\end{split}
\ee
Hence, the conditions \eqref{xi} of Theorem~\ref{main1} hold and therefore there exists a canonical connection $\C$ on any para 3-Sasakian space. We have the following proposition.
\begin{prop}\label{p3sas}
The torsion endomorphisms of any para 3-Sasakian structure vanishes, $\tau=\mu=0$.
\end{prop}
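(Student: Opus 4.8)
The plan is to prove the two assertions $\tau=0$ and $\mu=0$ separately, exploiting that on a para $3$-Sasakian manifold the three Killing fields $\xi_1,\xi_2,\xi_3$ satisfy \eqref{xi} and hence are exactly the Reeb vector fields of the associated pqc structure, so the vertical torsion of the canonical connection $\C$ is computed with respect to them and the structure equations \eqref{co} are available. The symmetric part is immediate: since each $\xi_t$ is Killing, $\mathbb L_{\xi_t}g=0$, so \eqref{ts} gives $\tb(\xi_t,X,Y)=\tfrac12(\mathbb L_{\xi_t}g)(X,Y)=0$ for every $t$, and substituting this into the defining formula \eqref{deftau} yields $\tau\equiv 0$ at once.

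For the skew part the key step is to compute the Lie derivatives $\mathbb L_{\xi_t}I_s$ along $H$ and observe that each of them lies in $sp(1,\mathbb R)=\mathrm{span}\{I_1,I_2,I_3\}$. I would argue this in either of two equivalent ways. The conceptual route uses that a Killing field is affine, so $\mathbb L_{\xi_t}$ commutes with $\nabla^g$; together with $I_s=\Phi_s|_{H}=(\nabla^g\xi_s)_{|H}$ and the bracket relations \eqref{3sasxi} this gives $(\mathbb L_{\xi_i}\Phi_j)A=\nabla^g_A(\mathbb L_{\xi_i}\xi_j)=\nabla^g_A(2\epsilon_k\xi_k)=2\epsilon_k\Phi_kA$, whence on $H$ one gets $\mathbb L_{\xi_i}I_i=0$, $\mathbb L_{\xi_i}I_j=2\epsilon_kI_k$ and $\mathbb L_{\xi_i}I_k=-2\epsilon_jI_j$. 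The computational route, closer to the paper's style, feeds \eqref{co} into Cartan's formula $\mathbb L_{\xi_t}\f_s=\xi_t\lrcorner d\f_s+d(\xi_t\lrcorner\f_s)$; the second term vanishes since $\f_s$ is horizontal, and converting $\mathbb L_{\xi_t}\f_s$ into $g((\mathbb L_{\xi_t}I_s)\,\cdot\,,\cdot)$ via $\mathbb L_{\xi_t}g=0$ recovers the same three identities. In all cases $\mathbb L_{\xi_t}I_s$ is a linear combination of the $I_r$, i.e. it is $sp(1,\mathbb R)$-valued.

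Finally I would substitute these into \eqref{torskew}. Because every $\mathbb L_{\xi_t}I_s$ lies in $sp(1,\mathbb R)$, the raw skew $[-1]$-expression produced there is itself $sp(1,\mathbb R)$-valued (it comes out proportional to $\f_t$). But $\ta(\xi_t,\cdot,\cdot)$ is, by construction, the $(sp(n,\mathbb R)+sp(1,\mathbb R))^{\perp}$-component of that expression (cf. \eqref{ort}), and $sp(1,\mathbb R)\cap(sp(n,\mathbb R)+sp(1,\mathbb R))^{\perp}=\{0\}$; hence $\ta(\xi_t,\cdot,\cdot)=0$. By \eqref{defmu} and \eqref{mus} this reads $\mu(I_tX,Y)=0$, and invertibility of $I_t$ gives $\mu=0$, which completes the proof together with $\tau=0$.

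The hard part will be the bookkeeping in the last step, where one must be careful that the $sp(1,\mathbb R)$-subtraction already built into \eqref{torskew} removes the whole $\f_t$-contribution, i.e. that the trace term there is read as the full $sp(1,\mathbb R)$-projection. The observation that short-circuits the sign-chasing through the identities $g(I_rX,I_sY)=\pm\epsilon_i\f_i(X,Y)$ (coming from \eqref{paraq} and \eqref{param}) is precisely that the entire skew $[-1]$-part of $T(\xi_t,\cdot,\cdot)$ is forced into $sp(1,\mathbb R)$ as soon as $\mathbb L_{\xi_t}I_s\in sp(1,\mathbb R)$; orthogonality then finishes the argument without any further computation.
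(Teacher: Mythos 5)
Your proposal is correct and follows essentially the same route as the paper: $\tau=0$ comes from the Killing condition via \eqref{ts} and \eqref{deftau}, and $\mu=0$ comes from showing that each $\mathbb L_{\xi_t}I_s$ is $sp(1,\mathbb R)$-valued (your Cartan-formula computation with \eqref{co} is exactly the paper's, and your affine-Killing argument is a valid alternative). The only cosmetic difference is the last step, where the paper extracts $\mu=0$ from the trace-free part of \eqref{n1} while you read it off from \eqref{torskew} via the orthogonality $sp(1,\mathbb R)\cap(sp(n,\mathbb R)+sp(1,\mathbb R))^{\perp}=\{0\}$; both finishes are immediate.
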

\begin{proof}
Since the vector fields $\xi_s$ are Killing, \eqref{ts} shows that the symmetric part of the torsion endomorphism vanish, $0=(\mathbb L_{\xi_s}g)(X,Y)=2\tb(\xi_s,X,Y)$ and therefore $\tau=0$. The general  formula 
\[(\mathbb L_{\xi_s}\f_t)(X,Y)=(\mathbb L_{\xi_s}g)(I_tX,Y)+g((\mathbb L_{\xi_s}I_t)X,Y)
\]
and the Cartan identity \eqref{car} together with \eqref{co} yield
\[g((\mathbb L_{\xi_j}I_i)X,Y)=(\mathbb L_{\xi_j}\f_i)(X,Y)=(\xi_j\lrcorner d\f_i)(X,Y)=-2\epsilon_k\f_k
\]
and the trace-free part of \eqref{n1} implies $\mu=0$.
\end{proof}
An equivalent definition of para-3-Sasakian spaces  is that the cone metric $dt^2+t^2g$ on the product $PS\times \mathbb R^+$ is hypersymplectic (or hyper para Kaehler) \cite{Swann,ACGL,AK,AK0}, i.e. it has holonomy contained in $Sp(n+1,\mathbb R)$. Indeed, the three 2-forms defined by 
\be\label{cone}
F_i=t^2\f_i+t^2\eta_j\wedge\eta_k+\epsilon_it\eta_i\wedge dt
\ee
constitute a hyper para quaternionic structure on the cone and 
\be\label{dfcone}
dF_i=tdt\wedge(2\f_i+2\eta_j\wedge\eta_k+\epsilon_id\eta_i)+t^2d(\f_i+\eta_j\wedge\eta_k)
\ee
Using  \eqref{co} one checks that these forms are closed, $dF_i=0$. After applying the Atiyah-Hitchin  computations from \cite{AH}, one  sees that we have a hypersymplectic (hyper para-Kaehler) structure on the cone.

\subsubsection{The para 3-Sasakian pseudo sphere} An important  explicit example is the  pqc-structure on the para 3-Sasakian pseudosphere. The para 3-Sasakian structure on the pseudosphere (hyperboloid) $pS^{4n+3}=\{|q|^2+|p|^2=1\}\subset pH^n\times pH$ is inherited from the standard flat hypersymplectic structure on $\mathbb R^{4n+4}= pH^n\times pH$.  In paraquaternionic variables,  the  pqc 1-form on the pseudo sphere $pS^{4n+3}=\{|q|^2+|p|^2=1\}\subset pH^n\times pH$ is defined as follows
\begin{equation}\label{p3sas1}
\tilde\eta=dq.\bar q+dp.\bar p-q.d\bar q-p.d\bar p.
\end{equation}
It is shown in \cite{CIZ} that the praquaternionic Heisenberg group $G(pH)$ and the para 3-Sasakian pseudo-sphere $pS^{4n+3}$ are locally paraquaternionic contact conformally equivalent. 

We  explain  briedly the construction from \cite{CIZ}. 
Consider  the map from the pseudo-sphere $pS^{4n+3}$ minus the points $\Sigma_0$,  
$$\Sigma_0=(q,p)\in pS^{4n+3}: |p-1|^2=(t-1)^2+x^2-y^2-z^2=0$$
to the paraquaternionic Heisenberg group $G(pH)\cong\Sigma$, defined by 
\[\mathbb C:\Big(pS^{4n+3}-\Sigma_0\Big)\rightarrow\Sigma,\quad  (q',p')=\mathbb C\Big((q,p)\Big), \quad q'=(p-1)^{-1}q,\quad p'=(p-1)^{-1}(p+1).
\]

The inverse map  $(q,p)=\mathbb C^{-1}\Big((q',p')\Big)$ is given by
\[q=2(p'-1)^{-1}q',\quad  p=(p'-1)^{-1}(p'+1).
\]
It is shown in \cite[Section~3.3]{CIZ} that 
\[2\mathbb C^*\tilde\Theta
=\frac1{|p-1|^2}\lambda.\tilde\eta.\bar{\lambda},\]
where $\lambda=\frac{|p-1|}{p-1}$ is a unit paraquaternion, $\tilde\eta$ is the  paraquaternionic contact form on the pseudo-sphere $pS^{4n+3}$, given by \eqref{p3sas1} and $\tilde\Theta$ is the paraquaternionic contact form on $G(pH)$ written in \eqref{phsp}.

\section{The curvature of the canonical connection}
The main purpose of this section is to show that the curvature of the canonical pqc 
connection is completely determined by its restriction to $H$ and the  torsion endomorphism $\tau$ and $\mu$.

Let $R=[\C,\C]-\C_{[,]}$ be the curvature tensor of $\C$. We denote the curvature of type (0,4) by the same letter, $R(A,B,C,D)=g(R(A,B)C,D), \quad A,B,C,D\in\Gamma(TM)$. 

We define the following Ricci-type contractions:
\begin{gather*}
\rho_s(B,C)=\frac1{4n}R(B,C,e_a,I_se_a)=-\rho_s(C,B), \quad The \quad Ricci\quad 2-forms;\\
Ric(B,C)=R(e_a,B,C,e_a), \quad The \quad pqc-Ricci\quad tensor; \\
Scal=Ric(e_a,e_a), \quad The \quad pqc-scalar\quad curvature;\\
\zeta_s(B,C)=\frac1{4n}R(e_a,B,C,I_se_a); \qquad \varrho_s(B,C)=\frac1{4n}R(e_a,I_se_a,B,C)=-\varrho_s(C,B).
\end{gather*}
The curvature operator $R(B,C)$ preserves the pqc structure on $M$  since the connection $\C$ preserves it. In particular, $R(B,C)$ preserves the splitting $H\oplus V$ and the paraquaternionic structure on $H$, so $R(B,C)\in sp(n,\mathbb R)\oplus sp(1,\mathbb R)$ on $H$. Denote by $R^0$ the $sp(n,\mathbb R)$-part of the curvature, simple calculations yield the decomposition
\be\label{spn1}
R(B,C)X=R^0(B,C)X-\sum_{s=1}^3\epsilon_s\rho_s(B,C)I_sX
\ee
\begin{lemma}
On a pqc manifold the next identities hold
\begin{gather}\label{rjr}
R(B,C)I_iX-I_iR(B,C)X=2\epsilon_i\Big[\rho_j(B,C)I_kX-\rho_k(B,C)I_jX \Big];\\
\label{rhoal}
\rho_i=\frac12\Big[\epsilon_kd\alpha_i-\epsilon_j\alpha_j\wedge\alpha_k \Big];\\\label{rhov}
R(B,C)\xi_i=-2\epsilon_i\rho_k(B,C)\xi_j+2\epsilon_i\rho_j(B,C)\xi_k;
\end{gather}
where the connection 1-forms $\alpha_s$ are given by \eqref{aa1}, \eqref{conv1} and \eqref{convv}.
\end{lemma}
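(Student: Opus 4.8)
The plan is to derive all three identities from the single decomposition \eqref{spn1} of the curvature into its $sp(n,\mathbb R)$- and $sp(1,\mathbb R)$-parts, together with the structure equations of the induced $sp(1,\mathbb R)$-connection \eqref{exh}, \eqref{exv}.

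First I would establish \eqref{rjr} by a direct computation of the commutator $R(B,C)I_iX - I_iR(B,C)X$ on $H$. Writing $R(B,C)$ via \eqref{spn1}, the $sp(n,\mathbb R)$-part $R^0(B,C)$ commutes with every $I_s$ (it is, by definition, the part of the curvature landing in $sp(n,\mathbb R)$, the centralizer of the $I_s$), and so it cancels from the commutator, leaving only $-\sum_{s}\epsilon_s\rho_s(B,C)\,(I_sI_i-I_iI_s)$. Evaluating the commutators $I_sI_i-I_iI_s$ with the paraquaternionic multiplication table \eqref{paraq} (only $s=j$ and $s=k$ contribute) and simplifying the resulting $\epsilon$-products by means of $\epsilon_i\epsilon_j=-\epsilon_k$ from \eqref{eps} yields exactly the right-hand side of \eqref{rjr}. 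This step is routine once the index and sign bookkeeping is organized.

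Next, for \eqref{rhoal} I would observe that the left-hand side of \eqref{rjr} is precisely the action of the curvature of the induced $sp(1,\mathbb R)$-connection \eqref{exh} on the section $I_i$ of $\mathbb{PQ}$, that is $R(B,C)I_i = R(B,C)I_i-I_iR(B,C)$ viewed as the curvature of the endomorphism bundle. Reading off the connection matrix $\omega$ from \eqref{exh} (so that $\omega_i{}^j=\epsilon_k\alpha_k$, $\omega_i{}^k=\alpha_j$, and cyclically for the other indices), I would compute the curvature two-forms from the structure equation $\Omega_i{}^t=d\omega_i{}^t+\omega_u{}^t\wedge\omega_i{}^u$, obtaining $\Omega_i{}^k=d\alpha_j-\epsilon_j\,\alpha_i\wedge\alpha_k$ after using \eqref{eps}. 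Comparing the $I_k$-coefficient of $R(B,C)I_i=\Omega_i{}^t(B,C)I_t$ with the $I_k$-coefficient in \eqref{rjr}, namely $2\epsilon_i\rho_j$, gives $2\epsilon_i\rho_j=d\alpha_j-\epsilon_j\,\alpha_i\wedge\alpha_k$; a single cyclic relabeling $(i,j,k)\mapsto(k,i,j)$ together with multiplication by $\epsilon_k$ then produces \eqref{rhoal}. The $I_j$-coefficient, which yields $\Omega_i{}^j=-2\epsilon_i\rho_k=\epsilon_kd\alpha_k+\alpha_i\wedge\alpha_j$, gives the same relation and serves as a consistency check.

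Finally, \eqref{rhov} is immediate once \eqref{rhoal} is in hand: the vertical connection \eqref{exv} carries exactly the same connection matrix as \eqref{exh} under the identification $\xi_i\leftrightarrow I_i$, so the curvature two-forms on $V$ are the same $\Omega_i{}^t$. Substituting $\Omega_i{}^j=-2\epsilon_i\rho_k$ and $\Omega_i{}^k=2\epsilon_i\rho_j$ into $R(B,C)\xi_i=\Omega_i{}^t(B,C)\xi_t$ gives \eqref{rhov}. The only genuine obstacle throughout is the consistent handling of the cyclic indices $(i,j,k)$ and the signs $\epsilon_s$; none of the steps requires any new geometric input beyond the decomposition \eqref{spn1} and the explicit connection equations \eqref{exh}, \eqref{exv}.
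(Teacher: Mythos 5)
Your proposal is correct and follows essentially the same route as the paper: \eqref{rjr} is obtained directly from the decomposition \eqref{spn1} using that $R^0$ commutes with the $I_s$, \eqref{rhoal} by computing the curvature of the induced $sp(1,\mathbb R)$-connection from \eqref{exh} and comparing coefficients with \eqref{rjr}, and \eqref{rhov} by the same computation on $V$ via \eqref{exv}. The only cosmetic difference is that you package the middle step as the structure equation $\Omega=d\omega+\omega\wedge\omega$ for the connection matrix, whereas the paper expands $\nabla_B\nabla_C I_i-\nabla_C\nabla_B I_i-\nabla_{[B,C]}I_i$ directly; the signs and the final cyclic relabeling check out.
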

\begin{proof}
The identity \eqref{rjr} follows directly from \eqref{spn1} since $R^0$ commutes with all $I_s$.

Furthermore, the identities \eqref{exh} imply
\begin{multline*}
R(B,C)I_i-I_iR(B,C)=\C_B\C_CI_i-\C_B\C_CI_i-\C_{[B,C]}I_i\\=\C_B[\alpha_j(C)I_k+\epsilon_k\alpha_k(C)I_j]-\C_B[\alpha_j(C)I_k+\epsilon_k\alpha_k(C)I_j]-[\alpha_j([B,C])I_k+\epsilon_k\alpha_k([B,C])I_j\\=(\epsilon_kd\alpha_k+\alpha_i\wedge\alpha_j)(B,C)I_j+(d\alpha_j+\epsilon_j\alpha_k\wedge\alpha_i)(B,C)I_k
\end{multline*}
which compared with \eqref{rjr} proves  \eqref{rhoal}.  

Similarly, using \eqref{exv} we obtain 
$
R(B,C)\xi_i=(\epsilon_kd\alpha_k+\alpha_i\wedge\alpha_j)(B,C)\xi_j+(d\alpha_j+\epsilon_j\alpha_k\wedge\alpha_i)(B,C)\xi_k$ by applying \eqref{rhoal} we get \eqref{rhov}.
\end{proof}
 An application of \eqref{liom2} to the horizontal part of \eqref{rhoal} implies the next
\begin{cor}
The  Ricci 2 forms restricted to $H$ are  given by
\[\rho_i(X,Y)=\epsilon_i(\mathbb{L}_{\xi_k}\f_j)(X,Y)-\epsilon_id\eta_j(\xi_k,\xi_j)\f_j(X,Y)-\epsilon_jd\eta_j(\xi_k,\xi_i)\f_i(X,Y).
\]
\end{cor}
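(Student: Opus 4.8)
The plan is to read $\rho_i$ on $H$ directly off the structure equation \eqref{rhoal} and then to rewrite the resulting combination of the connection $1$-forms as a Lie derivative by invoking a cyclically permuted instance of \eqref{liom2}. The whole argument is a substitution; the content is the bookkeeping of the signs $\epsilon_s$.

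First I would evaluate \eqref{rhoal}, namely $\rho_i=\frac12\big[\epsilon_k d\alpha_i-\epsilon_j\alpha_j\wedge\alpha_k\big]$, on a pair of horizontal vectors $X,Y\in H$. Since \eqref{rhoal} is an identity of $2$-forms on $M$, this restriction is harmless and merely extracts the horizontal component. The problem thereby reduces to expressing the horizontal $2$-form $\epsilon_k d\alpha_i-\epsilon_j\alpha_j\wedge\alpha_k$ through the structure tensors, where the connection forms $\alpha_s$ on $H$ are the ones given by \eqref{aa1}.

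Next I would apply \eqref{liom2} after the cyclic relabelling $(i,j,k)\mapsto(j,k,i)$. This sends the left-hand side to $2(\mathbb{L}_{\xi_k}\f_j)$ and the right-hand side to
\[
-\epsilon_j\, d\alpha_i+\epsilon_k\,\alpha_j\wedge\alpha_k+2\,d\eta_j(\xi_k,\xi_j)\f_j-2\epsilon_k\, d\eta_j(\xi_k,\xi_i)\f_i,
\]
so that the pure connection-form part is exactly $-\epsilon_j d\alpha_i+\epsilon_k\alpha_j\wedge\alpha_k$, conjugate to the combination occurring in \eqref{rhoal}. Multiplying this permuted identity through by $\epsilon_i$ and using the relations $\epsilon_i\epsilon_j=-\epsilon_k$ from \eqref{eps} (hence also $\epsilon_i\epsilon_k=-\epsilon_j$) converts $-\epsilon_j d\alpha_i+\epsilon_k\alpha_j\wedge\alpha_k$ into $\epsilon_k d\alpha_i-\epsilon_j\alpha_j\wedge\alpha_k$, which is precisely $2\rho_i$. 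Carrying the two remaining $\f$-terms through the same $\epsilon$-conversions yields $\rho_i=\epsilon_i(\mathbb{L}_{\xi_k}\f_j)-\epsilon_i d\eta_j(\xi_k,\xi_j)\f_j-\epsilon_j d\eta_j(\xi_k,\xi_i)\f_i$, the asserted formula.

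The only genuine obstacle is the $\epsilon$-bookkeeping: one must keep the cyclic permutation consistent across all six slots of \eqref{liom2} simultaneously and track the quadratic relations among the $\epsilon_s$ accurately, since a single sign slip propagates into the two lower-order terms and spoils the coefficients of $\f_i$ and $\f_j$. Once the correctly permuted form of \eqref{liom2} is in hand, everything else is an immediate substitution.
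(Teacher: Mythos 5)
Your proposal is correct and follows exactly the route the paper indicates: restrict \eqref{rhoal} to $H$ and substitute the cyclically permuted identity \eqref{liom2}, with the $\epsilon$-relations $\epsilon_i\epsilon_j=-\epsilon_k$, $\epsilon_i\epsilon_k=-\epsilon_j$ converting $-\epsilon_j d\alpha_i+\epsilon_k\alpha_j\wedge\alpha_k$ into $2\rho_i$ after multiplication by $\epsilon_i$. The sign bookkeeping in your computation checks out and reproduces the stated coefficients of $\f_i$ and $\f_j$.
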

\subsection{The first Bianchi identity and the Ricci-type tensors}
In this section we describe the horisontal Ricci tensors in terms of the torsion
endomorphism of the canonical pqc connection and pqc scalar curvature based on 
Lemma~\ref{lts}, Lemma~\ref{lta}  and the first Bianchi identity.

Let $b(A,B,C)$ be the Bianchi projector,
\begin{equation}  \label{bian01}
b(A,B,C):=\sum_{(A,B,C)}\Bigl\{ (\nabla_AT)(B,C) +
T(T(A,B),C)\Bigr\},
\end{equation}
where $\sum_{(A,B,C)}$ denotes the cyclic sum over the three tangent vectors $A,B,C$.

With this notation the first Bianchi identity reads as
follows
\begin{equation}  \label{bian1}
\sum_{(A,B,C)}\Bigl\{R(A,B,C,D)\Bigr\}= g\Bigl ( b(A,B,C), D\Bigr )=b(A,B,C,D).
\end{equation}

The curvature of a metric connection  is skew-symmetric with respect to
the last two arguments, $R(A,B,C,D)=-R(A,B,D,C)$. It can be derived from the first Bianchi
identity \eqref{bian1} that (see  \cite{B})
\begin{multline}\label{zam}
2R(A,B,C,D)-2R(C,D,A,B)=\ b(A,B,C,D) + \ b(B,C,D,A) \\\ -b(A,C,D,B)-\ b(A,B,D,C).
\end{multline}
\begin{thrm}\label{sixtyseven}
On a $(4n+3)$-dimensional pqc manifold, the horizontal Ricci
tensors $Ric$ and $\zeta_s(X,I_sY)$ are symmetric, the
horizontal Ricci tensors $\rho_s(X,I_sY), \varrho_s(X,I_sY)$ are
symmetric (1,1) tensors with respect to $I_s$,  $$\rho_s(X,I_sY)=-\rho_s(I_sX,Y),\quad \varrho_s(X,I_sY)=-\varrho(I_sX,Y)$$  and the next
formulas hold

\begin{gather}\label{ricci}
Ric(X,Y)  = \frac{Scal}{4n}g(X,Y)+(2n+2)\tau(X,Y)
+(4n+10)\mu(X,Y);\\\label{ricciformf}
 \rho_s(X,I_sY)  = \epsilon_s\frac{Scal}{8n(n+2)}g(X,Y)
+\frac12\Bigl[\epsilon_s\tau(X,Y)-\tau(I_sX,I_sY)\Bigr]+2\epsilon_s\mu(X,Y);\\
\label{riccitau}
 \varrho_s(X,I_sY)   = \epsilon_s\frac{Scal}{8n(n+2)}g(X,Y)
+\frac{n+2}{2n}\Bigl[\epsilon_s\tau(X,Y)-\tau(I_sX,I_sY)\Bigr];
\\\label{riccizeta}
 -\epsilon_s \zeta_s(X,I_sY) \  =\ \frac{Scal}{16n(n+2)}g(X,Y)+
\frac{2n+1}{4n}\tau(X,Y) -\epsilon_s\frac{1}{4n}\tau(I_sX,I_sY)+\frac{2n+1}{2n}\mu(X,Y);
\\\label{scaltor}
Scal\  =\
-8n(n+2)g(T(\xi_1,\xi_2),\xi_3)=8n(n+2)\lambda;\\\label{torv}
 T(\xi_{i},\xi_{j}) =\epsilon_k\frac
{Scal}{8n(n+2)}\xi_{k}-[\xi_{i},\xi_{j}]_{H};
\\\label{vertor}
 T(\xi_i,\xi_j,I_kX)
=\rho_k(I_jX,\xi_i)=-\rho_k(I_iX,\xi_j)=\omega_k([\xi_i,\xi_j],X);
\\\label{ricciformv}
-\epsilon_i\rho_{i}(X,\xi_{i})\  =\ -\frac {X(Scal)}{32n(n+2)} + \frac
12\, \left
(-\rho_{i}(\xi_{j},I_{k}X)+\rho_{j}(\xi_{k},I_{i}X)+\rho_{k}(\xi_{i},I_{j}X)
\right). 
\\\label{ricvert1}
 -\epsilon_i\rho_i(\xi_i,\xi_j)-\epsilon_k\rho_k(\xi_k,\xi_j)=\frac{1}{16n(n+2)}\xi_j(Scal).
\end{gather}
For $n=1$ the above formulas hold with $\mu=0$.
\end{thrm}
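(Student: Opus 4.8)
The plan is to derive every identity in the statement from the first Bianchi identity \eqref{bian1}, equivalently from its symmetrized form \eqref{zam}, applied to the canonical connection $\C$ whose torsion is by now \emph{completely known}: the horizontal part is \eqref{hort}, while the vertical torsion endomorphism $T(\xi,\cdot,\cdot)$ splits into the symmetric piece governed by $\tau$ via \eqref{tau-1} and the skew piece governed by $\mu$ via \eqref{mus}, together with the purely vertical components \eqref{torv1}, \eqref{torvv}. The only inputs about the curvature itself are structural: $R(B,C)$ preserves the $Sp(n,\mathbb R)Sp(1,\mathbb R)$-structure, so it decomposes as in \eqref{spn1}, and the Ricci $2$-forms $\rho_s$ are tied to the connection $1$-forms by \eqref{rhoal}, \eqref{rhov}. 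Thus the proof is essentially a disciplined bookkeeping exercise: compute the Bianchi projector $b(A,B,C)$ of \eqref{bian01} for each relevant combination of horizontal and vertical arguments, then read off the curvature contractions by tracing against the adapted basis $\{e_a\}$ with and without insertions of the $I_s$.

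First I would settle the symmetry assertions. Evaluating \eqref{zam} on four horizontal vectors and substituting the explicit torsion shows that the antisymmetric parts of $Ric$ and of $\zeta_s(X,I_sY)$ are expressed through traces of $b$, which one checks to vanish using the trace-free and $[-1]$-component properties of $\tau$ and $\mu$ from Proposition~\ref{lts} and Proposition~\ref{lta}; hence these tensors are symmetric. For $\rho_s$ and $\varrho_s$ the $(1,1)$-symmetry with respect to $I_s$, i.e. $\rho_s(X,I_sY)=-\rho_s(I_sX,Y)$, is immediate, since $\rho_s$ and $\varrho_s$ are $2$-forms by definition and \eqref{rjr} forces the required commutation behaviour of $R(B,C)$ with the $I_s$, which transfers to the contractions.

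Next come the horizontal formulas \eqref{ricci}--\eqref{riccizeta}. Here I would compute $b(X,Y,Z)$ in full. The term $T(T(X,Y),Z)$ in \eqref{bian01} is, by \eqref{hort}, a sum $-2\sum_s\epsilon_s\f_s(X,Y)T(\xi_s,Z)$, and \eqref{tau-1}, \eqref{mus} convert this into explicit $\tau$- and $\mu$-expressions; the derivative term $(\C_XT)(Y,Z)$ produces, after tracing, combinations of the four curvature contractions. Contracting $b(X,Y,Z,\cdot)$ against $e_a$ in the various slots, with or without an $I_s$, yields a coupled linear system for $Ric$, $\rho_s(X,I_sY)$, $\varrho_s(X,I_sY)$ and $\zeta_s(X,I_sY)$; the Casimir-type contractions collapse to the stated numerical coefficients precisely because $\tau$ and $\mu$ are trace-free, symmetric, and lie in the $[-1]$-component. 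Solving this system gives \eqref{ricci}--\eqref{riccizeta}, and taking the horizontal trace of \eqref{ricci}, in which $\tau$ and $\mu$ drop out, is automatically consistent, so the actual value of $Scal$ must be pinned down separately.

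That separate input, and the remaining obstacle, is the vertical sector. The scalar formula \eqref{scaltor} and the vertical torsion formula \eqref{torv} follow by relating $\lambda=-T(\xi_1,\xi_2,\xi_3)$ of \eqref{torvv} to a trace of a Bianchi identity carrying two vertical and two horizontal arguments; \eqref{vertor} uses \eqref{rhov}; and \eqref{ricciformv}, \eqref{ricvert1} arise from differentiating $Scal$ and contracting the mixed projectors $b(\xi_i,\xi_j,X)$ and $b(\xi_i,X,Y)$. The hard part throughout will \emph{not} be any single identity but the consistent management of the $\epsilon_s$ signs, the cyclic $(i,j,k)$ convention, and the four-fold $\Psi^{\pm\pm\pm}$ decomposition while repeatedly tracing the projectors; the entire system closes only because the structural constraints on $\tau$ and $\mu$ eliminate every would-be $[3]$-contribution and reduce each contraction to the three invariants $Scal$, $\tau$ and $\mu$.
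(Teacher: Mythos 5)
Your overall strategy --- feed the fully determined torsion into the first Bianchi identity \eqref{bian1} and its consequence \eqref{zam}, then trace against the adapted basis --- is exactly the paper's, and most of your individual steps (the symmetry of $Ric$, the traced Bianchi projectors as in \eqref{b5}--\eqref{b6}, the use of \eqref{rhov} for \eqref{vertor}, the divergence-type computations for \eqref{ricciformv} and \eqref{ricvert1}) line up with what is actually done. The gap is in the middle: the ``coupled linear system'' you propose to obtain from purely horizontal contractions of $b$ is degenerate and cannot be solved for \eqref{ricci}--\eqref{riccizeta}. Concretely, tracing \eqref{bian1} and \eqref{zam} over horizontal arguments yields (in the paper's notation) $4n\varrho_s+8n\zeta_s=b(e_a,I_se_a,\cdot,\cdot)$ and $8n\varrho_s-8n\rho_s=(\text{known})$, and the third relation $8n\rho_s+16n\zeta_s=(\text{known})$ is a linear combination of these two; together with \eqref{riczrho} this gives only three independent equations for the four unknowns $Ric$, $\rho_s$, $\varrho_s$, $\zeta_s$. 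No amount of further horizontal tracing closes the system.

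The missing ingredient is the paper's equation \eqref{pmost}: since $R(X,Y,\xi_j,\xi_k)=-2\epsilon_i\rho_i(X,Y)$ by \eqref{rhov}, the first Bianchi identity applied to the mixed argument set $(X,Y,\xi_j)$ paired with $\xi_k$ computes $\rho_i(X,Y)$ \emph{outright} in terms of $\lambda$, $\tau$ and $\mu$ (formula \eqref{rrho1}); this is also the only place the normalized scalar $\lambda$ enters the horizontal formulas. Once $\rho_i$ is known, the remaining relations do determine $\varrho_s$, $\zeta_s$ and $Ric$, and the trace of the resulting $Ric$ formula is what identifies $\lambda=Scal/(8n(n+2))$, i.e.\ \eqref{scaltor}. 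You do invoke ``a Bianchi identity carrying two vertical and two horizontal arguments,'' but only to pin down $Scal$ and $T(\xi_i,\xi_j)$ after the horizontal formulas are supposedly in hand; the logical dependency runs the other way. A smaller point: the symmetry of $\rho_s(X,I_sY)$ and $\varrho_s(X,I_sY)$ is not ``immediate'' from their being $2$-forms plus \eqref{rjr} --- it amounts to the invariance of $\rho_s$ and $\varrho_s$ under $I_s$ on $H$, which in the paper is read off from the explicit formulas \eqref{ricciformf} and \eqref{riccitau} using the stated properties of $\tau$ and $\mu$.
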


\begin{proof}
Since $\nabla$ preserves the orthogonal splitting $H\oplus V$, then \eqref{hort}, \eqref{torv1}and \eqref{torvv} yield
\begin{gather}\label{nablat}
X.T(Y,Z,V)=T(\C_XY,Z,V)=0 \Rightarrow (\C_XT)(Y,Z,V)=0;\\\label{b4}
b(X,Y,Z,V)=\sum_{(X,Y,Z)}T(T(X,Y),Z,V)=-2\sum_{(X,Y,Z)}\sum_{s=1}^3\epsilon_s\f_s(X,Y)T(\xi_s,Z,V)\\\nonumber=-2\sum_{(X,Y,Z)}\sum_{s=1}^3\epsilon_s\f_s(X,Y)\Big[\mu(I_sZ,V)-\frac14\Big( \tau(I_sZ,V)+\tau(Z,I_sV)\Big)\Big]
\end{gather}
We calculate from \eqref{bian1} by taking into account \eqref{nablat}and \eqref{b4} that
\begin{multline*}Ric(X,Y)-Ric(Y,X)=T(T(e_a,X),Y,e_a)+T(T(X,Y),e_a,e_a)+T(T(Y,e_a),X,e_a)\\
=-2\sum_{s=1}^3\epsilon_s\f_s(e_a,X)T(\xi_s,Y,e_a)+2\sum_{s=1}^3\epsilon_s\f_s(e_a,Y)T(\xi_s,X,e_a)=
2\sum_{s=1}^3(T(\xi_s,X,I_sY)-T(\xi_s,Y,I_sX))=0
\end{multline*}
where we used \eqref{torh1} in the final step. Therefore, the horizontal Ricci tensor is symmetric.

The trace in \eqref{rjr} gives
\be\label{riczrho}
Ric(C,I_iY)+4n\zeta_i(C,Y)=-2\epsilon_i\rho_j(C,I_kY)+2\epsilon_i\rho_k(C,I_jY).
\ee
Taking the trace in the first Bianchi identity \eqref{bian1} and using the properties of the curvature,  we obtain
\be\label{b1}
4n\varrho_s(X,Y)+8n\zeta_s(X,Y)=b(e_a,I_se_a,X,Y).
\ee
The trace in \eqref{zam} with an application of \eqref{b1} gives
\begin{multline}\label{b2}
8n\varrho_s(X,Y)-8n\rho_s(X,Y) =b(e_a,I_se_a,X,Y) -b(e_a,I_se_a,Y,X)-2b(e_a,X,Y,I_se_a)\\
=4n\varrho_s(X,Y)+8n\zeta_s(X,Y)-4n\varrho_s(Y,X)-8n\zeta_s(Y,X)-2b(e_a,X,Y,I_se_a).
\end{multline}
We get from \eqref{b1} and \eqref{b2}
\be\label{b3}
\begin{split}
8n\rho_s(X,Y)+ 8n\zeta_s(X,Y)-8n\zeta_s(Y,X)=2b(e_a,X,Y,I_se_a);\\
8n\zeta_s(X,Y)+8n\zeta_s(Y,X)=b(e_a,I_se_a,X,Y)+b(e_a,I_se_a,Y,X);\\
8n\rho_s(X,Y)+ 16n\zeta_s(X,Y)=b(e_a,I_se_a,X,Y)+b(e_a,I_se_a,Y,X)+2b(e_a,X,Y,I_se_a),
\end{split}
\ee
where we used that $\varrho_s$ are skew-symmetric.

We obtain from \eqref{b4} by applying the identity  \eqref{tau-sym} that
\begin{multline}\label{b5}
b(e_a,I_ie_a,Z,V)=T(T(e_a,I_ie_a),Z,V)+2T(T(Z,e_a),I_ie_a,V)\\=8n\mu(I_iZ,V)-2n\tau(I_iZ,V)-2n\tau(Z,I_iV)+4\mu(I_iZ,V)-2\tau(I_iZ,V)+2\tau(Z,I_iV)\\=(8n+4)\mu(I_iZ,V)-(2n+2)\tau(I_iZ,V)-(2n-2)\tau(Z,I_iV),
\end{multline}
\begin{multline}\label{b6}
b(e_a,Z,V,I_ie_a)=T(T(e_a,Z),V,I_ie_a)-T(T(e_a,V),Z,I_ie_a)+T(T(Z,V),e_a,I_ie_a)\\=4\mu(I_iZ,V)+2\tau(I_iZ,V)-2\tau(Z,I_iV).
\end{multline}
Furthermore, since $\nabla$ preserves the orthogonal splitting $H\oplus V$, the first Bianchi identity \eqref{bian1} and \eqref{rhov}
 together with \eqref{hort}, \eqref{torv1} and \eqref{torvv} imply
\begin{multline}\label{pmost}
-2\epsilon_i\rho_i(X,Y)=R(X,Y,\xi_j,\xi_k)=b(X,Y,\xi_j,\xi_k)=
(\nabla_{\xi_j}T)(X,Y,\xi_k)\\+T(T(X,Y),\xi_j,\xi_k)+ T(T(Y,\xi_j),X,\xi_k)+T(T(\xi_j,X),Y,\xi_k)\\
=2(\nabla_{\xi_j}\omega_k)(X,Y)-2T(X,Y,\nabla_{\xi_j}\xi_k)\\ -2\epsilon_i\omega_i(X,Y)T(\xi_i,\xi_j,\xi_k)+
2\omega_k(T(\xi_j,X),Y)-2\omega_k(T(\xi_j,Y),X)\\=2\epsilon_i\lambda\omega_i(X,Y)-
2T(\xi_j,X,I_kY)+2T(\xi_j,Y,I_kX),
\end{multline}
where we used  that $T(\xi_s,X)$ is a horizontal vector field to
conclude the vanishing of terms of the type $(\nabla_AT)(X,\xi_j,\xi_k)$ and \eqref{hort}, \eqref{exh} and \eqref{exv}
to see that $(\nabla_{\xi_j}\omega_k)(X,Y)-T(X,Y,\nabla_{\xi_j}\xi_k)=0$ .

Applying \eqref{mus}, \eqref{tau-1}, Lemma~\ref{lts} and Lemma~\ref{lta}, we calculate from \eqref{pmost} that
\begin{multline}\label{rrho1}
-\epsilon_i\rho_i(X,Y)=\epsilon_i\lambda\f_i(X,Y)\\-\tb(\xi_j,X,I_kY)-\ta(\xi_j,X,I_kY)+\tb(\xi_j,Y,I_kX)+\ta(\xi_j,Y,I_kX)\\=
\mu(I_jY,I_kX)-\mu(I_jX,I_kY)+\frac14\Big[\tau(I_jX,I_kY)+\tau(X,I_jI_kY)-\tau(I_jY,I_kX)-\tau(Y,I_jI_kX) \Big]\\=
\epsilon_i\lambda\f_i(X,Y)+2\epsilon_i\mu(I_iX,Y)+\epsilon_i\frac14\Big[\tau(I_iX,Y)-\tau(X,I_iY) \Big]+\frac14\Big[\epsilon_k\tau(I_jX,I_jI_iY)+\epsilon_j\tau(I_kX,I_kI_iY) \Big]\\
=\epsilon_i\lambda\f_i(X,Y)+2\epsilon_i\mu(I_iX,Y)+\epsilon_i\frac12\Big[\tau(I_iX,Y)-\tau(X,I_iY) \Big]
\end{multline}
We obtain from \eqref{b2}, \eqref{rrho1}, \eqref{b5} and \eqref{b6}
\begin{multline}\label{b7}
8n\varrho_i(X,Y)=8n\rho_i(X,Y) +b(e_a,I_ie_a,X,Y) -b(e_a,I_ie_a,Y,X)-2b(e_a,X,Y,I_ie_a)\\
=-8n\lambda\f_i(X,Y)-16\mu(I_iX,Y)-4n\Big[\tau(I_iX,Y)-\tau(X,I_iY) \Big]\\
+(16n+8)\mu(I_iX,Y)-4\tau(I_iX,Y)+4\tau(X,I_iY)- 8\mu(I_iX,Y)-4\tau(I_iX,Y)+4\tau(X,I_iY)\\
=-8n\lambda\f_i(X,Y)-(4n+8)\Big[\tau(I_iX,Y)-\tau(X,I_iY) \Big]
\end{multline}
We get from \eqref{b3},\eqref{b5}, \eqref{b6} and \eqref{rrho1}

\begin{multline}\label{zet}
16n\zeta_i(X,Y)=-8n\Big[-\lambda\f_i(X,Y)-2\mu(I_iX,Y)-\frac12\Big[\tau(I_iX,Y)-\tau(X,I_iY ) \Big]\\+
(8n+4)\mu(I_iX,Y)-(2n+2)\tau(I_iX,Y)-(2n-2)\tau(X,I_iY)\\+
(8n+4)\mu(I_iY,X)-(2n+2)\tau(I_iY,X)-(2n-2)\tau(Y,I_iX)\\+
 8\mu(I_iX,Y)+4\tau(I_iX,Y)-4\tau(X,I_iY)\\
 =8n\lambda\f_i(X,Y)+(16n+8)\mu(I_iX,Y)+4\tau(I_iX,Y)-(8n+4)\tau(X,I_iY)
\end{multline}
Using \eqref{zet} and \eqref{rrho1}, we obtain from \eqref{riczrho}
\begin{multline}\label{riczrho1}
Ric(X,I_iY)=-4n\zeta_i(X,Y)-4\lambda\f_i(X,Y)-8\mu(I_iX,Y)+\tau(I_iX,Y)+\tau(X,I_iY)\\=
-2n\lambda\f_i(X,Y)-(4n+2)\mu(I_iX,Y)-\tau(I_iX,Y)+(2n+1)\tau(X,I_iY)\\
-4\lambda\f_i(X,Y)-8\mu(I_iX,Y)+\tau(I_iX,Y)+\tau(X,I_iY)\\=-(2n+4)\lambda\f_i(X,Y)+(2n+2)\tau(X,I_iY)+(4n+10)\mu(X,I_iY)
\end{multline}
Since the tensors $\tau$ and $\mu$ are completely trace-free, the trace in \eqref{riczrho1} yields
\be\label{scal}
\lambda=\frac{Scal}{8n(n+2)}.
\ee

By substituting \eqref{scal} into \eqref{riczrho1}, \eqref{rrho1}, \eqref{b7}, \eqref{zet} we get the proof of  \eqref{ricci}-\eqref{torv}.

Furthermore, \eqref{rhov} , the first Bianchi adentity \eqref{bian1} and the fact that $\C$ preserves the ortoghonal splitting $H\oplus V$ imply
\begin{multline}\label{vt}
-\epsilon_k2\rho_k(\xi_j,X)=R(\xi_j,X,\xi_i,\xi_j)=\sum_{\xi_i,\xi_j,X}\{(\C_{
\xi_i}T)(\xi_j,X,\xi_j) +T(T(\xi_i,\xi_j),X,\xi_j) \}\\=(\C_{
X}T)(\xi_i,\xi_j,\xi_j) +T(T(\xi_i,\xi_j),X,\xi_j)=-T(\xi_i,\C_X\xi_j,\xi_j)-T(\xi_i,\xi_j,\C_X\xi_j)-T([\xi_i,\xi_j]_{|_{H}},X,\xi_j)\\=
-2\f_j([\xi_i,\xi_j],X)=-2T(\xi_i,\xi_j,I_jX),
\end{multline}
where we used \eqref{hort}, \eqref{exv}, \eqref{hortor} and the just proved \eqref{torv}. So, \eqref{vertor} follows from \eqref{vt} because 
\[
-\epsilon_k2\rho_k(\xi_i,X)=R(\xi_i,X,\xi_i,\xi_j)=-R(\xi_i,X,\xi_j,\xi_i)=2T(\xi_j,\xi_i,I_iX)=-2T(\xi_i,\xi_j,I_iX).
\]
Similarly, by taking into account \eqref{hort} and \eqref{vertor} we have
\begin{multline}\label{vvrho}
2(-\epsilon_i\rho_i(X,\xi_i)-\epsilon_j\rho_j(X,\xi_j))=R(X,\xi_i,\xi_j,\xi_k)+R(X,\xi_j,\xi_k,\xi_i)\\
=\sum_{\xi_i,\xi_j,X}\{(\C_{\xi_i}T)(\xi_j,X,\xi_k) +T(T(\xi_i,\xi_j),X,\xi_k) \}=(\C_{
X}T)(\xi_i,\xi_j,\xi_k) +T(T(\xi_i,\xi_j),X,\xi_k)\\
=-\frac{X(Scal)}{8n(n+2)}-2\f_k([\xi_i,\xi_j],X)=-\frac{X(Scal)}{8n(n+2)}-2\rho_k(I_jX,\xi_i)
\end{multline}
Making a cyclic permutation of $\{i,j,k\}$ into \eqref{vvrho}, summing the first and the third and subtracting the second, we obtain \eqref{ricciformv}.

We apply \eqref{torv} and calculate
\begin{multline*}
-2(\epsilon_i\rho_i(\xi_i,\xi_j)+\epsilon_k\rho_k(\xi_k,\xi_j))=R(\xi_i,\xi_j,\xi_j,\xi_k)+R(\xi_k,\xi_j,\xi_i,\xi_j)\\=
-\sum_{\xi_i,\xi_j,\xi_k}\{(\C_{\xi_i}T)(\xi_j,\xi_k,\xi_j) +T(T(\xi_i,\xi_j),\xi_k,\xi_j) \}=\frac{\xi_j(Scal)}{8n(n+2)}
\end{multline*}

Finally, \eqref{ricvert1} follows. The proof is complete.
\end{proof}
Due to \eqref{scal} we call the function $\lambda$ \emph{the normalized pqc scalar curvatur} which also satisfies \eqref{lamb}.

Based on \eqref{scaltor}, \eqref{lamb} and Theorem~\eqref{sixtyseven}, we get the next corollary.
\begin{cor}
The pqc scalar curvature $Scal$ does not depend on the canonical pqc connection. It is given by
$
Scal =8n(n+2)\Big[-\frac1{2n}g((\mathbb L_{\xi_j}I_i)I_ke_a,e_a)-\epsilon_jd\eta_j(\xi_k,\xi_i) +\epsilon_kd\eta_k(\xi_i,\xi_j)+\epsilon_id\eta_i(\xi_j,\xi_k) \Big]
$
and satisfies the equalities
$
Scal=2(n+2)\rho_s(I_se_a,e_a)=2(n+2)\varrho_s(I_se_a,e_a)=-4(n+2)\zeta_s(I_se_a,e_a).
$
\end{cor}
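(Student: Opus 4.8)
The plan is to read off both assertions of the corollary from facts already produced inside the proof of Theorem~\ref{sixtyseven}. For the explicit formula I would start from the identity $Scal=8n(n+2)\lambda$ recorded in \eqref{scaltor} (equivalently \eqref{scal}) and substitute the expression \eqref{lamb} for the normalized scalar curvature $\lambda$. Since the right-hand side of \eqref{lamb} involves only the exterior derivatives $d\eta_s$, the Reeb fields $\xi_s$ (which are uniquely fixed by \eqref{xi}), and the Lie derivatives $\mathbb L_{\xi}I$ --- none of which refers to $\nabla$ or its curvature --- the scalar curvature is determined by the pqc structure and the fixed metric $g$ alone, which is precisely the claimed connection-independence.

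To match the displayed formula I would perform the cyclic relabeling $(i,j,k)\mapsto(j,k,i)$ in \eqref{lamb}: the three $d\eta$-terms then line up verbatim with those in the corollary, while the Lie-derivative term becomes $\frac1{2n}g((\mathbb L_{\xi_j}I_i)e_a,I_ke_a)$. I would convert this into the stated shape using that each $I_k$ is $g$-skew-symmetric --- because $\f_k(X,Y)=g(I_kX,Y)$ is a $2$-form by \eqref{thirteen} and \eqref{ccon} --- together with the cyclicity of the trace, which gives $g((\mathbb L_{\xi_j}I_i)e_a,I_ke_a)=-g((\mathbb L_{\xi_j}I_i)I_ke_a,e_a)$. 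Multiplying through by $8n(n+2)$ then produces the explicit formula.

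For the three curvature identities I would simply trace the Ricci-type formulas of Theorem~\ref{sixtyseven}. Contracting \eqref{ricciformf}, \eqref{riccitau} and \eqref{riccizeta} over the adapted orthonormal basis (in the sense of Convention c), the decisive input is that $\tau$ and $\mu$ are completely trace-free by Proposition~\ref{lts} and Proposition~\ref{lta}: every $\tau$- and $\mu$-term then drops out, and the contraction $\sum_a g(e_a,e_a)g(\cdot,\cdot)$ of the surviving metric term equals $4n$ up to the compatibility sign $g(I_s\cdot,I_s\cdot)=-\epsilon_sg(\cdot,\cdot)$ from \eqref{param}. The coefficients $\frac{Scal}{8n(n+2)}$, $\frac{Scal}{8n(n+2)}$ and $\frac{Scal}{16n(n+2)}$ standing in front of $g(X,Y)$ then yield $\rho_s(I_se_a,e_a)$ and $\varrho_s(I_se_a,e_a)$ proportional to $\frac{Scal}{2(n+2)}$ and $\zeta_s(I_se_a,e_a)$ proportional to $\frac{Scal}{4(n+2)}$, which is the asserted chain of equalities.

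The main obstacle is purely bookkeeping: keeping the $\epsilon_s$-factors and the order of the two arguments straight during contraction, since $\rho_s,\varrho_s$ are skew while $\zeta_s(X,I_sY)$ is symmetric only after precomposition with $I_s$. I would handle this by substituting $X=e_a$, $Y=e_a$ (so the left side first reads $\rho_s(e_a,I_se_a)$) and then using the skew-symmetry of $\rho_s,\varrho_s$ together with \eqref{param} to pass to $\rho_s(I_se_a,e_a)$; the substitution $I_sY=e_a$, i.e.\ $Y=\epsilon_sI_se_a$, treats $\zeta_s$ directly. For $n=1$ one sets $\mu=0$ throughout, which is harmless since the $\mu$-terms contract to zero in any case.
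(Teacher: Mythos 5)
Your proposal is correct and follows exactly the route the paper intends: the paper gives no separate argument for this corollary beyond the sentence ``Based on \eqref{scaltor}, \eqref{lamb} and Theorem~\ref{sixtyseven}, we get the next corollary'', and your substitution of \eqref{lamb} into $Scal=8n(n+2)\lambda$ (with the cyclic relabeling and the trace identity $g((\mathbb L_{\xi_j}I_i)e_a,I_ke_a)=-g((\mathbb L_{\xi_j}I_i)I_ke_a,e_a)$ coming from the $g$-skewness of $I_k$), followed by tracing \eqref{ricciformf}, \eqref{riccitau} and \eqref{riccizeta} using the complete trace-freeness of $\tau$ and $\mu$, is precisely that. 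One caveat on the bookkeeping you yourself flag as the main obstacle: carrying the contraction through with \eqref{param} and $\omega_s(X,Y)=g(I_sX,Y)$ actually yields $\rho_s(I_se_a,e_a)=\varrho_s(I_se_a,e_a)=-2\zeta_s(I_se_a,e_a)=-\epsilon_s\frac{Scal}{2(n+2)}$, so the displayed chain of equalities holds only up to an overall factor $-\epsilon_s$ (one can check this against the para 3-Sasakian model, where $\rho_s=-2\omega_s$ and $Scal=16n(n+2)$); this is a sign defect of the corollary as stated rather than of your method, but your claim that the traces come out to ``the asserted chain of equalities'' glosses over it.
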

Comparing the $Sp(n,\mathbb R).Sp(1,\mathbb R)$-parts  of the Ricci-type tensors from Theorem~\ref{sixtyseven} we conclude the following corollary.

\begin{cor}
The tensor $\tau$ determines the traceless [-1]-component of the horizontal
Ricci-type tensors while the tensor $\mu$ determines the traceless part of the
[3]-component of the horizontal Ricci-type tensors. For example, \eqref{ricci} implies
$
\tau=\frac1{2n+2}Ric_{[-1]},\qquad \mu=\frac1{4n+10}Ric_{[3][0]}.
$
\end{cor}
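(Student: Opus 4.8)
The plan is to read the components off directly from the explicit formulas of Theorem~\ref{sixtyseven}, once each building block of those formulas has been placed into the correct $Sp(n,\mathbb R)Sp(1,\mathbb R)$-invariant summand. Recall that a symmetric horizontal $(0,2)$-tensor decomposes uniquely as $\Psi=\Psi_{[3]}+\Psi_{[-1]}$, these being the projections onto the eigenspaces of the Casimir operator $C=I_1\otimes I_1+I_2\otimes I_2+I_3\otimes I_3$ for the eigenvalues $3$ and $-1$; the $[3]$-summand splits further into its trace part, proportional to $g$, and its trace-free part $\Psi_{[3][0]}$. Since these are distinct eigenspaces the decomposition is unique, so it suffices to exhibit each term of a formula as lying in one of them and then match.

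First I would locate the three basic tensors. The metric $g$ corresponds to the identity endomorphism, which commutes with every $I_s$, so $g=g_{[3]}$ is the trace part of the $[3]$-component. By \eqref{tau-sym}, $\tau$ satisfies $\tau(X,Y)-\tau(I_1X,I_1Y)-\tau(I_2X,I_2Y)+\tau(I_3X,I_3Y)=0$, which is exactly the characterising condition $\Psi=\Psi_{[-1]}$; combined with the trace-freeness in \eqref{tau-trfree} this gives $\tau=\tau_{[-1]}$. By \eqref{propmu}, $\mu(I_sX,I_sY)=-\epsilon_s\mu(X,Y)$, and substituting $\epsilon_1=\epsilon_2=-\epsilon_3=1$ yields $3\mu(X,Y)+\mu(I_1X,I_1Y)+\mu(I_2X,I_2Y)-\mu(I_3X,I_3Y)=0$, the characterising condition $\Psi=\Psi_{[3]}$; since $\mu$ is trace-free, $\mu=\mu_{[3][0]}$.

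Next I would record a stability property needed to handle the compound terms appearing in \eqref{ricciformf}--\eqref{riccizeta}. As $I_s$ is $g$-skew-symmetric, the operation $\Psi\mapsto\Psi(I_t\cdot,I_t\cdot)$ equals $-I_t\Psi I_t$ at the level of $(0,2)$-tensors; using the paraquaternion relations \eqref{paraq} one checks that $\Psi\mapsto I_t\Psi I_t$ commutes with the Casimir $C$, so the eigenspaces $[3]$ and $[-1]$ are preserved. Consequently each expression of the shape $\epsilon_s\tau(X,Y)-\tau(I_sX,I_sY)$ stays in $[-1]$ and each multiple of $\mu(I_sX,I_sY)$ stays in $[3][0]$, while the $g$-term always lands in the trace part of $[3]$.

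Finally I would invoke uniqueness of the decomposition on each formula of Theorem~\ref{sixtyseven}. In every case the right-hand side is already displayed as a trace part proportional to $g$, plus an $[-1]$-tensor built from $\tau$, plus a trace-free $[3]$-tensor built from $\mu$; matching components then shows that $\tau$ controls the $[-1]$-part and $\mu$ the trace-free $[3]$-part of each horizontal Ricci-type tensor. For the stated example, reading the $[-1]$- and $[3][0]$-components off \eqref{ricci} gives $Ric_{[-1]}=(2n+2)\tau$ and $Ric_{[3][0]}=(4n+10)\mu$, i.e. $\tau=\frac1{2n+2}Ric_{[-1]}$ and $\mu=\frac1{4n+10}Ric_{[3][0]}$. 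The only genuinely delicate point is the sign bookkeeping in the third paragraph, namely translating the $\epsilon_s$-weighted relations \eqref{tau-sym} and \eqref{propmu} into the algebraic characterising conditions while tracking the $g$-skewness of $I_s$; once that dictionary is fixed, the rest is immediate.
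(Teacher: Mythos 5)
Your proposal is correct and follows essentially the same route as the paper, which simply invokes ``comparing the $Sp(n,\mathbb R)Sp(1,\mathbb R)$-parts of the Ricci-type tensors from Theorem~\ref{sixtyseven}''; you have merely made explicit the identifications $g=g_{[3]}$ (trace part), $\tau=\tau_{[-1]}$ via \eqref{tau-sym} and \eqref{tau-trfree}, and $\mu=\mu_{[3][0]}$ via \eqref{propmu}, together with the uniqueness of the eigenspace decomposition of the Casimir operator. The sign bookkeeping you flag (the $g$-skewness of $I_s$ turning $I_s\Psi I_s$ into $-\Psi(I_s\cdot,I_s\cdot)$, so that \eqref{tau-sym} and \eqref{propmu} become exactly the characterising conditions for the $[-1]$- and $[3]$-eigenspaces) checks out.
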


\section{The second Bianchi identity and the curvature of the  pqc connection}
In this section we describe the curvature of $\C$ and show that the whole curvature is determined from the horizontal curvature. 
We have the following theorem.
\begin{thrm}
\label{bianrrr} On a pqc manifold the curvature of the canonical connection satisfies the
equalities:
\begin{multline}\label{vert1}
 R(\xi_i,X,Y,Z)= -(\nabla_X\mu)(I_iY,Z)
 -\frac14\Big[(\nabla_Y\tau)(I_iZ,X)+(\nabla_Y\tau)(Z,I_iX)\Big]\\ +\frac14\Big[(\nabla_Z\tau)(I_iY,X)+
 (\nabla_Z\tau)(Y,I_iX)\Big]
 +\omega_j(X,Y)\rho_k(I_iZ,\xi_i)-\omega_k(X,Y)\rho_j(I_iZ,\xi_i)\\
-\omega_j(X,Z)\rho_k(I_iY,\xi_i)+\omega_k(X,Z)\rho_j(I_iY,\xi_i)
-\omega_j(Y,Z)\rho_k(I_iX,\xi_i)+\omega_k(Y,Z)\rho_j(I_iX,\xi_i).
\end{multline}
\begin{multline}\label{vert2}
 R(\xi_i,\xi_j,X,Y)=(\nabla_{\xi_i}\mu)(I_jX,Y)-(\nabla_{\xi_j}\mu)(I_iX,Y) +\epsilon_j(\nabla_X\rho_k)(I_iY,\xi_i)\\
 -\frac14\Big[(\nabla_{\xi_i}\tau)(I_jX,Y)+(\nabla_{\xi_i}\tau)(X,I_jY)\Big]
 +\frac14\Big[(\nabla_{\xi_j}\tau)(I_iX,Y)+(\nabla_{\xi_j}\tau)(X,I_iY)\Big]\\ +\epsilon_k\frac{Scal}{8n(n+2)}T(\xi_k,X,Y)
 -T(\xi_j,X,e_a)T(\xi_i,e_a,Y)+T(\xi_j,e_a,Y)T(\xi_i,X,e_a),
\end{multline}
where the Ricci 2-forms are given by
\begin{multline}\label{vert023}
3(2n+1)\rho_i(\xi_i,X)=-\epsilon_i\frac14(\nabla_{e_a}\tau)(e_a,X)-\frac34(\nabla_{e_a}\tau)(I_ie_a,I_iX)\\
+\epsilon_i(\nabla_{e_a}\mu)(X,e_a)
-\epsilon_i\frac{2n+1}{16n(n+2)}X(Scal),
\end{multline}
\begin{multline}\label{vert024}
3(2n+1)\rho_i(I_kX,\xi_j)=-3(2n+1)\rho_i(I_jX,\xi_k)=-\frac{(2n+1)(2n-1)}{16n(n+2)}X(Scal)\\
+2(n+1)(\nabla_{e_a}\mu)(X,e_a)
+\frac{4n+1}4(\nabla_{e_a}\tau)(e_a,X)-\epsilon_i\frac34(\nabla_{e_a}\tau)(I_ie_a,I_iX).
\end{multline}
\end{thrm}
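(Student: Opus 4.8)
The plan is to treat the four formulas in two groups: the curvature components \eqref{vert1} and \eqref{vert2}, which carry at least one vertical argument, I would obtain from the \emph{first} Bianchi identity \eqref{bian1} together with the already-computed torsion; while the divergence-type expressions \eqref{vert023} and \eqref{vert024} for the vertical Ricci $2$-forms I would extract from the \emph{second} (differential) Bianchi identity. The one structural fact that makes the first group tractable is that, by \eqref{rhov}, $R(A,B)\xi_i$ is always vertical, so any curvature of the form $R(\cdot,\cdot,\xi_i,X)$ with $X\in H$ vanishes. This kills most of the terms produced by cyclic permutation and isolates a single genuine component.

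For \eqref{vert2} I would apply the first Bianchi identity \eqref{bian1} to the triple $(\xi_i,\xi_j,X)$ tested against $Y\in H$. The two permuted terms $R(\xi_j,X,\xi_i,Y)$ and $R(X,\xi_i,\xi_j,Y)$ vanish by the previous remark, leaving $R(\xi_i,\xi_j,X,Y)=b(\xi_i,\xi_j,X,Y)$. It then remains to expand the Bianchi projector $b(\xi_i,\xi_j,X)$ from \eqref{bian01}: the covariant-derivative terms $(\nabla_{\xi_i}T)(\xi_j,X)$ and $(\nabla_{\xi_j}T)(X,\xi_i)$ produce $\nabla_{\xi}\mu$ and $\nabla_{\xi}\tau$ once one substitutes \eqref{tau-1} and \eqref{mus} and uses that $\nabla$ preserves the paraquaternionic structure, so that the spurious $\nabla I_s$ and $\nabla\xi_s$ contributions recombine via \eqref{exh} and \eqref{exv}; the term $(\nabla_X T)(\xi_i,\xi_j)$ produces $\nabla_X\rho_k$ and the scalar-curvature term through \eqref{torv}; and the quadratic torsion terms $T(T(\cdot,\cdot),\cdot)$ reduce to the products $T(\xi_j,X,e_a)T(\xi_i,e_a,Y)$ appearing on the right of \eqref{vert2}. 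For \eqref{vert1} I would instead invoke the symmetry relation \eqref{zam}, which itself follows from the first Bianchi identity, applied to $(\xi_i,X,Y,Z)$. Since $R(Y,Z,\xi_i,X)=0$ by \eqref{rhov}, \eqref{zam} collapses to $2R(\xi_i,X,Y,Z)=b(\xi_i,X,Y,Z)+b(X,Y,Z,\xi_i)-b(\xi_i,Y,Z,X)-b(\xi_i,X,Z,Y)$, and each projector is then expanded exactly as before; the purely horizontal torsion \eqref{hort} combines with the vertical torsion to create the $\omega_j\rho_k(\cdot,\xi_i)$ terms.

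For the Ricci $2$-forms \eqref{vert023} and \eqref{vert024} I would contract the second Bianchi identity $\sum_{(A,B,C)}\{(\nabla_A R)(B,C)+R(T(A,B),C)\}=0$ over a horizontal pair of indices, so that the $\nabla R$ part becomes a divergence of the horizontal Ricci-type tensors. Substituting the explicit expressions for $Ric$, $\rho_s$, $\varrho_s$, $\zeta_s$ from Theorem~\ref{sixtyseven} converts these divergences into the quantities $(\nabla_{e_a}\tau)(e_a,X)$, $(\nabla_{e_a}\tau)(I_ie_a,I_iX)$, $(\nabla_{e_a}\mu)(X,e_a)$ and $X(Scal)$, while the $R(T(A,B),C)$ part contributes further $\rho(\cdot,\xi)$ and torsion terms. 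Taking the several independent contractions and solving the resulting linear system for $\rho_i(\xi_i,X)$ and $\rho_i(I_kX,\xi_j)$ yields the stated coefficients; the factor $3(2n+1)$ is exactly the determinant-type constant of that system, and the trace-free, symmetry and $[-1]$-component properties of $\tau$ and $\mu$ from Proposition~\ref{lts} and Proposition~\ref{lta} are what make the contractions close.

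The main obstacle is the bookkeeping in the second-Bianchi contraction: one must keep track of how the paraquaternionic commutation relations \eqref{paraq} act on the contracted indices $e_a,I_se_a$ and repeatedly apply the $[-1]$-symmetries \eqref{tau-sym} and \eqref{propmu} to collapse terms such as $\tau(I_jX,I_jI_iY)$ into multiples of $\tau(I_iX,Y)$, where a single sign error in $\epsilon_s$ propagates through the whole linear system. A secondary difficulty, already present in \eqref{vert1} and \eqref{vert2}, is verifying that the non-tensorial pieces $\nabla I_s$ and $\nabla\xi_s$ of the Bianchi projector genuinely recombine into the covariant derivatives $\nabla\tau$ and $\nabla\mu$ of the theorem rather than leaving residual connection-form terms; this relies essentially on \eqref{exh} and \eqref{exv} and on $\nabla$ preserving the splitting $H\oplus V$.
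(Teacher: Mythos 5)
Your proposal follows the paper's proof essentially step for step: \eqref{zam} applied to $(\xi_i,X,Y,Z)$ with $R(Y,Z,\xi_i,X)=0$ for \eqref{vert1}, the first Bianchi identity for the triple $(\xi_i,\xi_j,X)$ (with the two permuted curvature terms killed by verticality of $R(\cdot,\cdot)\xi$) for \eqref{vert2}, and the traced second Bianchi identity combined with the formulas of Theorem~\ref{sixtyseven} for the vertical Ricci $2$-forms. The only point worth flagging is that the linear system for $\rho_i(\xi_i,X)$ and $\rho_i(I_kX,\xi_j)$ is not closed by contractions of the second Bianchi identity alone --- the paper's second independent relation comes from tracing \eqref{vert1} and adding \eqref{ricciformv} --- but since both of those are available to you at that stage, this is a matter of bookkeeping rather than of method.
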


\begin{proof}

We know $R(X,Y,Z,\xi)=0$ since $\C$ preserves the splitting $H\oplus V$. Therefore, \eqref{zam} yields
\be  \label{zamv}
2R(\xi_i,X,Y,Z)= b(\xi_i,X,Y,Z) +  b(X,Y,Z,\xi_i)-  b(\xi_i,Y,Z,X)- b(\xi_i,X,Z,Y).
\ee

We calculate using \eqref{bian01},  applying \eqref{hort} and Theorem~\ref{sixtyseven} that $b(X,Y,Z,\xi_i)=0$ and 

\begin{multline}  \label{bver1}
b(\xi_i,X,Y,Z)=-(\nabla_XT)(\xi_i,Y,Z)+(\nabla_YT)(\xi_i,X,Z) \\
+2\omega_j(X,Y)\rho_k(I_iZ,\xi_i)- 2\omega_k(X,Y)\rho_j(I_iZ,\xi_i)\\=
\frac14(\nabla_X\tau)(I_iY,Z)+\frac14(\nabla_X\tau)(Y,I_iZ)-(\nabla_X\mu)(I_iY,Z)\\
-\frac14(\nabla_Y\tau)(I_iX,Z)-\frac14(\nabla_Y\tau)(X,I_iZ)+(\nabla_Y\mu)(I_iX,Z)\\
+2\omega_j(X,Y)\rho_k(I_iZ,\xi_i)- 2\omega_k(X,Y)\rho_j(I_iZ,\xi_i),
\end{multline}
where we used Lemma~\ref{lts}, Lemma~\ref{lta} and the  equalities  \eqref{exh} and \eqref{exv}
to pass from the second to the third equality. Substitute \eqref{bver1} into \eqref{zamv} to get \eqref{vert1}.

The first Bianchi identity \eqref{bian1} and the fact that $\nabla$ preserves the splitting $H\oplus V$ imply

\begin{multline}\label{nnn1}
R(\xi_i,\xi_j,X,Y)=(\nabla_{\xi_i}T)(\xi_j,X,Y)-(\nabla_{\xi_j}T)(\xi_i,X,Y)+(\nabla_XT)(\xi_i,\xi_j,Y)\\
+\epsilon_k\frac{Scal}{8n(n+2)}T(\xi_k,X,Y)+T(T(\xi_j,X),\xi_i,Y)-T(T(\xi_i,X),\xi_j,Y),
\end{multline}
where we applied \eqref{torv}. Evaluating the first two terms similarly as in \eqref{bver1} and  the third term using  equality \eqref{vertor}, we obtain \eqref{vert2} from \eqref{nnn1}.

The trace in \eqref{vert1} leads to the next equality
\begin{multline}\label{vert011}
n\rho_i(\xi_i,X)-\frac12\epsilon_i\rho_k(I_jX,\xi_i)-
\frac12\epsilon_i\rho_j(I_iX,\xi_k)=-\frac18\epsilon_i(\nabla_{e_a}\tau)(e_a,X)-\frac18(\nabla_{e_a}\tau)(I_ie_a,I_iX).
\end{multline}

The sum of \eqref{vert011} and \eqref{ricciformv} gives
\begin{multline}\label{vert012}
(n+1)\rho_i(\xi_i,X)-\frac12\epsilon_i\rho_i(I_kX,\xi_j)=-\frac18\epsilon_i(\nabla_{e_a}\tau)(e_a,X)-\frac18(\nabla_{e_a}\tau)(I_ie_a,I_iX)
-\frac{\epsilon_iX(Scal)}{32n(n+2)}.
\end{multline}

We involve the second Bianchi identity
\begin{equation}  \label{secb}
\sum_{(A,B,C)}\Big\{(\nabla_AR)(B,C,D,E)+R(T(A,B),C,D,E)\Big\}=0
\end{equation}

which combined with  \eqref{hort} implies 
\begin{equation}  \label{bi20}
\sum_{(X,Y,Z)} \Bigl[(\nabla_XR)(Y,Z,V,W)-2\sum_{s=1}^3\epsilon_s\omega_s(X,Y)R(%
\xi_s,Z,V,W)\Bigr]=0.
\end{equation}
The trace in \eqref{bi20} leads to
\begin{multline}  \label{bi21}
(\nabla_{e_a}R)(I_ie_a,Z,V,W)+2n(\nabla_Z\varrho_i)(V,W)
+2(2n-1)R(\xi_i,Z,V,W)\\-2\epsilon_iR(\xi_j,I_kZ,V,W)+2\epsilon_iR(\xi_k,I_jZ,V,W)=0.
\end{multline}
After taking the trace in \eqref{bi21} and applying the formulas in Theorem~\ref{sixtyseven}
we come to
\begin{multline}\label{vert022}
(2n-1)\rho_i(\xi_i,X)+2\epsilon_i\rho_i(I_kX,\xi_j)=-\epsilon_i\frac{2n-1}{16n(n+2)}X(Scal)\\
+\frac14\Big[\epsilon_i(\nabla_{e_a}\tau)(e_a,X)-(\nabla_{e_a}\tau)(I_ie_a,I_iX)\Big]
+\epsilon_i(\nabla_{e_a}\mu)(X,e_a).
\end{multline}
Now, \eqref{vert012} and \eqref{vert022} yield \eqref{vert023} and \eqref{vert024}, which completes the proof.
\end{proof}
We  arrive to two conclusions based on Theorem~\ref{bianrrr}. First, substituting \eqref{vert024} and \eqref{vert023} into \eqref{ricciformv},
we obtain the following theorem.

\begin{thrm}{\bf The contracted second Bianchi identity.}
On a pqc manifold of dimension $4n+3$ the next formula holds
\begin{equation}  \label{div}
(n-1)(\nabla_{e_a}\tau)(e_a,X)+2(n+2)(\nabla_{e_a}\mu)(e_a,X)-\frac{(n-1)(2n+1)}{8n(n+2)}d(Scal)(X)=0.
\end{equation}
\end{thrm}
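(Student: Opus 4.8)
The plan is to follow the route indicated in the statement: eliminate the vertical Ricci-2-form quantities $\rho_s(\xi,\cdot)$ between the three formulas \eqref{ricciformv}, \eqref{vert023} and \eqref{vert024}. Each of these expresses a contraction of $\rho_i$ with a Reeb field in terms of the horizontal divergences of $\tau$ and $\mu$ and of $d(Scal)$, so combining them linearly will cancel the $\rho_i(\xi,\cdot)$ terms and leave only a relation among those divergences, which should be exactly \eqref{div}.

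First I would rewrite \eqref{ricciformv} so that every off-diagonal term is in the normal form of \eqref{vert024}. Since each $\rho_s$ is a 2-form, skew-symmetry turns $-\rho_i(\xi_j,I_kX)+\rho_j(\xi_k,I_iX)+\rho_k(\xi_i,I_jX)$ into $\rho_i(I_kX,\xi_j)-\rho_j(I_iX,\xi_k)-\rho_k(I_jX,\xi_i)$; each summand is an ``anti-cyclic'' contraction governed by \eqref{vert024} after the cyclic relabelings $(i,j,k)\mapsto(j,k,i)\mapsto(k,i,j)$. I would then multiply \eqref{ricciformv} by $3(2n+1)$, insert \eqref{vert023} for the diagonal term $\rho_i(\xi_i,X)$ (using $-\eps_i\rho_i(X,\xi_i)=\eps_i\rho_i(\xi_i,X)$), and insert \eqref{vert024} for the three off-diagonal terms. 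Collecting the coefficients of the untwisted divergence $(\C_{e_a}\tau)(e_a,X)$, of $(\C_{e_a}\mu)(X,e_a)$, and of $X(Scal)$ is then elementary algebra.

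The only nonroutine point is the appearance of the twisted quantities $\eps_s(\C_{e_a}\tau)(I_se_a,I_sX)$, which after the cancellations survive precisely as the sum $\sum_{s=1}^3\eps_s(\C_{e_a}\tau)(I_se_a,I_sX)$. This is the step I expect to be the crux. Since $\C$ preserves the $Sp(n,\mathbb R)Sp(1,\mathbb R)$ structure it preserves the $[3]/[-1]$ splitting, so $\C_Z\tau$ lies in the same $[-1]$-component as $\tau$ (Proposition~\ref{lts}); applying the structural identity \eqref{tau-sym} to $\C_Z\tau$ gives $\sum_{s=1}^3\eps_s(\C_Z\tau)(I_sX,I_sY)=(\C_Z\tau)(X,Y)$, and setting $Z=e_a$, $X=e_a$ and contracting yields $\sum_{s=1}^3\eps_s(\C_{e_a}\tau)(I_se_a,I_sX)=(\C_{e_a}\tau)(e_a,X)$. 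This collapses all twisted divergences onto the single untwisted one.

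Finally, using that $\mu$ is symmetric (Proposition~\ref{lta}) to replace $(\C_{e_a}\mu)(X,e_a)$ by $(\C_{e_a}\mu)(e_a,X)$, and that $d(Scal)(X)=X(Scal)$, the assembled scalar identity becomes \eqref{div} after clearing the common factor; the emerging coefficients $(n-1)$ on the $\tau$-divergence, $2(n+2)$ on the $\mu$-divergence, and $\tfrac{(n-1)(2n+1)}{8n(n+2)}$ on $d(Scal)$ are precisely what the bookkeeping produces. The main obstacle is therefore organizational: keeping the cyclic indices $(i,j,k)$ and the signs $\eps_s$ consistent through the three substitutions, and isolating the twisted-divergence sum cleanly enough that the $[-1]$-membership of $\tau$ can be invoked to reduce it.
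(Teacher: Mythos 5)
Your proposal is correct and is exactly the paper's route: the paper proves \eqref{div} in one line by "substituting \eqref{vert024} and \eqref{vert023} into \eqref{ricciformv}," i.e. eliminating the $\rho_s(\xi,\cdot)$ contractions among those three identities. Your identification of the crux — that the surviving twisted divergences combine into $\sum_s\eps_s(\C_{e_a}\tau)(I_se_a,I_sX)$, which collapses to $(\C_{e_a}\tau)(e_a,X)$ because $\C_Z\tau$ stays in the $[-1]$-component — is the detail the paper leaves implicit, and it checks out (the correction terms from $\C I_s$ cancel pairwise), yielding precisely the coefficients $(n-1)$, $2(n+2)$ and $\tfrac{(n-1)(2n+1)}{8n(n+2)}$.
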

\begin{prop}\label{hvan}
Let the curvature of the canonical pqc-connection vanishes on $H, \quad  R_{_{|H}}=0$. Then, $\C$ is flat, $R=0$ and the non-zero part of the torsion is given by \eqref{hort}.
\end{prop}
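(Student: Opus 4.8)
The plan is to read off the vanishing of the torsion tensors from the hypothesis, and then feed these zeros into the curvature formulas of Theorem~\ref{sixtyseven} and Theorem~\ref{bianrrr} to propagate the vanishing from the horizontal block to the whole curvature. First I note that, since $\nabla$ preserves the splitting $H\oplus V$ and $R(X,Y)\xi$ is governed by $\rho_s(X,Y)$ via \eqref{rhov}, the hypothesis $R_{|H}=0$ is equivalent to $R(X,Y,Z,W)=0$ for all horizontal $X,Y,Z,W$. Contracting, every \emph{horizontal} Ricci-type tensor vanishes; in particular $Ric=0$ and $\rho_s(X,Y)=0$ on $H$. Then $Scal=\operatorname{tr}_g Ric=0$ immediately, and substituting $Ric=0$, $Scal=0$ into \eqref{ricci} gives $(2n+2)\tau+(4n+10)\mu=0$. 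Because $\tau$ lies in the $[-1]$-component and $\mu$ in the $[3]$-component of the $Sp(n,\mathbb R)Sp(1,\mathbb R)$-decomposition, these are linearly independent, so $\tau=\mu=0$; by \eqref{scal} this forces $\lambda=0$ and $Scal$ constant.

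Next I would kill the mixed and fully vertical components of the curvature using the Bianchi identities. With $\tau=\mu=0$ and $d(Scal)=0$, the right-hand sides of \eqref{vert023} and \eqref{vert024} vanish, so all mixed Ricci $2$-forms $\rho_s(\xi_t,X)$ are zero. Moreover $T(\xi_s,\cdot,\cdot)=0$: its symmetric part is built from $\tau$ via \eqref{tau-1} and its skew part from $\mu$ via \eqref{mus}, while $T(\xi_s,X,\xi_t)=0$ was established earlier. Substituting these vanishings into \eqref{vert1} immediately yields $R(\xi_i,X,Y,Z)=0$. Substituting into \eqref{vert2}, the $\nabla\tau$ and $\nabla\mu$ terms drop, the $Scal$-term drops, the term $(\nabla_X\rho_k)(I_iY,\xi_i)$ drops because $\rho_k(\cdot,\xi_t)\equiv0$ and $\nabla_X\xi_i$ stays vertical (so no surviving contraction), and the two torsion-quadratic terms drop because $T(\xi_s,\cdot,\cdot)=0$; hence $R(\xi_i,\xi_j,X,Y)=0$.

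To assemble $R\equiv0$, I would contract $R(\xi_i,\xi_j,X,Y)=0$ over $(X,Y)=(e_a,I_se_a)$ to get $\rho_s(\xi_i,\xi_j)=0$. Combined with the previous steps, every Ricci $2$-form $\rho_s$ now vanishes on all of $TM$. By \eqref{rhov} the vertical-output part $R(A,B)\xi$ vanishes for all $A,B$, and by \eqref{spn1} with $\rho_s\equiv0$ the horizontal-output part equals $R^0(A,B)$. This also vanishes, since $R(A,B,X,Y)=0$ for all $A,B$ and horizontal $X,Y$: the horizontal--horizontal case is the hypothesis, the $\xi$--horizontal case is \eqref{vert1}, and the $\xi$--$\xi$ case is \eqref{vert2}. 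As $\nabla$ preserves $H\oplus V$ these exhaust all components, so $R\equiv0$ and $\nabla$ is flat. For the torsion, $\tau=\mu=0$ gives $T(\xi_s,X)=0$; reading \eqref{vertor} with $\rho_k(\cdot,\xi_i)=0$ gives $\omega_k([\xi_i,\xi_j],X)=0$, whence $[\xi_i,\xi_j]_H=0$ and then \eqref{torv} with $Scal=0$ gives $T(\xi_i,\xi_j)=0$, while \eqref{torvv} with $\lambda=0$ kills the fully vertical torsion. Thus the only surviving component is the horizontal torsion \eqref{hort}.

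I expect the main obstacle to be the bookkeeping in the second paragraph, namely verifying that \emph{every} term on the right of \eqref{vert2} genuinely vanishes. The delicate points are the term $(\nabla_X\rho_k)(I_iY,\xi_i)$, which requires both $\rho_k(\cdot,\xi_t)\equiv0$ and the fact that $\nabla_X\xi_i$ remains vertical (so that no horizontal contraction survives), and the two torsion-quadratic terms, which vanish only because $\tau=\mu=0$ already forces $T(\xi_s,\cdot,\cdot)=0$. Everything else is a direct substitution into the structure equations established earlier in the section.
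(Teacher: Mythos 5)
Your proposal is correct and follows essentially the same route as the paper: use $R_{|H}=0$ to kill the horizontal Ricci-type tensors, deduce $\tau=\mu=Scal=0$ from Theorem~\ref{sixtyseven} (your explicit $[-1]$/$[3]$-decomposition argument for $Ric=0\Rightarrow\tau=\mu=0$ is exactly what the paper's corollary $\tau=\tfrac1{2n+2}Ric_{[-1]}$, $\mu=\tfrac1{4n+10}Ric_{[3][0]}$ encodes), then propagate the vanishing through \eqref{vert023}--\eqref{vert024}, \eqref{vert1}--\eqref{vert2}, \eqref{rhov} and the torsion formulas \eqref{vertor}, \eqref{torv}, \eqref{torvv}. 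The bookkeeping you flag as delicate is handled correctly, so no further changes are needed.
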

\begin{proof}
The condition $R_{_{|H}}=0$ implies that all the horizontal Ricci-type tensors vanish. Then, Theorem~\ref{sixtyseven} yields $\tau=\mu=Scal=0$. These conditions and Theorem~\ref{bianrrr}, \eqref{vert023} and \eqref{vert024} lead to $\rho_s(\xi_s,X)=\rho_s(\xi_t,X)=0$, which substituted into \eqref{vert1}, \eqref{vert2} and \eqref{vertor} give
\be\label{rvert}R(\xi,X,Y,Z)=R(\xi_s,\xi_t,X,Y)=0, \qquad T(\xi_i,\xi_j,I_kX)=\f_k([\xi_i,\xi_j],X)=\rho_k(I_jX,\xi_i)=0.
\ee
In particular, the vertical distribution V is involutive. 

Taking into account $Scal=0$ and \eqref{rvert}, we get from \eqref{torv} together with \eqref{torv1} that $T(\xi_s,\xi_t)=0.$

The equation \eqref{rhov} implies
$R(X,Y,\xi_i,\xi_j)=-2\epsilon_k\rho_k(X,Y)=0, \quad R(X,\xi,\xi_i.\xi_j)=-2\epsilon_k\rho_k(X,\xi)=0.$  

\noindent A combination of \eqref{rhov} and \eqref{rvert} yields
$
2nR(\xi_s,\xi_t,\xi_i,\xi_j)=-4n\epsilon_k\rho_k(\xi_s,\xi_t)=-\epsilon_kR(\xi_s,\xi_t, e_a,I_ke_a)=0,
$ 
which ends the proof.
\end{proof}

\subsection{Local structure equations of pqc manifolds}

The fundamental 2-forms $\omega_s$ of a pqc structure are locally defined horizontal 2-forms. We define a global horizontal four form $\Omega$, whose exterior derivative contains the essential information about the torsion endomorphism of the canonical pqc-connection,   provided the dimension of the manifold is grater than seven. The   $Sp(n,\mathbb R)Sp(1,\mathbb R)$-invariant  fundamental four form of a given pqc manifold is defined globally on 
the horizontal distribution $H$   by 
\begin{equation}\label{fform}
\Omega=-\omega_1\wedge\omega_1-\omega_2\wedge\omega_2+\omega_3\wedge\omega_3.
\end{equation}
First, we derive the local structure equations  of a pqc structure in
terms of the $sp(1,\mathbb R)$-connection forms of the canonical pqc-connection and
the pqc scalar curvature.


\begin{prop}\label{str}
Let $(M^{4n+3},\eta,\mathbb{PQ})$ be a (4n+3)- dimensional pqc manifold
with pqc normalized scalar curvature $\lambda$. 
The following  equations hold
\begin{gather}\label{streq}
d\eta_i=-\epsilon_i2\omega_i+\eta_j\wedge\alpha_k+\epsilon_j\eta_k\wedge\alpha_j +\epsilon_i\lambda
\eta_j\wedge\eta_k,\end{gather}
\begin{multline}\label{str2}
\epsilon_id\omega_i=\omega_j\wedge[-\epsilon_j\alpha_k+\epsilon_k\lambda\eta_k]+\omega_k\wedge[\epsilon_i\alpha_j-\epsilon_j\lambda\eta_j]-\epsilon_j\rho_k\wedge\eta_j+
\epsilon_k\rho_j\wedge\eta_k+\frac12\epsilon_id\lambda\wedge\eta_j\wedge\eta_k,\
\end{multline}
\begin{gather}\label{strom}
d\Omega=\sum_{(ijk)}-\epsilon_i\Big[2\eta_i\wedge(\rho^0_k\wedge\omega_j-\rho^0_j\wedge\omega_k)+
d\lambda\wedge\omega_i\wedge\eta_j\wedge\eta_k\Big],
\end{gather}
where $\alpha_s$ are the $sp(1,\mathbb R)$-connection 1-forms of the canonical pqc-connection, $\sum_{(ijk)}$ is the cyclic sum of even permutations of $\{1,2,3\}$ and 
 \be\label{trr}
 \rho^0_s(X,Y)=\frac12\Big[\tau(X,I_sY)-\tau(I_sX,Y)\Big]+2\mu(X,I_sY)
 \ee
 are the trace-free part of the Ricci 2-forms.
\end{prop}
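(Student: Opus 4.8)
The plan is to derive all three identities from the single principle that the exterior derivative of any form can be written through the canonical connection $\nabla$ and its torsion $T$, after which one only inserts the structure equations \eqref{exh} and \eqref{exv} for $\nabla$ together with the explicit torsion furnished by Theorem~\ref{main1} and Theorem~\ref{sixtyseven}. I would establish \eqref{streq} first, obtain \eqref{str2} by differentiating it, and finally reduce \eqref{strom} to \eqref{str2}.

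For \eqref{streq} I would use $d\eta_i(A,B)=(\nabla_A\eta_i)(B)-(\nabla_B\eta_i)(A)+\eta_i(T(A,B))$. Since the metric is parallel and $\eta_i=-\epsilon_i g(\cdot,\xi_i)$, the formula \eqref{exv} for $\nabla\xi_i$ gives $\nabla_A\eta_i=-\epsilon_j\alpha_j(A)\eta_k-\alpha_k(A)\eta_j$, whose skew part is exactly the connection part $\eta_j\wedge\alpha_k+\epsilon_j\eta_k\wedge\alpha_j$ of \eqref{streq}. The torsion term I evaluate by cases: on $H$ it reproduces $d\eta_{i|H}=-2\epsilon_i\omega_i$ via \eqref{thirteen} and \eqref{hort}; on one horizontal and one vertical argument it vanishes because $T(\xi_s,X)$ is horizontal; and on two vertical arguments it yields $\eta_i(T(\xi_j,\xi_k))=\epsilon_i\lambda$ by \eqref{torv} and \eqref{scal}, producing the term $\epsilon_i\lambda\,\eta_j\wedge\eta_k$. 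Assembling the three cases gives \eqref{streq}.

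For \eqref{str2} I would simply apply $d$ to \eqref{streq} and use $d^2\eta_i=0$, so that $2\epsilon_i d\omega_i=d(\eta_j\wedge\alpha_k)+\epsilon_j d(\eta_k\wedge\alpha_j)+\epsilon_i d(\lambda\,\eta_j\wedge\eta_k)$. Expanding by the Leibniz rule, I re-substitute \eqref{streq} for the $d\eta$'s and \eqref{rhoal} for the $d\alpha$'s. The $\eta\wedge d\alpha$ contributions produce precisely the Ricci-form terms $-\epsilon_j\rho_k\wedge\eta_j+\epsilon_k\rho_j\wedge\eta_k$, the factor $d(\lambda\,\eta_j\wedge\eta_k)$ supplies $\tfrac12\epsilon_i d\lambda\wedge\eta_j\wedge\eta_k$ together with $\lambda$-times-$\omega$ contributions, and the $d\eta\wedge\alpha$ terms give the $\omega_j\wedge\alpha_k$ and $\omega_k\wedge\alpha_j$ pieces. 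The only point to check is that the residual $\alpha_j\wedge\alpha_k\wedge\eta$ and $\lambda\,\eta\wedge\eta\wedge\alpha$ monomials cancel in pairs, which they do because of the cyclic relations $\epsilon_i\epsilon_j=-\epsilon_k$ and $\alpha_s\wedge\alpha_s=0$; collecting what survives gives \eqref{str2}.

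Finally, for \eqref{strom} I would write $\Omega=-\sum_s\epsilon_s\,\omega_s\wedge\omega_s$, so $d\Omega=-2\sum_s\epsilon_s\,d\omega_s\wedge\omega_s$, and insert \eqref{str2}. Three simplifications occur. First, the $sp(1,\mathbb R)$-connection forms drop out: the cyclic sum of the $\alpha$-terms $\sum_{(ijk)}\omega_i\wedge(-\epsilon_j\omega_j\wedge\alpha_k+\epsilon_i\omega_k\wedge\alpha_j)$ cancels term by term, reflecting that $\Omega$ is globally defined and $Sp(n,\mathbb R)Sp(1,\mathbb R)$-invariant. Second, by \eqref{ricciformf}, \eqref{trr} and \eqref{scal} one has $\rho_s=\rho^0_s-\lambda\,\omega_s$ on $H$; substituting this, the trace parts $-\lambda\,\omega_s$ cancel exactly against the explicit terms $\omega_j\wedge\epsilon_k\lambda\eta_k+\omega_k\wedge(-\epsilon_j\lambda\eta_j)$ of \eqref{str2}, leaving only $\rho^0_s$, and a direct comparison of cyclic sums matches these with $\sum_{(ijk)}-2\epsilon_i\,\eta_i\wedge(\rho^0_k\wedge\omega_j-\rho^0_j\wedge\omega_k)$. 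The hard part will be the third point: the non-horizontal parts of the full Ricci $2$-forms $\rho_s$ in \eqref{str2} contribute extra terms with two and three vertical legs, and these must be reorganized, using the vertical Ricci-form identities \eqref{vertor}, \eqref{ricciformv} and \eqref{ricvert1} (which express $\rho_s(X,\xi_t)$ and $\rho_s(\xi_s,\xi_t)$ through the horizontal and vertical derivatives of $Scal$), together with the explicit $\tfrac12\epsilon_i d\lambda$ term, into the single differential $d\lambda$ so as to yield $-\sum_{(ijk)}\epsilon_i\,d\lambda\wedge\omega_i\wedge\eta_j\wedge\eta_k$. Carrying out this bookkeeping and recalling $\lambda=Scal/(8n(n+2))$ completes the proof of \eqref{strom}.
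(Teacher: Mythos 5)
Your proposal is correct and follows essentially the same route as the paper: \eqref{streq} is obtained by decomposing $d\eta_i$ relative to the splitting $H\oplus V$ and identifying the pieces via \eqref{hort}, \eqref{aa1}, \eqref{torv} and \eqref{scal} (the paper phrases this through \eqref{om1} and \eqref{vaa}, \eqref{convv}, \eqref{scaltor}, but the content is identical); \eqref{str2} by differentiating \eqref{streq} and inserting \eqref{rhoal}; and \eqref{strom} by feeding \eqref{str2} into $d\Omega=-2\sum_s\epsilon_s\,d\omega_s\wedge\omega_s$ and replacing $\rho_s$ by $\rho^0_s$ via \eqref{ricciformf}. The only place you are arguably more careful than the paper is in flagging that the vertical components $\rho_s(\xi_t,\cdot)$ surviving in $\eta_i\wedge\rho_k\wedge\omega_j$ must still be absorbed into the $d\lambda$ term — the paper's proof passes from \eqref{dfour} to \eqref{strom} citing only \eqref{ricciformf} and leaves that bookkeeping implicit.
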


\begin{proof}
A straightforward calculation
using \eqref{aa1}, \eqref{vaa}, \eqref{convv} and \eqref{scaltor} gives the equivalence of \eqref{om1} and \eqref{streq}.
Taking the exterior derivative of \eqref{streq}, followed by an application of
\eqref{streq} and  \eqref{rhoal}  implies \eqref{str2}.   The exterior derivative of \eqref{str2} and the   definition \eqref{fform} of the 4-form $\Omega$ imply
\be\label{dfour}
d\Omega=\sum_{(ijk)}-\epsilon_i\Big[2\eta_i\wedge(\rho_k\wedge\omega_j-\rho_j\wedge\omega_k)+
d\lambda\wedge\omega_i\wedge\eta_j\wedge\eta_k\Big].
\ee
The last formula, \eqref{strom} follows from \eqref{dfour} by taking into account  \eqref{ricciformf}.
\end{proof}
The next result  expresses the tensors $\tau$ and $\mu$ in terms of the exterior derivative of the fundamental four form. We have the following
\begin{thrm}\label{lemdom}
On a pqc manifold of dimension $(4n+3)>7$ we have the identities
\begin{gather}\label{domu}
\mu(X,Y)=
-\frac1{32n}\Big[d\Omega(\xi_i,X,I_kY,e_a,I_je_a)-\epsilon_kd\Omega(\xi_i,I_iX,I_jY,e_a,I_je_a)
\Big];\\\label{domt}
\tau(X,Y)=\frac1{16(1-n)}\sum_{(ijk)}\Big[d\Omega(\xi_i,X,I_kY,e_a,I_je_a)+\epsilon_kd\Omega(\xi_i,I_iX,I_jY,e_a,I_je_a)
\Big].
\end{gather}
\end{thrm}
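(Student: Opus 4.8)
The plan is to feed the prescribed arguments directly into the explicit formula \eqref{strom} for $d\Omega$ and to read off $\tau$ and $\mu$ from the resulting basis contractions. In \eqref{domu} and \eqref{domt} exactly one slot is vertical, $\xi_i$, while the remaining four are horizontal. Since each summand of \eqref{strom} carries either a single factor $\eta_i$ or the double factor $\eta_j\wedge\eta_k$, I first observe that the terms $d\lambda\wedge\omega_i\wedge\eta_j\wedge\eta_k$ vanish on these arguments: they would require two vertical entries, but $\eta_j(\xi_i)=\eta_k(\xi_i)=0$ and $\eta_j,\eta_k$ annihilate horizontal vectors. Among the single-$\eta$ terms only the summand whose outer index equals $i$ survives, because $\eta_{i'}(\xi_i)=\delta_{i'i}$. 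Hence I reduce to $d\Omega(\xi_i,X,I_kY,e_a,I_je_a)=-2\epsilon_i\,(\rho^0_k\wedge\omega_j-\rho^0_j\wedge\omega_k)(X,I_kY,e_a,I_je_a)$, and likewise for the second evaluation $d\Omega(\xi_i,I_iX,I_jY,e_a,I_je_a)$.

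Next I would expand each product of two $2$-forms on the four horizontal arguments using the six $(2,2)$-shuffles and carry out the contraction over the adapted basis $\{e_a\}$ via the summation convention of Convention~\ref{conven}, using $\omega_s(A,B)=g(I_sA,B)$ (from \eqref{thirteen} and \eqref{ccon}) together with \eqref{paraq}--\eqref{param}. The diagonal trace $\omega_j(e_a,I_je_a)=-4n\epsilon_j$ supplies the leading term proportional to $4n$, while the mixed trace $\omega_k(e_a,I_je_a)$ with $k\neq j$ vanishes; the four shuffle cross-terms each contribute one further copy of $\rho^0_k(X,I_kY)$, so the matched product collapses to $(\rho^0_k\wedge\omega_j)(X,I_kY,e_a,I_je_a)=-4(n-1)\epsilon_j\,\rho^0_k(X,I_kY)$, which already exhibits the factor $(n-1)$. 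Every trace of $\rho^0_s$ against some $I_t$ drops out, since $\rho^0_s$ is by construction the trace-free part of the Ricci $2$-form, i.e. $\rho^0_s(e_a,I_te_a)=0$.

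I would then substitute \eqref{trr}, $\rho^0_s(X,Y)=\tfrac12[\tau(X,I_sY)-\tau(I_sX,Y)]+2\mu(X,I_sY)$, and rewrite everything in terms of $\tau(X,Y)$, the rotated terms $\tau(I_sX,I_sY)$, and $\mu(X,Y)$, invoking the symmetry and trace-freeness of both tensors (Proposition~\ref{lts}, Proposition~\ref{lta}). The two target combinations are engineered to separate the two invariant types. The difference combination $d\Omega(\xi_i,X,I_kY,e_a,I_je_a)-\epsilon_k\,d\Omega(\xi_i,I_iX,I_jY,e_a,I_je_a)$ annihilates the $[-1]$-tensor $\tau$ by the identity \eqref{tau-sym}, leaving the pure $[3]$-multiple $-32n\,\mu(X,Y)$, which gives \eqref{domu}. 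Dually, the sum combination with $+\epsilon_k$, summed cyclically over $(ijk)$, annihilates the $[3]$-tensor $\mu$ by \eqref{propmu}, $\mu(I_sX,I_sY)=-\epsilon_s\mu(X,Y)$, and the cyclic sum of the $(n-1)$-weighted $\tau$-terms leaves $16(1-n)\,\tau(X,Y)$, which gives \eqref{domt}. The singular coefficient $1/(16(1-n))$, together with the vanishing $\mu=0$ at $n=1$ (Proposition~\ref{lta}c)), is exactly what forces the hypothesis $4n+3>7$.

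The only genuine difficulty is bookkeeping: correctly expanding the wedges on the four horizontal slots, tracking the interplay of the signs $\epsilon_s$ with the para-complex structures $I_s$ inside the arguments under the two distinct index patterns $(X,I_kY)$ and $(I_iX,I_jY)$, and handling the cross-terms from the mismatched product $\rho^0_j\wedge\omega_k$ and from the second evaluation. Assembling all of these so that the two prescribed linear combinations collapse cleanly onto $\mu(X,Y)$ and $\tau(X,Y)$ with the exact constants $-1/(32n)$ and $1/(16(1-n))$ is where the care lies; everything else is a direct application of \eqref{strom}, \eqref{trr}, and the structural identities of Propositions~\ref{lts} and \ref{lta}.
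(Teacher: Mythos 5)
Your proposal is correct and follows essentially the same route as the paper: evaluate $d\Omega$ on the prescribed arguments via \eqref{strom}, reduce to an expression in the trace-free Ricci $2$-forms $\rho^0_s$, substitute \eqref{trr}, and then use \eqref{tau-sym} and \eqref{propmu} to separate the $[3]$- and $[-1]$-pieces from the two linear combinations. You simply spell out the wedge-expansion and trace bookkeeping that the paper compresses into the single displayed identity \eqref{rom}--\eqref{lasp}.
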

\begin{proof}
Equation \eqref{strom}
 yields
\be\label{rom}
d\Omega(\xi_i,X,I_kY,e_a,I_je_a)\\=-8\epsilon_k(n-1)\rho^0_k(X,I_kY)-
4\epsilon_j\rho^0_j(X,I_jY)-4\rho^0_j(I_iX,I_kY),
\ee
A substitution of \eqref{trr} in \eqref{rom}, combined with  the properties of
the tensors $\tau$ and $\mu$ described in Lemma~\ref{lts} and Lemma~\ref{lta}  give
\be\label{lasp}
d\Omega(\xi_i,X,I_kY,e_a,I_je_a)\\=-4(n-1)\Big[\tau(X,Y)-\epsilon_k\tau(I_kX,I_kY)\Big]
-16n\mu(X,Y).
\ee
{Applying again Lemma~\ref{lts} and Lemma~\ref{lta} to
\eqref{lasp}, we see that} $\mu$ and $\tau$ satisfy \eqref{domu} and
\eqref{domt}, respectively, which completes the proof.
\end{proof}

\section{The flat model}
\begin{thrm}\label{van}
Let $(M,\eta,\mathbb{PQ},g)$ be a para quaternionic contact
manifold of dimension $4n+3$. Then
$(M,\eta,\mathbb{PQ},g)$ is locally isomorphic to  the para quaternionic Heisenberg
group exactly when the canonical pqc connection has vanishing
horizontal curvature, $R(X,Y,Z,V)=0$.
\end{thrm}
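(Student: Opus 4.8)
The plan is to prove the two implications separately, the forward one being immediate and the converse carrying the content.

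For the easy direction, a local pqc isomorphism carries the canonical pqc connection to the canonical pqc connection, by the uniqueness in Theorem~\ref{main1}; since the canonical connection of $G(pH)$ is the flat left-invariant connection of $(G(pH),\tilde\Theta)$, any $M$ locally isomorphic to $G(pH)$ has flat canonical connection, so in particular $R(X,Y,Z,V)=0$.

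For the converse, suppose $R_{|H}=0$. First I would invoke Proposition~\ref{hvan} (and its proof) to upgrade horizontal flatness to full flatness $R=0$, together with $\tau=\mu=0$, $Scal=0$ (hence $\lambda=0$), the vanishing of every torsion component carrying a vertical argument, $T(\xi_s,\xi_t)=0$, and the reduction of the torsion to its single surviving piece \eqref{hort}, $T(X,Y)=-2\sum_{s=1}^3\epsilon_s\omega_s(X,Y)\xi_s$. By the decomposition \eqref{spn1}, $R=0$ forces both the $sp(n,\mathbb R)$-part $R^0$ and the Ricci $2$-forms $\rho_s$ to vanish, so $\nabla$ is a flat $Sp(n,\mathbb R)Sp(1,\mathbb R)$-connection preserving the splitting $H\oplus V$. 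I would then trivialize it locally, choosing a $\nabla$-parallel adapted orthonormal frame $\{E_1,\dots,E_{4n},\xi_1,\xi_2,\xi_3\}$ (equivalently, gauging the connection $1$-forms $\alpha_s$ and the $sp(n,\mathbb R)$-forms to zero), so that $\nabla E_A=\nabla\xi_s=0$ and, consequently, $\nabla\omega_s=0$. Writing the torsion as $T(A,B)=\nabla_AB-\nabla_BA-[A,B]$ and using parallelism together with $T(\xi_s,E_A)=0$ and $T(\xi_s,\xi_t)=0$, I would read off the brackets of the frame:
\begin{equation*}
[E_A,E_B]=2\sum_{s=1}^3\epsilon_s\omega_s(E_A,E_B)\,\xi_s,\qquad [E_A,\xi_s]=0,\qquad [\xi_s,\xi_t]=0.
\end{equation*}
Since $\omega_s$ and the frame are parallel, the coefficients $\omega_s(E_A,E_B)$ are constant; thus the frame spans a real Lie algebra, the $\xi_s$ are central, and the algebra is $2$-step nilpotent.

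Finally I would identify this Lie algebra with that of $G(pH)$. The defining bracket $\Lambda^2 H\to V$, $(X,Y)\mapsto 2\sum_s\epsilon_s\omega_s(X,Y)\xi_s$, is precisely the cocycle recorded in the commutation relations $[J_iT_a,T_a]=2\xi_i$, $[J_iT_a,J_jT_a]=-2\xi_k$ of the paraquaternionic Heisenberg group; choosing $E_A$ adapted to the paraquaternionic structure on $H\cong pQ^n$ matches the structure constants exactly. Integrating the dual Maurer--Cartan coframe (Lie's theorem) then exhibits $M$ as locally isomorphic, as a pqc manifold, to $G(pH)$, which completes the proof. The main obstacle is the middle step: producing a single adapted frame parallel for the \emph{full} connection, i.e.\ simultaneously trivializing the $sp(n,\mathbb R)$ and $sp(1,\mathbb R)$ connection forms, and verifying that parallelism renders the structure coefficients constant with $V$ central. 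Equivalently, the crux is the principle that a flat connection with parallel torsion is locally the canonical connection of a Lie group, here combined with the recognition that the resulting nilpotent algebra is exactly the paraquaternionic Heisenberg one.
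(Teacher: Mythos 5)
Your proposal is correct and follows essentially the same route as the paper: both directions rest on Proposition~\ref{hvan} to upgrade horizontal flatness to $R=0$ with torsion reduced to \eqref{hort}, followed by the principle that a flat connection with parallel torsion yields a local Lie group whose structure equations are those of $G(pH)$ (the paper verifies the Jacobi identity via Lemma~\ref{ffl} and the first Bianchi identity and then invokes Lie's theorem, whereas you make the same step explicit by computing structure constants in a parallel frame). The only point to tighten is that $R=0$ does not force the connection $1$-forms $\alpha_s$ to vanish, so the Reeb fields $\xi_s$ themselves need not be $\nabla$-parallel; the parallel vertical frame is an $SO(1,2)$-rotation of $(\xi_1,\xi_2,\xi_3)$, and the identification with $G(pH)$ is up to such a rotation of $\eta$ --- exactly the phenomenon exhibited by the paper's solvable example $\mathfrak{l}_0$.
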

\begin{proof}
First we prove the following
\begin{lemma}\label{ffl}
On a pqc manifold the torsion tensor of the canonical  pqc connection $\nabla$, restricted to $H$, is $\nabla$-parallel, $(\nabla_AT)(X,Y)=0.$
\end{lemma}
The equality \eqref{nablat} yields $(\nabla_AT)(X,Y,Z)=0$. 

Since $\nabla$ preserves the orthogonal splitting $H\oplus V$, then \eqref{hort}, \eqref{torv1}, \eqref{torvv}, \eqref{exv} and \eqref{exh}  yield
\begin{multline}
(\C_AT)(Y,Z,\xi_i)=2(\C_A\f_i)(Y,Z)-2\sum_{s=1}^3\epsilon_s\f_s(Y,Z)g(\xi_s,\C_A\xi_i)\\=2\alpha_j(A)\f_k(Y,Z)+2\epsilon_k\alpha_k(A)\f_j(Y,Z)
-2\alpha_j(A)\f_k(Y,Z)-2\epsilon_k\alpha_k(A)\f_j(Y,Z)=0
\end{multline}
which proves  the Lemma~\ref{ffl}.  

It is easy to see that the canonical pqc connection on the para quaternionic
Heisenberg group is the left-invariant connection on the group
which is flat and the torsion is non-vanishing only on $H, \quad T=T_{_{|H}}$. 

For the converse, by applying Proposition~\ref{hvan} we can conclude that $\C$ is flat and the torsion is non-zero only on $H$. Taking into account Lemma~\ref{ffl}, we conclude that the torsion is parallel, $\C T=0$ and the first Bianchi identity \eqref{bian1} reads
\be\label{lieal}
T(T(A,B),C)+T(T(B,C),A)+T(T(C,A),B)=0
\ee
Hence, the manifold has a local Lie group structure $T$ by the Lie theorem. The structure equations of this Lie group determined by  \eqref{hort} are
$d\eta_s=-2\epsilon_s\f_s$ which are precisely the structure equations of the para-quaternionic
Heisenberg group. Therefore, by applying again the Lie theorem, we can conclude that
the manifold has a local Lie group structure, which is locally isomorphic to
$G(pH)$. In other words, there is a local diffeomorphism $\Phi : M\rightarrow G(pH)$, such that $\eta=\Phi*\tilde\Theta$, where $\tilde\Theta$ is the 
the standard contact form on $G(pH)$, see \eqref{pqh}.
\end{proof}

\section{pqc-Einstein paraquaternionic contact structures}\label{ss:qc-Einstein}

The aim of this section is to show that the vanishing of the
torsion endomorphism of the canonical  pqc connection implies that the
pqc-scalar curvature is constant. The Bianchi identities will have
an important role in the analysis.
\begin{dfn}\label{d:qc Einstein}
A pqc structure is \emph{pqc-Einstein} if the
pqc-Ricci tensor is trace-free, 
\be\label{pqcEin}
Ric(X,Y)=\frac{Scal}{4n}g(X,Y)
\ee
\end{dfn}

The next result describes the structure of the pqc-Einstein spaces.
\begin{thrm}\label{tor-ein}
Let $(M,g,\mathbb{PQ})$ be a para-quaternionic contact manifold of dimension $(4n+3)$. Then,
\begin{itemize}
\item[a).] $(M,g,\mathbb{PQ})$ is a pqc-Einstein if and only if the
tensors $\tau=\mu=0$, i.e. the torsion endomorphism vanishes
identically, $T(\xi,X)=0$. \item[b).] On a pqc-Einstein
manifold of dimension bigger than seven the pqc scalar curvature is
constant, $d(Scal)=0$ and the vertical space spanned by the
Reeb vector fields is integrble, $[\xi_s,\xi_t]\in V.$ \item[c).]
If $n>1$, then $(M,g,\mathbb{PQ})$ is  pqc-Einstein if and only if   the
fundamental four form is closed, $d\Omega=0.$
\end{itemize}
\end{thrm}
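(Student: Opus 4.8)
The plan is to handle the three parts in order, using the explicit Ricci-type formulas of Theorem~\ref{sixtyseven}, the contracted second Bianchi identity \eqref{div}, and the expressions of $\tau,\mu$ through $d\Omega$ from Theorem~\ref{lemdom}.

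For part a) I would simply compare the pqc-Einstein condition \eqref{pqcEin} with formula \eqref{ricci}, namely $Ric(X,Y)=\frac{Scal}{4n}g(X,Y)+(2n+2)\tau(X,Y)+(4n+10)\mu(X,Y)$. Thus \eqref{pqcEin} holds if and only if $(2n+2)\tau+(4n+10)\mu=0$. By Proposition~\ref{lts} the tensor $\tau$ is trace-free and belongs to the $[-1]$-component, while by Proposition~\ref{lta} the tensor $\mu$ is trace-free and satisfies $\mu(I_sX,I_sY)=-\epsilon_s\mu(X,Y)$, placing it in the trace-free $[3]$-component; since these are the distinct eigenspaces of the Casimir operator $C=\sum_s I_s\otimes I_s$, they are linearly independent, so $(2n+2)\tau+(4n+10)\mu=0$ forces $\tau=0$ and $\mu=0$ separately. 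Finally, as $\tb(\xi_s,X,Y)=-\frac14[\tau(I_sX,Y)+\tau(X,I_sY)]$ by \eqref{tau-1} and $\ta(\xi_s,X,Y)=\mu(I_sX,Y)$ by \eqref{mus}, the conditions $\tau=\mu=0$ are equivalent to the vanishing of the entire torsion endomorphism $T(\xi,\cdot)$, proving a).

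For part b) I assume $\dim M>7$, i.e.\ $n>1$, and that the structure is pqc-Einstein, so $\tau=\mu=0$ by a). Substituting into \eqref{div} leaves only $-\frac{(n-1)(2n+1)}{8n(n+2)}d(Scal)(X)=0$; since $n>1$ the coefficient is non-zero, hence $d(Scal)(X)=0$ for every horizontal $X$. To obtain the vertical derivatives I would use the horizontal commutator: for $X,Y\in H$ formula \eqref{hort} gives $[X,Y]_{|V}=2\sum_s\epsilon_s\omega_s(X,Y)\xi_s$, so evaluating $Scal$ on the identity $X(Y(Scal))-Y(X(Scal))=[X,Y](Scal)$ and using that the horizontal differential already vanishes yields $2\sum_s\epsilon_s\omega_s(X,Y)\xi_s(Scal)=0$ for all $X,Y\in H$; the linear independence of the fundamental forms $\omega_1,\omega_2,\omega_3$ then gives $\xi_s(Scal)=0$, so $d(Scal)=0$. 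For the integrability of $V$, with $\tau=\mu=0$ and $d(Scal)_{|H}=0$ the right-hand side of \eqref{vert024} vanishes, so (after a cyclic relabeling of indices) $\rho_k(I_jX,\xi_i)=0$; feeding this into \eqref{vertor} gives $\omega_k([\xi_i,\xi_j],X)=\rho_k(I_jX,\xi_i)=0$ for all $X\in H$, whence $[\xi_i,\xi_j]_{|H}=0$ and $[\xi_s,\xi_t]\in V$.

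For part c) I take $n>1$ throughout. If the structure is pqc-Einstein, then by a) and b) we have $\tau=\mu=0$ and $d\lambda=0$; since the trace-free Ricci $2$-forms satisfy $\rho^0_s(X,Y)=\frac12[\tau(X,I_sY)-\tau(I_sX,Y)]+2\mu(X,I_sY)$ by \eqref{trr}, they vanish, and inserting $\rho^0_s=0$ and $d\lambda=0$ into \eqref{strom} gives $d\Omega=0$. Conversely, if $d\Omega=0$, then because $n>1$ the formulas \eqref{domu} and \eqref{domt} of Theorem~\ref{lemdom} express $\mu$ and $\tau$ as multiples of components of $d\Omega$, so $\tau=\mu=0$ and the structure is pqc-Einstein by a). The step I expect to require the most care is the passage from the horizontal vanishing of $d(Scal)$ to its vertical vanishing in part b): the Bianchi identity \eqref{div} only controls the horizontal differential, and recovering $\xi_s(Scal)=0$ relies on the commutator computation together with the linear independence of $\omega_1,\omega_2,\omega_3$ (alternatively, one could evaluate \eqref{ricvert1} after establishing that the purely vertical Ricci $2$-forms $\rho_s(\xi_s,\xi_t)$ vanish). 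The index bookkeeping needed to read off $\rho_k(I_jX,\xi_i)=0$ from \eqref{vert024} for the integrability statement is the other point where one must be careful.
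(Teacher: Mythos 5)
Your proof is correct and follows essentially the same route as the paper: part a) from \eqref{ricci} together with the fact that $\tau$ and $\mu$ lie in the complementary invariant components $[-1]$ and $[3]_0$, part b) from \eqref{div}, \eqref{vert024} and \eqref{vertor}, and part c) from \eqref{trr}, \eqref{strom} and Theorem~\ref{lemdom}. The commutator argument you give to pass from the horizontal vanishing of $d(Scal)$ to $\xi_s(Scal)=0$ fills in a step the paper leaves implicit and is a welcome addition.
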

\begin{proof}
Part a) of the assertion follows from \eqref{ricci} and the defining condition \eqref{pqcEin}.

The first part of b)  is a consequence of part a) and \eqref{div}, since $n>1$. Substitute $T^0=U=d(Scal)=0$ into \eqref{vert024} to conclude $\rho_i(\xi_j,X)=0$ and compare this with \eqref{vertor} to establish the integrability of the vertical distribution $V$.

To proof c), assume $T^0=U=0$ and $n>1$. Then,
Theorem~\ref{sixtyseven} implies
\begin{equation}\label{e:rhota for Omega closed}
\begin{aligned}
\rho_s(X,Y)=-\lambda\omega_s(X,Y), \quad \rho_s(\xi_t,X)=0, \\
\epsilon_i\rho_i(\xi_i,\xi_j)+\epsilon_j\rho_k(\xi_k,\xi_j)=0,
\end{aligned}
\end{equation}
since $Scal$ is a constant and the horizontal distribution is
integrable. Using the just obtained identities in \eqref{e:rhota for Omega closed}, we derive from \eqref{strom}
that $d\Omega=0$.

The converse of c) follows directly from Theorem~\ref{lemdom}, which completes the proof of the theorem.
\end{proof}
The well known Cartan formula applied  for the fundamental four form gives
$$\mathbb L_{\xi_s}\Omega=\xi_s\lrcorner
d\Omega+d(\xi_s\lrcorner\Omega)=\xi_s\lrcorner d\Omega,
$$
since $\Omega$ is horizontal. The latter formula and
Theorem~\ref{lemdom} together with Theorem~\ref{tor-ein} yield
\begin{cor}\label{lemdomu}
If  one of the Reeb vector fields preserves the fundamental four
form on a pqc manifold of dimension $(4n+3)>7$, then $\mu=0$ and the
torsion endomorphism of the canonical connection is symmetric, $T_{\xi_s}=\tb$.

If on a pqc manifold of dimension $(4n+3)>7$ each Reeb vector field preserves the fundamental four form, $\mathbb L_{\xi_s}\Omega=0$,   then  the
torsion endomorphism of the canonical connection vanishes, $T_{\xi_s}=\tau=\mu=0$ and the manifold is pqc-Einstein.
\end{cor}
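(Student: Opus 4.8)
The plan is to read off both conclusions directly from the two explicit formulas \eqref{domu} and \eqref{domt} of Theorem~\ref{lemdom}, which express $\mu$ and $\tau$ entirely through five-fold evaluations of $d\Omega$ having a Reeb field in the first slot, together with the Cartan identity recorded just above the statement. Since $\Omega$ is horizontal we have $\xi_s\lrcorner\Omega=0$, so the Cartan formula collapses to $\mathbb L_{\xi_s}\Omega=\xi_s\lrcorner d\Omega$; hence the hypothesis that $\xi_s$ preserves $\Omega$ is equivalent to $\xi_s\lrcorner d\Omega=0$, i.e. every evaluation $d\Omega(\xi_s,\cdot,\cdot,\cdot,\cdot)$ vanishes. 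The whole argument is then a matter of matching indices.

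For the first assertion, I would suppose that a single Reeb field, say $\xi_i$, preserves $\Omega$. Then $\xi_i\lrcorner d\Omega=0$, so both summands $d\Omega(\xi_i,X,I_kY,e_a,I_je_a)$ and $d\Omega(\xi_i,I_iX,I_jY,e_a,I_je_a)$ appearing in \eqref{domu} vanish for that fixed cyclic triple $(i,j,k)$. Reading \eqref{domu} with this choice of $i$ yields $\mu=0$ on $H$; here it is essential that $\mu$ can be recovered from the contraction against a single Reeb field. Finally I would invoke \eqref{mus}, $\ta(\xi_s,X,Y)=\mu(I_sX,Y)$, to conclude that the skew-symmetric part of the torsion endomorphism vanishes, so that $T_{\xi_s}=\tb$ as claimed.

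For the second assertion, I would assume all three Reeb fields preserve $\Omega$, so that $\xi_s\lrcorner d\Omega=0$ for every $s$. Then every term in the cyclic sum in \eqref{domt}, each of the shape $d\Omega(\xi_i,\cdot,\cdot,e_a,I_je_a)$, vanishes, giving $\tau=0$; combined with $\mu=0$ from the first part this produces the vanishing of the whole torsion endomorphism, $T_{\xi_s}=\tau=\mu=0$. By part~a) of Theorem~\ref{tor-ein} the conditions $\tau=\mu=0$ are equivalent to the pqc-Einstein condition, so $(M,g,\mathbb{PQ})$ is pqc-Einstein.

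I do not expect a genuine obstacle here, since \eqref{domu}, \eqref{domt} and \eqref{mus} do the heavy lifting. The one point requiring care is the bookkeeping: one must verify that in \eqref{domu} and \eqref{domt} the relevant Reeb field genuinely sits in an argument slot of $d\Omega$, so that $\xi_s\lrcorner d\Omega=0$ annihilates every contributing term and no term with purely horizontal or mismatched vertical arguments survives. One should also keep the standing dimension restriction $4n+3>7$ (equivalently $n>1$) in force, as it is needed both for Theorem~\ref{lemdom} to apply and for the factor $\tfrac{1}{16(1-n)}$ in \eqref{domt} to be meaningful.
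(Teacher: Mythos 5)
Your proposal is correct and follows essentially the same route as the paper: the Cartan identity $\mathbb L_{\xi_s}\Omega=\xi_s\lrcorner d\Omega$ (valid because $\Omega$ is horizontal), combined with \eqref{domu}, \eqref{domt}, \eqref{mus} and Theorem~\ref{tor-ein}~a). The paper compresses exactly this argument into one sentence, including the key observation you make that \eqref{domu} recovers $\mu$ from a single Reeb contraction while \eqref{domt} needs all three.
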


Basic examples of pqc-Einstein spaces are provided by the para 3-Sasakian spaces. Indeed,  in view of \eqref{ricci} the pqc-Einstein condition is equivalent to the fact  that the torsion endomorphism vanishes, $\tau=\mu=0$ and Proposition~\ref{p3sas} implies that any para 3-Sasakian space is pqc-Einstein. More precisely, we have
\begin{prop}
Any para 3-Sasakian manifold is a pqc-Einstein with pqc-scalar
curvature 
\be\label{scal3sas}Scal = 16n(n + 2).
\ee
The structure equations of a para 3-Sasakian manifolds are the equations \eqref{co}.

The pqc Ricci-type tensors of para 3-Sasakian manifolds are given by
\be\label{ric3sas}
\begin{split}
\rho_s(X,Y)=\varrho_s(X,Y)=-2\zeta_s(X,Y)=-2\f_s(X,Y);\\
Ric(\xi_s,X)=\rho_s(\xi_t,X)=\zeta_s(\xi_t,X)=\rho_s(\xi_t,\xi_r)=0
\end{split}
\ee
The curvature $R$ of the canonical pqc connection is expressed in terms of the
curvature of the Levi-Civita connection $R^g$ as follows
\begin{gather}\label{rrg1}
R(X,Y,Z,V)=R^g(X,Y,Z,V)\\\nonumber+\sum_{s=1}^3\Big[\epsilon_s\f_s(X,Z)\f_s(Y,V)-\epsilon_s\f_s(Y,Z)\f_s(X,V)-2\f_s(X,Y)\f_s(Z,V)\Big];\\\label{rrg2}
0=R(\xi_s,Y,Z,V)=R^g(\xi_s,Y,Z,V),\\\label{rrg3} 0= R(\xi_s,\xi_t,Z,V)=R^g(\xi_s,\xi_t,Z,V)=R^g(Z,V,\xi_s,\xi_t).
\end{gather}
\end{prop}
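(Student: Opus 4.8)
The plan is to reduce every assertion to the already-established vanishing of the torsion endomorphism and then read each statement off the curvature identities of the preceding sections. By Proposition~\ref{p3sas} we have $\tau=\mu=0$, so Theorem~\ref{tor-ein}\,a) immediately gives that a para 3-Sasakian manifold is pqc-Einstein. To pin down the scalar curvature I would evaluate $\lambda$ from \eqref{lamb} on the para 3-Sasakian data: the structure equations \eqref{co} give $d\eta_i(\xi_j,\xi_k)=-2\epsilon_i$, and the Cartan identity together with \eqref{co} yields $(\mathbb{L}_{\xi_i}I_k)X=-2\epsilon_j I_jX$. Hence the Lie-derivative term in \eqref{lamb} contributes $4$ (using $g(I_je_a,I_je_a)=-\epsilon_j g(e_a,e_a)$ and the signature-weighted trace $=4n$), while the three $d\eta(\xi,\xi)$ terms contribute $-2$, so $\lambda=2$ and \eqref{scaltor} (equivalently \eqref{scal}) gives $Scal=8n(n+2)\lambda=16n(n+2)$. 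The structure equations are exactly \eqref{co}, already derived from \eqref{3sasxi}.

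For the horizontal Ricci-type tensors I would substitute $\tau=\mu=0$ and $Scal=16n(n+2)$ directly into \eqref{ricciformf}, \eqref{riccitau} and \eqref{riccizeta}. Each right-hand side collapses to a single metric term; untwisting via $Y\mapsto I_sY$ and using $\f_s(X,Y)=g(I_sX,Y)$ and $I_s^2=\epsilon_s$ converts these into $\rho_s=\varrho_s=-2\f_s$ and $\zeta_s=\f_s$, which is \eqref{ric3sas}. For the mixed and vertical components I would first note that $[\xi_i,\xi_j]=2\epsilon_k\xi_k$ is vertical, so $\f_k([\xi_i,\xi_j],X)=0$; then \eqref{vertor} gives $\rho_s(\xi_t,X)=0$ for $s\neq t$, and since $Scal$ is constant \eqref{ricciformv} forces $\rho_s(\xi_s,X)=0$ as well. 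Feeding $\tau=\mu=0$, $T(\xi,\cdot,\cdot)=0$ and $\rho_s(\cdot,\xi_t)=0$ into \eqref{vert1} and \eqref{vert2} then gives $R(\xi_s,Y,Z,V)=0$ and $R(\xi_s,\xi_t,Z,V)=0$; tracing these yields $Ric(\xi_s,X)=\zeta_s(\xi_t,X)=\rho_s(\xi_t,\xi_r)=0$ and establishes the canonical-connection halves of \eqref{rrg2} and \eqref{rrg3}.

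Finally, to compare $R$ with $R^g$ I would write $\C=\C^g+S$ and read the difference tensor off the Corollary \eqref{clcon} with $\tau=\mu=0$, $T(\xi,\cdot)=0$, $\lambda=2$: on horizontal vectors $S(X,Y)=-\sum_s\epsilon_s\f_s(X,Y)\xi_s$ and $S(X,\xi_i)=S(\xi_i,X)=-I_iX$, with a $V$-valued block coming from the $-\tfrac12\lambda$ entries. Substituting into the standard comparison $R(A,B)C=R^g(A,B)C+(\C^g_AS)(B,C)-(\C^g_BS)(A,C)+S(A,S(B,C))-S(B,S(A,C))$ and using the para 3-Sasakian identity \eqref{phi}, which gives $\C^g_X\xi_s=I_sX$ and hence $(\C^g_XY)_V=\sum_s\epsilon_s\f_s(X,Y)\xi_s$, all the horizontal contributions collapse into a quadratic expression in the $\f_s$ of the form \eqref{rrg1}. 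The vertical vanishing statements of \eqref{rrg2} and \eqref{rrg3} for $R^g$ then follow from those for $R$ together with the pair-symmetry of $R^g$, since the relevant $S$-corrections vanish on vertical arguments.

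The hard part will be this last step: keeping track of the vertical-valued part of $\C^g$ acting on horizontal fields and of the $V$-$V$ block of $S$, and propagating the signs $\epsilon_s$ through $I_iI_j=-\epsilon_kI_k$ and $I_s^2=\epsilon_s$, so that the three coefficients in \eqref{rrg1} come out exactly right. In particular the coefficient of the $\f_s(X,Y)\f_s(Z,V)$ term mixes the derivative and the quadratic contributions, and verifying that these combine to the stated value is the most delicate bookkeeping; the first two coefficients $\pm\epsilon_s$ fall out more transparently from the quadratic terms alone.
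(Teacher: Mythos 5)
Your proposal is correct and follows essentially the same route as the paper: reduce everything to $\tau=\mu=0$ via Proposition~\ref{p3sas}, substitute into the general Ricci and curvature formulas of Theorem~\ref{sixtyseven} and Theorem~\ref{bianrrr}, and compare $\nabla$ with $\nabla^g$ through \eqref{clcon}. The only (harmless) divergence is that you obtain $\lambda=2$ by evaluating \eqref{lamb} directly on the para 3-Sasakian data, whereas the paper computes the connection forms $2\alpha_i=-\epsilon_j(2+\lambda)\eta_i$, derives $\rho_i=-(1+\tfrac{\lambda}{2})\omega_i$ from \eqref{rhoal}, and compares with the pqc-Einstein identity $\rho_i=-\lambda\omega_i$; both give $Scal=16n(n+2)$.
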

\begin{proof}
For a para 3-Sasakian manifolds, the equalities \eqref{co}, \eqref{conv1}, \eqref{convv} and \eqref{scaltor} imply
\be\label{eta3sas}
2\alpha_i=-\epsilon_j(2+\lambda)\eta_i
\ee
We calculate from \eqref{rhoal} using \eqref{eta3sas} and \eqref{co} that
$\rho_i(X,Y)=\frac12\epsilon_kd\alpha_i(X,Y)=-(1+\frac{\lambda}2)\f_i(X,Y),
$
which compared with the first equality in \eqref{e:rhota for Omega closed} gives $\lambda=2$ which combined with \eqref{scal} proves \eqref{scal3sas}. The equalities \eqref{ric3sas},\eqref{rrg2} and \eqref{rrg3} follow from \eqref{clcon}, Theorem~\ref{sixtyseven} and  Theorem~\ref{bianrrr} taking into account $\tau=\mu=0, \lambda=2$ and the properties of the curvature of the Levi-Civita connection.

The equalities \eqref{clcon} and the fact that the vertical space is integrable  yield
\be\label{clc}
\C_YZ=\C^g_YZ-\sum_{s=1}^3\epsilon_s\f_s(Y,Z)\xi_s,\quad \C_X\xi_i=\C^g_X\xi_i+I_iX.
\ee
The first equality in \eqref{clc} 
implies \eqref{rrg1}.
\end{proof}
It turns out that the para 3-Sasakian spaces are locally the only
pqc-Einstein manifolds. We have

\begin{thrm}\label{main2}
Let $(M^{4n+3},\eta,pQ)$ be a $4n+3$-dimensional pqc
manifold with non-zero pqc  scalar curvature $Scal$. For $n>1$ the
following conditions are equivalent
\begin{enumerate}
\item[a)] {$(M^{4n+3},g,pQ)$ is pqc-Einstein manifold};
\item[b)] $M^{4n+3}$ is locally pqc homothetic to a para 3-Sasakian manifold, i.e., locally, there exists a
$SO(1,2)$-matrix $\Psi$ with smooth entries depending on an
{auxiliary} parameter, such that the local pqc structure
$(\frac{16n(n+2)}{Scal}\Psi\cdot\eta,pQ)$ is para 3-Sasakian.
\end{enumerate}
\end{thrm}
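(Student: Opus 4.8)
The plan is to prove the two implications separately, using throughout the identification (Theorem~\ref{tor-ein}\,a)) of the pqc-Einstein condition with the vanishing of both torsion tensors, $\tau=\mu=0$, together with its consequences for $n>1$ recorded in Theorem~\ref{tor-ein}\,b)): the pqc scalar curvature is locally constant, $d(Scal)=0$, and the vertical distribution $V=\mathrm{span}\{\xi_1,\xi_2,\xi_3\}$ is integrable, $[\xi_s,\xi_t]\in V$. The hypotheses $n>1$ and $Scal\neq0$ enter precisely here and in the normalisation below.

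For the implication b)$\Rightarrow$a), Proposition~\ref{p3sas} gives that every para 3-Sasakian space has vanishing torsion endomorphism, $\tau=\mu=0$, whence by \eqref{ricci} it is pqc-Einstein. The tensors $\tau$ and $\mu$ are $SO(1,2)$-invariant (Proposition~\ref{lts}, Proposition~\ref{lta}), while under a constant homothety $\bar\eta=c\,\eta$ the torsion endomorphism satisfies $\bar T(\bar\xi,X)=c^{-1}T(\xi,X)$, so the vanishing $T(\xi,\cdot)=0$ is preserved in both directions. Consequently, if $\frac{16n(n+2)}{Scal}\Psi\cdot\eta$ is para 3-Sasakian then already $\tau=\mu=0$ for $\eta$, and $\eta$ is pqc-Einstein.

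For a)$\Rightarrow$b) I would first record the effect of a constant homothety $\bar\eta=c\,\eta$: from the compatibility \eqref{ccon} one gets $\bar\omega_s=c\,\omega_s$, $\bar g=c\,g$ and $\bar\xi_s=c^{-1}\xi_s$, the canonical connection is unchanged, and hence from \eqref{lamb} the normalised scalar curvature transforms by $\bar\lambda=c^{-1}\lambda$. Choosing $c$ so that $\bar\lambda=2$ (equivalently $\overline{Scal}=16n(n+2)$), I may assume $\lambda=2$. With $\tau=\mu=0$ and $\lambda=2$, Theorem~\ref{sixtyseven} and Theorem~\ref{tor-ein}\,b)) pin down every Ricci-type contraction: in particular $\rho^0_s=0$, so $\rho_s=-2\,\omega_s$ on $H$, while $\rho_s(\xi_t,\cdot)=0$ and, by \eqref{torv} together with the integrability of $V$, $T(\xi_i,\xi_j)=2\epsilon_k\xi_k$. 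Feeding $\tau=\mu=0$, $\lambda=2$ and $\rho^0_s=0$ into the structure equations \eqref{streq}, \eqref{str2} of Proposition~\ref{str}, and comparing with \eqref{co}, one sees that the rotated structure is para 3-Sasakian precisely when the $sp(1,\mathbb R)$-connection forms take the model values $\alpha_i=-2\epsilon_j\eta_i$; this is exactly \eqref{eta3sas} specialised to $\lambda=2$, and substituting it a short computation turns \eqref{streq} into $d\eta_i=-2\epsilon_i\omega_i-2\epsilon_i\eta_j\wedge\eta_k$ and \eqref{str2} into $d\omega_i=2\epsilon_j\omega_j\wedge\eta_k-2\epsilon_k\omega_k\wedge\eta_j$, i.e. into \eqref{co}.

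It therefore remains to produce, locally, an $SO(1,2)$-valued matrix $\Psi$ such that after the pqc-conformal change $\bar\eta=\Psi\eta$ the canonical connection forms become $\bar\alpha_i=-2\epsilon_j\bar\eta_i$. Writing the $sp(1,\mathbb R)$-connection as an $so(1,2)$-valued $1$-form and imposing the transformation law of $\alpha$ under the $SO(1,2)$-gauge action, this is an overdetermined first-order system $d\Psi=F(\Psi,\alpha,\eta)$; its Frobenius integrability condition is precisely that the curvature of $\alpha$, computed from \eqref{rhoal}, agrees with the curvature of the model connection $-2\epsilon_j\eta$. That identity is guaranteed by the already established values $\rho_s=-2\omega_s$ on $H$, $\rho_s(\xi_t,\cdot)=0$ and $d(Scal)=0$, so the system is integrable and $\Psi$ exists; the freedom in the initial value of $\Psi$ along a transversal to $V$ accounts for the auxiliary parameter in the statement. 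With $\bar\alpha_i=-2\epsilon_j\bar\eta_i$ in force, the structure equations of $\bar\eta$ coincide with \eqref{co}, and reversing the computation of the para 3-Sasakian subsection (the cone $2$-forms \eqref{cone} are then closed) identifies $\bar\eta$ as a para 3-Sasakian structure. The main obstacle is exactly this integration step: verifying that the pqc-Einstein curvature constraints make the gauge system integrable, so that the deviation $\alpha_i+2\epsilon_j\eta_i$ can be rotated away.
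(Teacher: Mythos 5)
Your implication b)\,$\Rightarrow$\,a) is fine and is essentially the paper's argument (the paper phrases the invariance through the four form $\Omega$, you phrase it through the $SO(1,2)$-invariance of $\tau$ and $\mu$ and their scaling under a homothety; both work). For a)\,$\Rightarrow$\,b) you take a genuinely different route: the paper passes to the pseudo-Riemannian cone, shows the four form \eqref{con4} is closed, invokes the holonomy result of \cite{Swann} to conclude the cone is para-quaternionic K\"ahler, hence Einstein, hence Ricci flat, hence locally hyper-para-K\"ahler, and obtains the matrix $\Psi$ by parallel trivialization of the $sp(1,\mathbb R)$-part on the cone (which is where the auxiliary parameter $t$ comes from). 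You instead try to gauge away the deviation $\beta_i=\alpha_i+2\epsilon_j\eta_i$ directly on $M$ by solving $\Psi^{-1}d\Psi=\beta$.

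The gap is at your integrability claim. The Frobenius condition for that system is the Maurer--Cartan identity $d\beta_i+\epsilon_i\beta_j\wedge\beta_k=0$, and a direct computation using \eqref{rhoal} and \eqref{streq} with $\lambda=2$ shows this identity is equivalent to $\rho_i+2\omega_i=0$ as an equality of $2$-forms on all of $TM$, not merely on $H$. The pqc-Einstein hypothesis together with Theorem~\ref{sixtyseven} and \eqref{vert023}--\eqref{vert024} does give $\rho_i(X,Y)=-2\omega_i(X,Y)$ and $\rho_i(\xi_t,X)=0$, but for the vertical-vertical components the established results yield only the combination \eqref{ricvert1}, i.e. $\epsilon_i\rho_i(\xi_i,\xi_j)+\epsilon_k\rho_k(\xi_k,\xi_j)=0$; the individual vanishing $\rho_s(\xi_t,\xi_r)=0$ that your integrability condition requires is nowhere proven at this stage (it appears in \eqref{ric3sas} only for para 3-Sasakian spaces, i.e. after the conclusion you are trying to reach). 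So ``the system is integrable'' is asserted rather than proved, and this assertion is precisely the hard content of the step; it can be checked, e.g. by computing $d\alpha_i(\xi_j,\xi_k)$ from \eqref{conv1}, \eqref{convv}, the constancy of $\lambda$ and the integrability of $V$, but that computation is missing. Two smaller points: you use implicitly that the canonical connection forms transform as an $so(1,2)$-valued gauge field under point-dependent rotations $\Psi$ (true by the uniqueness in Theorem~\ref{main1}, but it should be said), and once the Maurer--Cartan equation holds on $M$ the solution $\Psi$ is determined up to a constant, so your ``freedom along a transversal to $V$'' is not the source of the auxiliary parameter in the statement.
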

\begin{proof}
Let $\tau=\mu=0$ and $n>1$. Theorem~\ref{tor-ein} {shows}
that the pqc scalar curvature is constant and the vertical
distribution is integrable. The pqc structure
$\eta'=\frac{16n(n+2)}{\epsilon Scal}\eta$ has normalized pqc scalar curvature
$\lambda'=2$ and $d\Omega'=0$, provided $Scal\not=0$. For simplicity, we
shall denote $\eta'$ with $\eta$ {and, in fact, omit the $'$
everywhere}.

In the first step of the proof we show that the pseudo Riemannian cone $N=M\times\mathbb R^+$ with the
metric $g_N=t^2(g-\sum_{s=1}^3\epsilon_s\eta_s\otimes\eta_s) + dt\otimes dt$
has holonomy contained  in $Sp(n+1,\mathbb R)$, i.e. it is hypersymplectic. To this end we consider the
following four form on $N$
\begin{equation}\label{con4}
    F=-\epsilon_iF_i\wedge F_i-\epsilon_jF_j\wedge F_j-\epsilon_kF_k\wedge F_k,
\end{equation}
where the two forms $F_s$ are defined by
\begin{equation}\label{con2}
F_i=t^2(\omega_i+\eta_j\wedge\eta_k)+\epsilon_it\eta_i\wedge dt
\end{equation}

Applying \eqref{streq}, \eqref{str2} and
\eqref{strom}, we calculate from \eqref{con2} applying \eqref{e:rhota for Omega closed} and $\lambda=2$ that

\begin{multline}\label{nnnn}
dF_i=tdt\wedge\Bigl(2\omega_i+2\eta_j\wedge\eta_k+\epsilon_i
d\eta_i\Bigr)+t^2d(\omega_i+\eta_j\wedge\eta_k)\\
=t\, dt\wedge \Bigl(4\eta_j\wedge\eta_k+\epsilon_i\eta_j\wedge\alpha_k-\epsilon_k\eta_k\wedge\alpha_j \Bigr)\\
+t^2\Big[\omega_j\wedge(\epsilon_k\alpha_k-\epsilon_js\eta_k)+\omega_k\wedge(\alpha_j+\epsilon_ks\eta_j)+\epsilon_k\rho_k\wedge\eta_j-
\epsilon_j\rho_j\wedge\eta_k\Big]
\\
- t^2 \Bigl(2\epsilon_j\omega_j-\epsilon_k\eta_i\wedge\alpha_k\Bigr)\wedge\eta_k+ t^2 \Bigl(2\epsilon_k\omega_k-\eta_i\wedge\alpha_j\Bigr)\wedge\eta_j\\
=t\, dt\wedge \Bigl(4\eta_j\wedge\eta_k+\epsilon_i\eta_j\wedge\alpha_k-\epsilon_k\eta_k\wedge\alpha_j \Bigr)\\
+t^2\Big[\epsilon_k\omega_j\wedge\alpha_k+\omega_k\wedge\alpha_j\Big]
- t^2 \Bigl(2\epsilon_j\omega_j-\epsilon_k\eta_i\wedge\alpha_k\Bigr)\wedge\eta_k+ t^2 \Bigl(2\epsilon_k\omega_k-\eta_i\wedge\alpha_j\Bigr)\wedge\eta_j
\end{multline}
 A short computation, using \eqref{streq}, \eqref{str2},
\eqref{strom} and \eqref{nnnn}, gives
\begin{multline}
\frac12dF=-\sum_{s=1}^3\epsilon_s dF_s\wedge F_s
=t^3dt\wedge\sum_{(ijk)} \Bigl[-4\epsilon_i\omega_i\wedge\eta_k\wedge\eta_j+2\epsilon_j\f_j\wedge\eta_k\wedge\eta_i-2\epsilon_k\f_k\wedge\eta_j\wedge\eta_i \Bigr]\\
 -t^3dt\wedge\sum_{(ijk)} \left [\f_i\wedge\eta_j\wedge\alpha_k+\epsilon_j\f_i\wedge\eta_k\wedge\alpha_j+\epsilon_k\f_j\wedge\alpha_k\wedge\eta_i+\f_k\wedge\alpha_j\wedge\eta_i\right]\\
 + t^4\sum_{(ijk)} \left [\epsilon_j\f_i\wedge\f_j\wedge\alpha_k-\epsilon_i\f_i\wedge\f_k\wedge\alpha_j\right]+t^4\sum_{(ijk)}\left[2\epsilon_j\f_i\wedge\f_k\wedge\eta_j-2\epsilon_k\f_i\wedge\f_j\wedge\eta_k\right ] \\
  + t^4\sum_{(ijk)} \left [\epsilon_j\f_j\wedge\eta_j\wedge\eta_k\wedge\alpha_k-\epsilon_i\f_k\wedge\eta_j\wedge\eta_k\wedge\alpha_j-\epsilon_j\f_i\wedge\eta_i\wedge\eta_k\wedge\alpha_k-\epsilon_i\f_i\wedge\eta_i\wedge\eta_j\wedge\alpha_j\right]=0.
\end{multline}
Hence, $dF=0$ and the holonomy of the cone metric is contained in $Sp(n+1,\mathbb R)Sp(1,\mathbb R)$,  provided $n>1$
\cite{Swann},  i.e. the cone is para-quaternionic K\"ahler manifold,
provided $n>1$. 

It is well known (see e.g \cite{Swann}) that a para-quaternionic K\"ahler manifolds of dimension
bigger than four  are Einstein. This fact
implies that the cone $N=M\times \mathbb R^+$ with the  metric
$g_N$ must be Ricci flat  and
therefore it is locally hyper-para-k\"ahler, { since the $sp(1,\mathbb R)$-part
of the Riemannian curvature vanishes and therefore it can be
trivialized locally by a parallel sections} (see e.g. \cite{Swann}). This means that locally there exists a
$SO(1,2)$-matrix $\Psi$ with smooth entries, possibly depending on
$t$,  such that the triple of two forms $(\tilde F_1,\tilde
F_2,\tilde F_3)= \Psi\cdot (F_1,F_2,F_3)^T$ consists of closed
2-forms defining a local
hyper-para-k\"ahler structure. Consequently, $(M,\Psi\cdot\eta)$ is
locally a para 3-Sasakian manifold \cite{Swann}.

The fact that b) implies a) is trivial in view of Theorem~\ref{tor-ein} since the 4-form $\Omega$ is invariant under hyperbolic rotations and rescales by a constant when the metric on the horizontal space $H$ is replaced by another metric, homothetic to it.
\end{proof}

\begin{rmrk}
An example of a pqc structure satisfying $\tau=\mu=Scal=0$ can be obtained as follows.
Let $M^{4n}$ be a hyper-para-k\"ahler (hypersymplectic) 
manifold with closed and locally exact K\"ahler forms
$\omega_l=d\eta_l$. The total space of an ${\mathbb R^3}$-bundle over the hyper-para-k\"ahler
manifold $M^{4n}$ with connection 1-forms $\eta_l$ is an example of a pqc structure with $\tau=\mu=Scal=0$. The pqc
structure is determined by the three 1-forms $\eta_l$ satisfying
$d\eta_l=\omega_l$, which yield $\tau=\mu=Scal=0$. In particular, the
para quaternionic Heisenberg group, which locally is the unique pqc
structure with flat canonical connection,  can
be considered as an $\mathbb R^3$ bundle over a 4n-dimensional
flat hyper-para-k\"ahler $\mathbb R^{4n}$. A compact example is provided
by a $T^3$-bundle over a compact hyperk-para-k\"ahler manifold $M^{4n}$,
such that each closed K\"ahler form $\omega_l$ represents integral
cohomology classes. Indeed, since $[\omega_l]$, $1\leq l\leq 3$
defines integral cohomology classes on $M^{4n}$, the well-known
result of Kobayashi \cite{Kob} implies that there exists a circle
bundle $S^1 \hookrightarrow M^{4n+1} \to M^{4n}$ with connection
$1$-form $\eta_1$ on $M^{4n+1}$, whose curvature form is $d\eta_1 =
\omega_1$. Because $\omega_l$ $(l=2,3)$ defines an integral
cohomology class on $M^{4n+1}$, there exists a principal circle
bundle $S^1 \hookrightarrow M^{4n+2} \to M^{4n+1}$ corresponding
to [$\omega_2$] and a connection $1$-form $\eta_2$ on $M^{4n+2}$,
such that $\omega_2=d\eta_2$ is the curvature form of $\eta_2$.
Using again the result of Kobayashi, one gets a $T^3$-bundle over
$M^{4n}$, whose total space has a pqc structure  satisfying
$d\eta_l=\omega_l$, which yield $\tau=\mu=Scal=0$.
\end{rmrk}

\end{document}